\newcommand*\dashline{\rotatebox[origin=c]{90}{$\dabar@\dabar@\dabar@\dabar@$}}
\newcommand{\mm}{\mathfrak m}
\newcommand{\lsf}{\mathsf{l}}
\newcommand{\nsf}{\mathsf{n}}
\newcommand{\rsf}{\mathsf{r}}
\newcommand{\tsf}{\mathsf{t}}
\newcommand{\Vsf}{\mathsf{V}}
\newcommand{\vsf}{\mathsf{v}}
\newcommand{\xsf}{\mathsf{x}}
\newcommand{\Z}{\mathbb{Z}}
\newcommand{\N}{\mathbb{N}}
\newcommand{\Pbb}{\mathbb{P}}
\newcommand{\Fc}{\mathcal{F}}
\newcommand{\Jc}{\mathcal{J}}
\newcommand{\Ncc}{\mathcal{N}}
\newcommand{\Sc}{\mathcal{S}}
\newcommand\bsa{{\boldsymbol a}}
\newcommand\bsb{{\boldsymbol b}}
\newcommand\bsc{{\boldsymbol c}}
\newcommand\bsx{{\boldsymbol x}}
\newcommand\bsy{{\boldsymbol y}}
\newcommand\bsz{{\boldsymbol z}}
\DeclareMathOperator{\pnt}{\raise 0.5mm \hbox{\large\bf.}}
\DeclareMathOperator{\GL}{GL}
\DeclareMathOperator{\chara}{char}
\DeclareMathOperator{\Tor}{Tor}
\DeclareMathOperator{\rank}{rank}
\DeclareMathOperator{\reg}{reg}
\def\+#1{\relax\ifmmode\if\noexpand #1\relax \mathop{\kern
    0pt^+{#1}}\nolimits\else \kern 0pt^+\!#1 \fi\else$^*$#1\fi}
\let\phi=\varphi
\numberwithin{equation}{section}
\newtheorem{thm}{\bf Theorem}[section]
\newtheorem{lem}[thm]{\bf Lemma}
\newtheorem{prop}[thm]{\bf Proposition}
\theoremstyle{definition}
\newtheorem{defn}[thm]{\bf Definition}
\theoremstyle{plain}
\newtheorem*{thm*}{Theorem}
\newtheorem*{lem*}{Lemma}
\newtheorem*{cor*}{Corollary}
\newtheorem*{claim*}{Claim}
\newtheorem*{defn*}{Definition}
\theoremstyle{remark}
\newtheorem{rem}[thm]{Remark}
\newtheorem{constr}[thm]{Construction}
\newtheorem{ex}[thm]{Example}
\title[Matrices of linear forms]{Koszul determinantal rings and $2\times e$ matrices of linear forms}
\author[H. D. Nguyen]{Hop D. Nguyen}
\address{Dipartimento di Matematica, Universit\`a di Genova, Via Dodecaneso 35, 16146 Genoa, Italy}
\address{Ernst-Abbe-Platz 5, Appartment 605, 07743 Jena, Germany}
\date{\today}
\email{ngdhop@gmail.com}
\author[P. D. Thieu]{Phong Dinh Thieu}
\address{Universit\"at Osnabr\"uck, Institut f\"ur Mathematik, 49069 Osnabr\"uck, Germany}
\address{Department of Mathematics, Vinh University, 182 Le Duan, Vinh City, Vietnam} 
\email{thieudinhphong@gmail.com}
\author[T. Vu]{Thanh Vu}
\address{Department of Mathematics, University of California at Berkeley, Berkeley CA 94720}
\email{vqthanh@math.berkeley.edu}
\thanks{The first named author is grateful to the support of the Vigoni project (in 2011) and the CARIGE foundation.}
\subjclass[2010]{13D02, 13C40}
\keywords{Koszul algebras, determinantal ring, rational normal scrolls, Kronecker-Weierstrass normal form.}
\begin{document}

\begin{abstract}
Let $k$ be an algebraically closed field of characteristic $0$. Let $X$ be a $2\times e$ matrix of linear forms over a polynomial ring $k[\mathsf{x}_1, \ldots,\mathsf{x}_n]$ (where $e,n\ge 1$). We prove that the determinantal ring $R = k[\mathsf{x}_1,\ldots,\mathsf{x}_n]/I_2(X)$ is Koszul if and only if in any Kronecker-Weierstrass normal form of $X$, the largest length of a nilpotent block is at most twice the smallest length of a scroll block. As an application, we classify rational normal scrolls whose all section rings by natural coordinates are Koszul. This result settles a conjecture due to Conca.
\end{abstract}

\maketitle
\section{Introduction}
\label{intro}
Let $k$ be an algebraically closed field of characteristic $0$, $R$ a commutative, standard graded $k$-algebra. The last condition means that $R$ is $\Z$-graded, $R_0=k$ and $R$ is generated as a $k$-algebra by finitely many elements of degree $1$. We say that $R$ is a {\em Koszul algebra} if $k$ has linear resolution as an $R$-module. Denote by $\reg_R M$ the Castelnuovo-Mumford regularity of a finitely generated graded $R$-module $M$. An equivalent way to express the Koszulness of $R$ is the condition $\reg_R k=0$. Effective techniques to prove Koszulness include Gr\"obner deformation, Koszul filtrations, computation of the Betti numbers of $k$ for toric rings, among others. For some survey articles on Koszul algebras, we refer to \cite{CNR}, \cite{Fr2}. 

In this paper, we study the Koszul property of linear sections of rational normal scrolls. By abuse of terminology, we use ``rational normal scrolls" to refer to the homogeneous coordinate rings of the corresponding varieties. These graded algebras are defined by the ideals of 2-minors of some $2\times e$ matrices of linear forms, where $e\ge 1$. The homogeneous coordinate rings of the Segre embedding $\Pbb^1\times \Pbb^e\to \Pbb^{2e+1}$ and the Veronese embedding $\Pbb^1\to \Pbb^e$ are among the examples; in fact they are special instances of rational normal scrolls. The rational normal scrolls are a classical and widely studied class of varieties with minimal multiplicity, whose classification is known from works of Del Pezzo and Bertini; see \cite{EH}. 

Let $X$ be a $2\times e$ matrix of linear forms over a polynomial ring $S = k[\xsf_1, \ldots,\xsf_n]$. Let $R = k[\xsf_1,\ldots,\xsf_n]/I_2(X)$ be the determinantal ring of $X$. Algebraic properties of such determinantal rings $R$ were studied in the literature, see \cite{Ch}, \cite{Ca} and \cite{ZZ}. The Kronecker-Weierstrass theory of matrix pencils (see Section \ref{background}) played an important role in these works.

Concerning the Koszul property, any rational normal scroll is Koszul since it has regularity $1$. In fact, any rational normal scroll is also G-quadratic, namely its defining ideal has a quadratic Gr\"obner basis with respect to a suitable term order; see \cite{ZZ} for a generalization. In this paper, we are able to classify Koszul determinantal rings of $2\times e$ matrices of linear forms using the Kronecker-Weierstrass theory. The main technical result of the paper is:
\begin{thm}\label{main}
Let $X$ be a $2\times e$ matrix of linear forms (where $e\ge 1$) and $R=k[X]/I_2(X)$ the determinantal ring of $X$. Then $R$ is Koszul if and only if $m\le 2n$, where $m$ is length of the longest nilpotent block and $n$ is length of the shortest scroll block in any Kronecker-Weierstrass normal form of $X$. (The last condition holds if there is either no such nilpotent block or no such scroll block.)
\end{thm}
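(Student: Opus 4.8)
The plan is to first invoke the Kronecker--Weierstrass theory of Section \ref{background} to replace $X$ by a normal form that is a direct sum of scroll blocks, of lengths $n_1\le\cdots\le n_p$, and nilpotent blocks, of lengths $m_1\le\cdots\le m_q$, so that $n=n_1$ and $m=m_q$. Since $I_2(X)$ is generated by the $2$-minors of a matrix of linear forms, $R$ is defined by quadrics, and the content of the theorem is to decide which of these quadratic algebras are Koszul. Geometrically, the rank-$\le1$ locus exhibits $R$ as a fiber product over the $\Pbb^1$ of rows: the minors that straddle two blocks are exactly the Segre-type relations $a_id_j-b_ic_j$ gluing the blocks along their common ratio map to $\Pbb^1$. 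I would use this structure to produce explicit relations for the arguments below, and to reduce the general case to a single extremal pair.

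Next I would show that Koszulness of $R$ is governed by the pair consisting of the longest nilpotent block (length $m$) and the shortest scroll block (length $n$). Several scroll blocks alone form a rational normal scroll, hence are Koszul, indeed G-quadratic, by \cite{ZZ}; adjoining further blocks corresponds to the fiber product over $\Pbb^1$, an operation under which I would track Koszulness using that (Segre and the relevant fiber) products of Koszul algebras are Koszul. In the other direction, quotienting out the linear forms of the remaining blocks should realize the extremal-pair ring, call it $P(m,n)$, as an algebra retract of $R$; since Koszulness is inherited by algebra retracts, non-Koszulness of $P(m,n)$ would then propagate back to $R$. Verifying that these fiber-product operations behave as claimed and that the retract genuinely exists is one place where care is needed.

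For the only-if direction I would work on the small ring $P(m,n)$ and exploit a numerical obstruction: if a standard graded algebra $A=S/I$ is Koszul then its graded Betti numbers over $S$ obey the slope-$2$ bound $\beta^S_{i,j}(A)=0$ for all $j>2i$, the exponent $2$ matching the $2$ in $m\le 2n$. Computing the minimal free resolution of the determinantal ring $P(m,n)$ over $S$ --- an Eagon--Northcott/Lascoux-type complex deformed by the nilpotent block, which is tractable because only one pair of blocks is present --- I would exhibit a minimal syzygy whose internal degree $j$ exceeds $2i$ exactly when $m>2n$. Pinning down this single violating Betti number (and checking it is minimal) is the heart of the necessity proof.

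Finally, for the if direction, when $m\le2n$ I would prove Koszulness by constructing a Koszul filtration of $R$: a family $\Fc$ of ideals generated by linear forms, containing $0$ and $\mm$, such that for every nonzero $I\in\Fc$ there is $J\in\Fc$ with $J\subsetneq I$, $\dim_k(I/J)=1$, and $(J:I)\in\Fc$. I would build $\Fc$ by ordering the variables along the scroll and peeling them off, the point being that the colon ideals $(J:I)$ stay generated by linear forms precisely because $m\le 2n$ prevents the nilpotent relations from forcing a quadric into a colon, so the family closes up; equivalently, one could try to show $R$ is G-quadratic by checking that the scroll minors, nilpotent minors, and cross minors form a Gröbner basis, the critical $S$-polynomials reducing to zero iff $m\le 2n$. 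I expect the main obstacle of the whole proof to be this sufficiency step at the boundary $m=2n$: producing one uniform filtration (or Gröbner basis) valid across all admissible block shapes, in which the inequality $m\le 2n$ is used in an essential and exact way.
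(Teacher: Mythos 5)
Your overall architecture (reduce to a Kronecker--Weierstrass normal form, isolate the extremal nilpotent/scroll pair as an algebra retract for necessity, build a Koszul filtration for sufficiency) matches the paper's, and the retract step is exactly how the paper propagates non-Koszulness back to $R$ (via \cite[Proposition 1.4]{HHO}). The sufficiency half is also the paper's route --- an explicit Koszul filtration (Construction \ref{constr_filtration}) whose colon conditions are verified with Chun's Hilbert series formula, the Gr\"obner basis of \cite{ZZ}, and monoid presentations --- but you have only stated the plan, not produced the filtration, and this is where almost all of the work lies. Two cautions on that half: the paper does not reduce sufficiency to the extremal pair by any fiber-product argument (it builds one filtration for the whole concatenation), and your alternative G-quadratic route is unsupported --- Lemma \ref{lem_Groebner} is only available in the absence of nilpotent blocks.

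The genuine gap is in your necessity argument. You propose to contradict the slope bound $\beta^S_{i,j}(P(m,n))=0$ for $j>2i$. By Theorem \ref{regularity_of_generalized_scroll}, $\reg_S P(m,n)=\lceil (m-1)/n\rceil$, which equals $2$ in the critical boundary case $m=2n+1$. A quadratic algebra of regularity $2$ satisfies the slope bound automatically: $\beta_{1,j}\neq 0$ forces $j=2=2\cdot 1$, and for $i\ge 2$ one has $j\le i+\reg \le i+2\le 2i$. So there is no violating Betti number when $m=2n+1$, which is precisely the first case in which non-Koszulness must be established; the obstruction you chose is provably too coarse there. (Even for $m\ge 2n+2$ you would still have to show that the large regularity is attained at a homological degree $i<\lceil(m-1)/n\rceil$, which the regularity formula alone does not give.) The paper instead works with the resolution of $k$ over the quotient ring itself: writing $T=R(m,n)/(x_1,x_{m+1})$ as an affine semigroup ring modulo a monomial ideal, it applies the Herzog--Reiner--Welker formula \cite{HRW}, exhibits a disconnected complex $\Delta_{\mu,J}$, and concludes $\beta^T_{3,a+1}(k)\neq 0$ with $a+1=\lceil m/n\rceil+1\ge 4$ (Theorem \ref{non-Koszul}); this finer invariant detects non-Koszulness exactly when $m\ge 2n+1$. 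To repair your proof you would need to replace the slope-bound obstruction by a computation of this kind on $\Tor^T(k,k)$ (or some other obstruction that sees the boundary case).
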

Since $k$ is algebraically closed and $\chara k=0$, we may assume that $X$ is already in the Kronecker-Weierstrass normal form. Denote $m$ the length of the longest nilpotent block and $n$ the length of the shortest scroll block of $X$. We deduce the sufficient condition in Theorem \ref{main} by constructing a Koszul filtration for $R$ given that $X$ satisfying the {\em length condition} $m \le 2n$ (Construction \ref{constr_filtration}). The construction supplies new information even for rational normal scrolls. 

As applications, we are able to characterize the rational normal scrolls which ``behave like" algebras defined by quadratic monomial ideals. Let us introduce some more notation. Let $S=k[\xsf_1,\ldots,\xsf_n]$ be a standard graded polynomial algebra which surjects onto the $k$-algebra $R$ (not necessarily a determinantal ring). For any finitely generated graded $R$-module $M$, we use $\reg M$ to denote $\reg_S M$, which is an invariant of $M$. Koszul algebras defined by quadratic monomial relations (see Fr\"oberg's paper \cite{Fr1}) have very strong resolution-theoretic properties. If $R=S/I$ where $I$ is a quadratic monomial ideal of $S$, for any set of variables $Y\subseteq \{\xsf_1,\ldots,\xsf_n\}$ of $S$, we have:
\begin{enumerate}
\item $\reg_R R/(Y ) \le \reg R$;
\item  $R/(Y)$ is a Koszul algebra;
\item (see \cite{HHR}) $\reg_R R/(Y )= 0$.
\end{enumerate} 
Thus all the linear sections by natural coordinates of $R$ have a linear resolution over $R$ and are Koszul algebras. In fact, (i) and (ii) are consequences of (iii) by Lemma \ref{lem_regularity} below.

For $R$ being a rational normal scroll of type $(\nsf_1,\ldots,\nsf_{\tsf})$ where $\tsf\ge 1, 1\le \nsf_1 \le \cdots \le \nsf_{\tsf}$, $R$ is defined by the ideal of maximal minors of the matrix
\[
\left(\begin{matrix}
       y_{1,1}   & y_{1,2} & \ldots  & y_{1,\nsf_1} \\
       y_{1,2}   & y_{1,3} & \ldots  & y_{1,\nsf_1+1} \end{matrix}\right. \left. \dashline ~ \begin{matrix}
                                       y_{2,1} & y_{2,2} &\ldots & y_{2,\nsf_2}  \\
                                       y_{2,2} & y_{2,3} &\ldots & y_{2,\nsf_2+1}
                                       \end{matrix}\right. \left. \dashline \cdots  \right. 
                                                                              \left. \dashline ~ \begin{matrix}
                                                                              y_{\tsf,1} & y_{\tsf,2} &\ldots & y_{\tsf,\nsf_{\tsf}}  \\
                                                                              y_{\tsf,2} & y_{\tsf,3} &\ldots & y_{\tsf,\nsf_{\tsf}+1}
                                                                              \end{matrix} \right),
\]
where $y_{1,1},y_{1,2}\ldots,y_{1,\nsf_1+1},y_{2,1},\ldots,y_{\tsf,\nsf_{\tsf}+1}$ are distinct variables. By {\em the set of natural coordinates} of $R$, we mean $\{y_{1,1},y_{1,2}\ldots,y_{1,\nsf_1+1},y_{2,1},\ldots,y_{\tsf,\nsf_{\tsf}+1}\}$. The main application of Theorem \ref{main} is:
\newpage
\begin{thm}\label{linearly_Koszul}Let $R$ be a rational normal scroll of type $(\nsf_1,\ldots,\nsf_{\tsf})$ where $1\le \nsf_1 \le \cdots \le \nsf_{\tsf}$. Let $Y$ be a subset of the set of natural coordinates of $R$. 
\begin{enumerate}
\item $\reg R/(Y) \le \reg R$ for every possible choice of $Y$ if and only if $R$ is balanced, i.e., $\nsf_{\tsf} \le \nsf_1+1$.
\item $R/(Y)$ is a Koszul algebra for every possible choice of $Y$ if and only if $\nsf_{\tsf}\le 2\nsf_1$.
\end{enumerate}
\end{thm}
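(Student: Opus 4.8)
My plan rests on the reduction that, for any subset $Y$ of the natural coordinates, $R/(Y)$ is again the determinantal ring of a $2\times e$ matrix of linear forms. Substituting $0$ for the variables in $Y$ carries the scroll matrix $X$ to a matrix $X'$, the minors of $X$ reduce to those of $X'$, and hence $R/(Y)\cong k[X']/I_2(X')$. Since the natural coordinates of different scroll blocks are disjoint variables, $X'$ is the direct sum (column concatenation in disjoint variable sets) of the blocks $\Sc_{\nsf_1},\dots,\Sc_{\nsf_\tsf}$, each with its coordinates lying in $Y$ set to zero; because such direct summands decompose independently, the Kronecker-Weierstrass form of $X'$ is assembled block by block. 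The first task, and the technical heart, is the single-block analysis: what happens to $\Sc_\ell$ when a nonempty set of its $\ell+1$ coordinates is killed. Here I would argue that the result carries no scroll block at all — the determinantal ring of $\Sc_\ell$ is a two-dimensional domain, so each coordinate is a nonzerodivisor, killing at least one coordinate drops the dimension to at most $1$, and a matrix whose determinantal ring has dimension $\le 1$ has only nilpotent blocks — while the nilpotent blocks that appear all have length at most $\ell$, the value $\ell$ being attained precisely by killing a single end coordinate, which converts $\Sc_\ell$ into one nilpotent block of length $\ell$.

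Granting this, part (ii) is read off from Theorem \ref{main}. For the ``only if'' I argue contrapositively: if $\nsf_\tsf>2\nsf_1$ then $\tsf\ge 2$ and $\nsf_\tsf>\nsf_1$, so killing the single last coordinate of a longest block leaves all other blocks untouched and produces a matrix $X'$ whose longest nilpotent block has length $\nsf_\tsf$ and whose shortest scroll block (surviving from an untouched $\Sc_{\nsf_1}$) has length $\nsf_1$; as $\nsf_\tsf>2\nsf_1$, Theorem \ref{main} shows $R/(Y)$ is not Koszul. Conversely, if $\nsf_\tsf\le 2\nsf_1$ then for every $Y$ the single-block analysis bounds the longest nilpotent block of $X'$ by $\max_i\nsf_i=\nsf_\tsf$, while every scroll block of $X'$ survives from an untouched $\Sc_{\nsf_i}$ and so has length at least $\nsf_1$; thus the length condition $m\le 2n$ holds for $X'$ (vacuously if there is no nilpotent block or no scroll block), and $R/(Y)$ is Koszul by Theorem \ref{main}.

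For part (i) recall that $\reg R=1$, so the claim is that $\reg R/(Y)\le 1$ for every $Y$ precisely when the scroll is balanced. The governing feature is that killing a single coordinate is cutting the domain $R$ by a nonzerodivisor and preserves regularity, so a jump in regularity requires a set $Y$ that is no longer a regular sequence. For the ``if'' direction I would show that for a balanced scroll every $R/(Y)$ has a linear resolution over $R$, i.e.\ $\reg_R R/(Y)=0$ — this is where the Koszul filtration of Construction \ref{constr_filtration} is used — and then invoke Lemma \ref{lem_regularity}, which for the Koszul algebra $R$ yields $\reg_S M\le \reg_S R$ whenever $\reg_R M=0$, to conclude $\reg R/(Y)\le \reg R=1$. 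For the ``only if'' direction I assume the scroll is not balanced, so $\nsf_\tsf\ge \nsf_1+2$, and take $Y$ to consist of both end coordinates of a longest block while leaving a shortest block $\Sc_{\nsf_1}$ untouched. I would then exhibit $\reg R/(Y)\ge 2$ by a Hilbert-function computation: $\dim_k(R/(Y))_1$ equals the number of surviving variables, the Hilbert polynomial of $R/(Y)$ is computed from the block structure of $X'$, and these disagree already in degree $1$ once $\nsf_\tsf\ge\nsf_1+2$, which forces the regularity above $1$.

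The main obstacle throughout is the single-block Kronecker-Weierstrass analysis together with the regularity estimates that depend on it. The difficulty is that killing an interior coordinate of $\Sc_\ell$ does not split the matrix along its pattern of zeros — adjacent columns keep interacting through cross minors — so one must genuinely transform the matrix into normal form. The clean statements that no scroll block survives a touched block, that the extremal nilpotent length equals $\ell$, and the exact regularity of the resulting degenerate configurations are the substantive points; once they are in hand, both parts follow formally from Theorem \ref{main} and from Lemma \ref{lem_regularity}.
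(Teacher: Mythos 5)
Your reduction to a block-by-block Kronecker--Weierstrass analysis is the right skeleton, and your sufficiency argument for (ii) (nilpotent lengths bounded above by $\nsf_{\tsf}$, scroll lengths bounded below by $\nsf_1$, then Theorem \ref{main}) is essentially the paper's proof. But the technical heart of your proposal contains a false claim that then breaks two of the four directions. A touched scroll block does \emph{not} decompose into nilpotent blocks only: Jordan blocks occur. Your dimension argument correctly rules out scroll blocks in the quotient of a single touched $\Sc_\ell$, but a one-dimensional determinantal ring can perfectly well be cut out by Jordan blocks (a single Jordan block of size $p$ with eigenvalue $0$ gives $k[z_1,\dots,z_p]/I_2$ of dimension $1$), and Example \ref{ex_KW} of the paper shows that killing one interior coordinate of $\Sc_4$ produces two Jordan blocks. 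In particular, killing a \emph{single} end coordinate of $\Sc_\ell$ yields a Jordan block of length $\ell$ with eigenvalue $0$, not a nilpotent block. This sinks your necessity argument for (ii): with your choice of $Y$ the quotient matrix has no nilpotent block at all, so the length condition of Theorem \ref{main} is vacuously satisfied and $R/(Y)$ \emph{is} Koszul. The correct witness kills \emph{both} end coordinates of the longest block, producing a genuine nilpotent block of length $\nsf_{\tsf}$ next to a surviving scroll block of length $\nsf_1$; this is exactly the ring $R(m,n)/(x_1,x_{m+1})$ of Theorem \ref{non-Koszul}.

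Part (i) has two further problems. For the ``if'' direction you propose to show $\reg_R R/(Y)=0$ for every $Y$ when $R$ is balanced and then apply Lemma \ref{lem_regularity}(i); but that intermediate statement is false unless $\nsf_1=\cdots=\nsf_{\tsf}$ --- this is precisely Conca's result (Proposition \ref{strongly_Koszul}, item (iii) in the introduction), and the ideals $(Y)$ for arbitrary $Y$ are not members of the Koszul filtration of Construction \ref{constr_filtration}, which only contains linear ideals of a very constrained shape. The paper instead bounds the absolute regularity directly via the formula $\reg k[X']/I_2(X')=\lceil (m-1)/n\rceil$ of Theorem \ref{regularity_of_generalized_scroll}: in the balanced case $m\le \nsf_1+1$ and $n\ge\nsf_1$, so the regularity is $1$. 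For the ``only if'' direction your witness (both ends of a longest block) is the right one, but the Hilbert-function argument as you state it does not close: disagreement between the Hilbert function and the Hilbert polynomial in degree $1$ only forces $\reg\ge 1$, not $\reg\ge 2$; you would need disagreement in degree $2$. Again Theorem \ref{regularity_of_generalized_scroll} gives the conclusion immediately, since the quotient is the determinantal ring of a nilpotent block of length $\nsf_{\tsf}$ and a scroll block of length $\nsf_1$, of regularity $\lceil (\nsf_{\tsf}-1)/\nsf_1\rceil\ge 2$ when $\nsf_{\tsf}\ge\nsf_1+2$.
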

Note that under the same assumptions, we also have

(iii) (Conca \cite{Con1}) $\reg_R R/(Y)=0$ for every possible choice of $Y$ if and only if $\nsf_{\tsf}=\nsf_1$. Moreover, in that case, $R$ is strongly Koszul in the sense of \cite{HHR}.

The last result was mentioned by Conca in \cite{Con1} without proof; we give an argument here. Part (i) is proved by using a formula of Castelnuovo-Mumford regularity of linear sections of $R$ by Catalano-Johnson \cite{Ca} and Zaare-Nahandi and Zaare-Nahandi \cite{ZZ}. This was conjectured in \cite{Con1}. Part (ii) confirms a conjecture proposed by Conca \cite{Con1}, which was made based on numerical evidences. Note that arguing a little bit further, we do not have to put any restriction on $k$ in Theorem \ref{linearly_Koszul}; see Remark \ref{rem_quotient}. Studying Conca's conjecture was the original motivation of this project.

The paper is structured as follow. In Section \ref{background} we recall Kronecker-Weierstrass theory of matrix pencils, results about determinantal rings of \cite{Ca}, \cite{Ch}, \cite{ZZ} and the notion of Koszul filtration \cite{CTV}. In Section \ref{sect_KW_quotient}, particularly in Proposition \ref{KW_form} and Lemma \ref{KW_quotient}, we describe the changes in the Kronecker-Weierstrass normal forms after going modulo certain linear forms. Section \ref{Koszul_sufficient} is devoted to the proof of the sufficiency part in Theorem \ref{main} using Koszul filtration (Construction \ref{constr_filtration}). To verify the validity of our Koszul filtration, we use the Hilbert series formula of $2\times e$ matrices of linear forms discovered by Chun and a Gr\"obner basis formula for such matrices due to Rahim Zaare-Nahandi and Rashid Zaare-Nahandi. In Section \ref{Koszul_necessary}, the necessity part in Theorem \ref{main} is established by using the monoid presentation of a rational normal scroll and a formula for multigraded Betti numbers of $k$ due to Herzog, Reiner and Welker \cite{HRW}. We prove Theorem \ref{linearly_Koszul} in Section \ref{sect_application}. As another application of Theorem \ref{main}, we classify completely the rational normal scrolls whose all quotients by linear ideals are Koszul algebras (Theorem \ref{ul_Koszul_scrolls}).

\section{Background}
\label{background}
\subsection{Kronecker-Weierstrass normal forms}
Let $k$ be an algebraically closed field of characteristic zero. We review the theory of Kronecker-Weierstrass normal forms in this section. For a detailed discussion, we refer to \cite[Chapter XII]{Ga}. For more recent treatment and algorithms for finding the Kronecker-Weierstrass normal forms, we refer to \cite{BV}, \cite{V}. Let $S=k[\xsf_1,\ldots,\xsf_{n}]$ be a polynomial ring over a field $k$ (where $n\ge 1$). Let $\xsf_1^*,\ldots,\xsf_{n}^*$ be the basis for the dual vector space of the $k$-vector space $\Vsf$ with basis $\xsf_1,\ldots,\xsf_{n}$. Let $X$ be a $2\times e$ matrix of linear forms in $S$ (where to avoid triviality, we assume $e\ge 2$).

Each row of $X$ can be identified with a matrix in $M_{e\times n}$ in the following way: let $\rsf=(\lsf_1,\ldots,\lsf_e)$ be a row, then for $i=1,\ldots,n$, the $i$th {\em column} of the matrix $M_{\rsf}$ is given by $(\xsf_i^*(\lsf_1),\ldots,\xsf_i^*(\lsf_e))^T$. Thus
\[
M_{\rsf}=\left(\begin{matrix}
\xsf^*_1(\lsf_1) & \xsf^*_2(\lsf_1) &\ldots & \xsf^*_n(\lsf_1)\\
\xsf^*_1(\lsf_2) & \xsf^*_2(\lsf_2) &\ldots & \xsf^*_n(\lsf_2)\\
\ldots           &\ldots            &\ldots &\ldots\\
\xsf^*_1(\lsf_e) & \xsf^*_2(\lsf_e) &\ldots & \xsf^*_n(\lsf_e)
\end{matrix}\right) \in M_{e\times n}.
\]

Now $X$ can be identified with the vector subspace of $\Vsf^e$ generated by two rows $\rsf_1,\rsf_2$ of $X$. In turn, this vector subspace of $\Vsf^e$ can be identified with the vector subspace $\Vsf_X$ generated by two matrices $M_{\rsf_1},M_{\rsf_2}$ of $M_{e\times n}$.

If $\dim \Vsf_X \le 1$ then $\rsf_1,\rsf_2$ are linearly dependent and $I_2(X)=0$. So let us assume that $\dim \Vsf_X = 2$. From the Kronecker-Weierstrass theory of matrix pencils, there exist invertible matrices $C\in \GL (k^e), C'\in \GL(\Vsf)$ such that 
\[
C(M_{\rsf_1}+v M_{\rsf_2})C' =
\left( \begin{matrix}
L_{m_1-1}^T &  & & & & & & & \\
& \ddots     &  & & & & & &  \\
& & L_{m_c-1}^T & & & & & &  \\
& & & L_{n_1} & & & & & \\
& & & & \ddots & & & & & \\ 
& & & & & L_{n_d} & & &  \\
& & & & & & J_{p_1, \lambda_1} & &  \\
& & & & & & & \ddots & & \\
& & & & & & & & J_{p_g, \lambda_g}  
\end{matrix} \right),
\]
where $v$ is a variable, 
\[
L_{m-1}=
\left(\begin{matrix}
1 & v & \cdots & 0 & 0\\
0 & 1 &  v     & \cdots & 0\\
\vdots & \vdots & \ddots & \ddots & \vdots\\
0 & 0 & \cdots & 1      & v
\end{matrix}\right) \in M_{(m-1)  \times m},
\]
and 
\[
J_{p, \lambda}=
\left(\begin{matrix}
\lambda v+1 & v & \cdots & 0 &0 \\
0   & \lambda v+1 & v & \cdots & 0\\
\vdots & \vdots & \ddots   & \ddots & \vdots\\
0 & 0  & \cdots & \lambda v+1 & v \\
0 & 0  & \cdots & 0 & \lambda v+1
\end{matrix} \right) \in M_{p \times p}.
\]
Since $C$ and $C'$ are invertible, $X$ defines the same determinantal ideal as the matrix with rows corresponding to the matrices $CM_{\rsf_1} C', C M_{\rsf_2}C'$. Concretely, the last matrix is a concatenation of the following three types of matrices
\[
\left(\begin{matrix}
       x_{i,1}   & x_{i,2} & \ldots  & x_{i,m_i-1}   & 0\\
       0         & x_{i,1} & \ldots  & x_{i,m_i-2} & x_{i,m_i-1}\end{matrix}\right),
\]
\newpage
\[
\left(\begin{matrix}
       y_{j,1}   & y_{j,2} & \ldots  & y_{j,n_j} \\
       y_{j,2}   & y_{j,3} & \ldots  & y_{j,n_j+1} \end{matrix}\right),
\]
and
\[
\left(\begin{matrix}
z_{l,1}                   & z_{l,2}       & \ldots  & z_{l,p_l-1}        & z_{l,p_l} \\
z_{l,2}+\lambda_l z_{l,1}   & z_{l,3}+\lambda_l z_{l,2} & \ldots  & z_{l,p_l}+\lambda_l z_{l,p_l-1} &\lambda_l z_{l,p_l}\end{matrix}\right),
\]
where $\bsx,\bsy,\bsz$ are independent linear forms of $S$, $1\le i\le c, 1\le j\le d$ and $1\le l\le g$ for some $c,d,g\ge 0$. We call these matrices  nilpotent block, scroll block and Jordan block with eigenvalue $\lambda_l$, respectively. By definition, the {\em length} of these blocks are $m_i,n_j$ and $p_l$, respectively. The numbers $c,d$ and the lengths of nilpotent and scroll blocks $m_i,n_j$ where $1\le i\le c, 1\le j \le d$ are invariants of $X$ but $\lambda_l$s are not. See \cite{Ga}, \cite[Section 3]{Ca} for more details. 

For the convenience of our arguments, we write the columns of nilpotent blocks with the reverse order and re-index. Hence in our notation, nilpotent blocks are of the form
\[
\left(\begin{matrix}
       0        & x_{i,1}    & x_{i,2}     & \ldots          & x_{i,m_i-2} & x_{i,m_i-1}\\
      x_{i,1}   & x_{i,2}    & x_{i,3}     & \ldots          & x_{i,m_i-1}   & 0 \end{matrix}\right).
\]
We call concatenation of the above scroll blocks, nilpotent blocks (in our notation) and Jordan blocks obtained from $CM_{\rsf_1} C'$ and $CM_{\rsf_2} C'$ a {\em Kronecker-Weierstrass normal form} of $X$. 

Fix a Kronecker-Weierstrass normal form of $X$. For our purpose, Jordan blocks with different eigenvalues behave differently, so we will refine our notation. We assume that the Jordan blocks of $X$ are divided into $g_i$ Jordan blocks with eigenvalue $\lambda_i$, for $i = 1, \ldots, t$. Here, the eigenvalues $\lambda_1, \lambda_2, \ldots, \lambda_t$ are pairwise distinct. Concretely, 
\[
X=\left(\begin{matrix} X_{\text{nil}} \dashline X_{\text{sc}} \dashline X^{1}_1 & X^1_2 & \cdots & X^1_{g_1} \dashline \cdots \dashline X^t_1 & X^t_2 & \cdots X^t_{g_t}                      \end{matrix}\right),
\]                  
where 
\[
X^i_j=\left(\begin{matrix} z^i_{j,1}     & z^i_{j,2}  &\ldots & z^i_{j,p_{ij}} \\ 
                       z^i_{j,2} + \lambda_i z^i_{j,1}     & z^i_{j,3} + \lambda_i z^i_{j,2}  &\ldots &\lambda_i z^i_{j,p_{ij}}
                                       \end{matrix}\right).
\]
Here $X_{\text{nil}}, X_{\text{sc}}$ denote the submatrices of $X$ consisting of nilpotent blocks and scroll blocks, respectively. In addition, we assume that $p_{i1} \ge p_{i2} \ge \cdots \ge p_{ig_i}$ for $1\le i\le t$. 

We call the sequence $(m_1 \le m_2 \le \cdots \le m_c, n_1 \le n_2 \le \cdots \le n_d, p_{11} \ge \cdots \ge p_{1g_1},\ldots,p_{t1}\ge \cdots \ge p_{tg_t})$ the {\em length sequence} of $X$. We write the length sequence of (the given Kronecker-Weierstrass normal form of) $X$ as follow
\[
(\underbrace{m_1,\ldots,m_c}_{\Ncc},\underbrace{n_1,\ldots,n_d}_{\Sc},\underbrace{p_{11},\ldots,p_{1g_1},p_{21},\ldots,p_{tg_t}}_{\Jc}).
\]

\begin{ex}
\label{ex_KW}
Let $R$ be the $(2,4)$ scroll defined by the following matrix
\[
\left(\begin{matrix}
        y_{11} & y_{12} \\ 
        y_{12} & y_{13}  \end{matrix}\right. 
        \dashline \left. ~ \begin{matrix}
                                      y_{21} & y_{22} & y_{23} & y_{24} \\ 
                                      y_{22} & y_{23} & y_{24} & y_{25}
                                   \end{matrix} \right),
\]
We show that $R/(y_{23})$ is defined by two Jordan blocks with eigenvalue $0$ and $1$ and a scroll block of length $2$.

Changing variables for simplicity, clearly $R/(y_{23})$ is defined by the following matrix
\[
\left(\begin{matrix}
           z_1 & z_2\\ 
           z_2 & z_3
          \end{matrix} \right. ~ \left. \begin{matrix}
        t_1 & t_2 & 0   & u_1 \\ 
        t_2 & 0   & u_1 & u_2 \end{matrix}\right). 
\]
Adding the second row to the first row, we get
\[
\left(\begin{matrix}
          z_1+z_2 & z_2+z_3\\ 
           z_2     & z_3
          \end{matrix} \right.  ~ \left. \begin{matrix}
        t_1+t_2 & t_2 & u_1   & u_1+u_2 \\ 
        t_2     & 0   & u_1   & u_2 \end{matrix}\right).
\]
Multiplying the last column with -1, then swapping its to the previous column, we get
\[
\left(\begin{matrix}
                                      z_1+z_2 & z_2+z_3\\ 
                                      z_2     & z_3
                                   \end{matrix} \right. ~ \left. \begin{matrix}
        t_1+t_2 & t_2 & -u_1-u_2 & u_1   \\ 
        t_2     & 0   & -u_2    & u_1   \end{matrix}\right).
\]
Let $w_1=-u_1-u_2$, the last matrix is nothing but
\[
\left(\begin{matrix}
                                      z_1+z_2 & z_2+z_3\\ 
                                      z_2     & z_3
                                   \end{matrix} \right. \dashline  \left. \begin{matrix}
        t_1+t_2 & t_2 & w_1    & u_1   \\ 
        t_2     & 0   & u_1+w_1    & u_1   \end{matrix}\right).
\]
Adding the second column to the first, we get
\[
\left(\begin{matrix}
                                      z_1+2z_2+z_3 & z_2+z_3\\ 
                                      z_2+z_3     & z_3
                                   \end{matrix} \right. ~ \dashline  \left. \begin{matrix}
        t_1+t_2 & t_2 \\  
        t_2     & 0 
        \end{matrix} \right. ~ \dashline  \left. \begin{matrix}
        w_1    & u_1   \\
        u_1+w_1    & u_1     
        \end{matrix}\right).
\]
which is a concatenation of a scroll block, a Jordan block with eigenvalue 0 and another Jordan block with eigenvalue 1.
\end{ex}
\begin{rem}
\label{rem_quotient}
Note that using arguments similar to that of Example \ref{ex_KW}, one can show that if $R$ is a rational normal scroll and $Y$ is a set of natural coordinates then $R/(Y)$ is defined by nilpotent, scroll and Jordan blocks with eigenvalue 0 or 1. There is no need to assume that $k$ is algebraically closed of characteristic zero in these arguments. We leave the details to the interested reader.
\end{rem}

\subsection{Hilbert series and Castelnuovo-Mumford regularity}
\label{sect_CMreg}
Let $R$ be a standard graded $k$-algebra. For a finitely generated graded $R$-module $M$, we define the Castelnuovo-Mumford regularity of $M$ by
\[
\reg_R M=\sup\{j-i: \Tor^R_i(k,M)_j\neq 0\}.
\]
The following result is well-known; we state it for ease of reference.
\begin{lem}[{\cite[Proposition 2.1]{Cha}}]
\label{lem_regularity}
Let $S\to R$ be a surjection of standard graded $k$-algebras, $M$ a finitely generated graded $R$-module. Then
\begin{enumerate}
\item $\reg_S M\le \reg_S R +\reg_R M$.
\item If $\reg_S R\le 1$ then $\reg_R M \le \reg_S M$.
\end{enumerate}
\end{lem}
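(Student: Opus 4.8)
The plan is to run a single change-of-rings spectral sequence and extract (i) from a degree count on the $E^2$-page and (ii) from an induction on homological degree. Regard $M$ as an $S$-module via the surjection $S\to R$. Derived associativity $k\otimes^L_S M\simeq (k\otimes^L_S R)\otimes^L_R M$ yields a graded spectral sequence
\[
E^2_{p,q}=\Tor^R_p\big(\Tor^S_q(k,R),M\big)\Longrightarrow \Tor^S_{p+q}(k,M),
\]
in which the internal grading is preserved. The key observation is that $\Tor^S_q(k,R)$ is a graded $k$-vector space: if $\beta^S_{q,a}(R)\ne 0$ then $q\le a\le q+\reg_S R$, the upper bound by definition of regularity and the lower bound by minimality of the $S$-free resolution of $R$. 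Writing $\Tor^S_q(k,R)=\bigoplus_a k(-a)^{\beta^S_{q,a}(R)}$ as an $R$-module, the degree-$j$ strand of $E^2_{p,q}$ is $\bigoplus_a \Tor^R_p(k,M)_{j-a}^{\beta^S_{q,a}(R)}$.

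For (i) I would only use that $E^\infty$ is a subquotient of $E^2$. If $E^2_{p,q}$ is nonzero in internal degree $j$, then there is some $a\le q+\reg_S R$ with $\Tor^R_p(k,M)_{j-a}\ne 0$, whence $(j-a)-p\le \reg_R M$ and so $j\le (p+q)+\reg_S R+\reg_R M$. Since $\Tor^S_{p+q}(k,M)$ is filtered with subquotients among the $E^\infty_{p,q}$, every nonzero $\Tor^S_n(k,M)_j$ forces $j-n\le \reg_S R+\reg_R M$, which is exactly $\reg_S M\le \reg_S R+\reg_R M$. One could equivalently splice the minimal $R$-resolution of $M$ with $S$-resolutions of its free summands, but the spectral sequence makes the bookkeeping automatic.

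For (ii) the hypothesis $\reg_S R\le 1$ enters decisively, forcing $a\le q+1$ in the estimate above. Set $r=\reg_S M$; I would prove by induction on $p$ that $\Tor^R_p(k,M)_j=0$ for all $j>p+r$. The base case $p=0$ holds because $\Tor^R_0(k,M)=M/\mm M=\Tor^S_0(k,M)$. For the step, note that the bottom row $E^2_{p,0}=\Tor^R_p(k,M)$ receives no differentials (the sources $E^{p+\rho,\,1-\rho}$ vanish in the first quadrant), while the outgoing differential $d_\rho$ lands in $E^{p-\rho,\,\rho-1}$. Its degree-$j$ strand is a sum of terms $\Tor^R_{p-\rho}(k,M)_{j-a}$ with $\rho-1\le a\le\rho$; since $a\le\rho$ we get $(j-a)-(p-\rho)=(j-p)+(\rho-a)\ge j-p>r$, so by the inductive hypothesis at homological degree $p-\rho<p$ each such term vanishes. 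Hence the class survives to $E^\infty_{p,0}$, a subquotient of $\Tor^S_p(k,M)_j$; as $j>p+r=p+\reg_S M$ this group is zero, forcing $\Tor^R_p(k,M)_j=0$ and completing the induction, i.e. $\reg_R M\le\reg_S M$.

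I expect the main obstacle to be the bookkeeping of the internal grading through the spectral sequence, and in particular verifying in (ii) that the would-be-surviving bottom-row class is genuinely not killed. This is precisely where $\reg_S R\le 1$ is indispensable: only the bound $a\le\rho$ guarantees $\rho-a\ge 0$ and keeps the targets of the differentials inside the region already controlled by induction. Everything else reduces to a routine degree count on the $E^2$-page.
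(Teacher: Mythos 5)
The paper does not prove this lemma; it is quoted verbatim from Chardin's preprint \cite{Cha} and stated ``for ease of reference,'' so there is no in-paper argument to compare against. Your proof is correct and is essentially the standard change-of-rings argument (and the one underlying the cited reference): the spectral sequence $E^2_{p,q}=\Tor^R_p(\Tor^S_q(k,R),M)\Rightarrow\Tor^S_{p+q}(k,M)$ is legitimate, $\Tor^S_q(k,R)$ is indeed killed by $\mm_R$ (its $S$-action factors through $k$, and $\mm_R$ is the image of $\mm_S$), so it splits as shifted copies of $k$ over $R$ and your identification of the degree-$j$ strand of $E^2_{p,q}$ is right. The degree count for (i) is complete, and in (ii) the induction on $p$ is sound: the bottom row receives no differentials, the targets $E^\rho_{p-\rho,\rho-1}$ in internal degree $j$ involve only $\Tor^R_{p-\rho}(k,M)_{j-a}$ with $a\le(\rho-1)+\reg_S R\le\rho$, so $(j-a)-(p-\rho)\ge j-p>r$ and the inductive hypothesis kills them — you correctly isolate this as the one place $\reg_S R\le 1$ is indispensable. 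Two cosmetic remarks: the lower bound $a\ge q$ from minimality is never actually used in either part, and the hypotheses allow $S$ to be an arbitrary standard graded algebra (not a polynomial ring), but nothing in your argument requires finiteness of the resolution of $k$ over $S$, so this causes no harm.
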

Let $X$ be a Kronec\-ker-Weierstrass matrix of length sequence 
\[
\underbrace{m_1, \ldots , m_c}_{\Ncc},\underbrace{n_1,\ldots,n_d}_{\Sc}, \underbrace{p_{11},\ldots, p_{1g_1},\ldots,p_{t1}, \ldots, p_{tg_t}}_{\Jc}.
\]
Denote $m=m_c = \max\{m_1,\ldots,m_c\}.$ For integers $b, q$, let $N(n_1,\ldots,n_d;b,q)$ denote the cardinality of the set
\[
\left\{(\vsf_1,\ldots,\vsf_d):\vsf_j \in \Z_{\ge 0}, \sum_{j=1}^{d}n_j\vsf_j \le b-1 ~\textnormal{and $\sum_{i=1}^{d}\vsf_i=q-1$}\right \}.
\]
We immediately have the following:
\begin{lem}
\label{lem_N_function}
If $b \le (q-1)\cdot \min\{n_1,\ldots,n_d\}$ then $N(n_1,\ldots,n_d,b,q)=0$.\qed
\end{lem}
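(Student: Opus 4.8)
The plan is to show directly that, under the hypothesis $b \le (q-1)\cdot\min\{n_1,\ldots,n_d\}$, the set whose cardinality defines $N(n_1,\ldots,n_d;b,q)$ is in fact empty, so that its cardinality is $0$. No generating-function or counting argument is needed; I would simply derive a contradiction from the two constraints defining membership.

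First I would abbreviate $n_{\min}=\min\{n_1,\ldots,n_d\}$ and suppose, toward a contradiction, that some tuple $(\vsf_1,\ldots,\vsf_d)$ with $\vsf_j\in\Z_{\ge 0}$ belongs to the set. The membership conditions are $\sum_{j=1}^{d}n_j\vsf_j\le b-1$ and $\sum_{j=1}^{d}\vsf_j=q-1$. Using $n_j\ge n_{\min}$ and $\vsf_j\ge 0$ for every $j$, together with the equality constraint, I would estimate
\[
\sum_{j=1}^{d}n_j\vsf_j \ge n_{\min}\sum_{j=1}^{d}\vsf_j = n_{\min}(q-1)\ge b,
\]
where the final inequality is exactly the standing hypothesis. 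Hence $\sum_{j=1}^{d}n_j\vsf_j\ge b > b-1$, which contradicts the constraint $\sum_{j=1}^{d}n_j\vsf_j\le b-1$. Therefore no such tuple exists and $N(n_1,\ldots,n_d;b,q)=0$.

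There is no genuine obstacle here: the whole statement rests on the single elementary inequality $\sum_{j}n_j\vsf_j\ge n_{\min}\sum_{j}\vsf_j$ applied to nonnegative integers. The only point deserving a moment of care is the off-by-one bookkeeping between the weak bound $\le b-1$ appearing in the constraint and the hypothesis stated with $b$; it is precisely the resulting strict inequality $b>b-1$ that closes the argument.
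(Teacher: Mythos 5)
Your proof is correct and is precisely the one-line estimate $\sum_j n_j\vsf_j\ge n_{\min}\sum_j\vsf_j=n_{\min}(q-1)\ge b>b-1$ that the paper has in mind when it states the lemma as immediate (with no written proof). Nothing to add.
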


Let $R$ be the determinantal ring of $X$. Let $R'$ be the determinantal ring of the submatrix of $X$ consisting of Jordan and scroll blocks. We cite the following result for later usage.
\begin{thm}[Chun, {\cite[2.2.3]{Ch}}]
\label{Hilbert_series}
The Hilbert series of $R=k[X]/I_2(X)$ is given by 
\[
H_R(v)=(\sum_{i=1}^{c}m_i-c)v+\sum_{q=2}^{m}\left(\sum_{i=1}^c\sum_{r=0}^{m_i-2}N(n_1,\ldots,n_d;m_i-1-r,q)\right)v^q  +H_{R'}(v).
\]
\end{thm}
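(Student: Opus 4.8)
The plan is to compute $H_R$ by splitting the standard monomials of $R$ according to how many variables of the nilpotent blocks they use: the term $H_{R'}(v)$ should account for the monomials using none, and the other two terms for those using exactly one. Write $A\subseteq R$ for the subalgebra generated by the scroll variables $y_{j,l}$, and let $x_{i,1},\dots,x_{i,m_i-1}$ be the variables of the $i$th nilpotent block. The first step is to record two vanishing statements, both read off from the $2$-minors of $X$: (A) the product of any two nilpotent variables (from the same or different blocks) is zero in $R$, and (B) the product of a nilpotent variable with any Jordan variable is zero in $R$. For (A) one pairs a column of the form $(0,x_{i,1})$ or $(x_{i,m_i-1},0)$ with an arbitrary nilpotent column to obtain relations $x_{i,1}(\,\cdot\,)=0$ and $x_{i,m_i-1}(\,\cdot\,)=0$, and then uses the minors coming from pairs of interior columns to shift any product to one of these. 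For (B) one argues by induction on $a$, with base case $x_{i,1}z_l=0$ and inductive step supplied by the minors of $X$ involving the Jordan columns $(z_l,z_{l+1}+\lambda z_l)$ and $(z_p,\lambda z_p)$; the eigenvalue $\lambda$ drops out of the computation, which is exactly why the Jordan lengths do not appear in the formula.

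Granting (A) and (B), every nonzero monomial of $R$ either involves no nilpotent variable — such monomials span the scroll-and-Jordan determinantal ring $R'$ and contribute $H_{R'}(v)$ — or involves exactly one nilpotent variable $x_{i,a}$ to the first power, multiplied by a monomial of $A$. Hence as graded vector spaces $R=R'\oplus\bigoplus_{i=1}^{c}N_i$ with $N_i=\sum_{a}x_{i,a}A$, and it remains to find $\dim_k(N_i)_q$. In degree $1$ the elements $x_{i,1},\dots,x_{i,m_i-1}$ are independent, giving the linear term $\sum_i(m_i-1)=\sum_i m_i-c$. For $q\ge 2$ I would use the toric presentation of the scroll, $y_{j,l}\mapsto a^{\,n_j+1-l}b^{\,l-1}c_j$, under which a monomial of $A$ of degree $q-1$ is recorded by its block degrees $(\vsf_1,\dots,\vsf_d)$ with $\sum_j\vsf_j=q-1$ together with its $b$-degree $B\in[0,T]$, where $T=\sum_j n_j\vsf_j$. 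The minors mixing a nilpotent and a scroll column give $x_{i,a}y_{j,l+1}=x_{i,a+1}y_{j,l}$, which identify $x_{i,a}\cdot m$ with the datum $(\vsf_1,\dots,\vsf_d,\beta)$ where $\beta=(a-1)+B$, while the boundary relations $x_{i,1}y_{j,l}=0$ $(l\le n_j)$ and $x_{i,m_i-1}y_{j,l}=0$ $(l\ge 2)$ annihilate precisely the classes with $\beta<T$ or with $\beta>m_i-2$. Thus the surviving classes are those with $T\le\beta\le m_i-2$, giving $\dim_k(N_i)_q=\sum_{\vsf:\,\sum_j\vsf_j=q-1}\max\{0,\,m_i-1-T\}$. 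A direct rearrangement identifies this with $\sum_{r=0}^{m_i-2}N(n_1,\dots,n_d;m_i-1-r,q)$, since for a fixed $\vsf$ the inner summand $m_i-1-T$ counts exactly the integers $r\in[0,m_i-2]$ with $T\le m_i-2-r$; summing over $i=1,\dots,c$ then reproduces the claimed expression for $H_R(v)$.

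The one point that needs genuine care — and what I expect to be the main obstacle — is the \emph{exactness} of this count. The steps above exhibit relations that force certain classes to vanish and identify others, but to conclude that the remaining classes are linearly independent in $R$ (equivalently, that no further relations have been missed) I would invoke the quadratic Gr\"obner basis for $I_2(X)$ of Zaare-Nahandi and Zaare-Nahandi from \cite{ZZ}. This yields an explicit standard-monomial basis of $R$ and simultaneously guarantees $I_2(X)\cap k[\text{scroll, Jordan variables}]=I_2(X')$, so that the subalgebra $R'$ and the toric scroll $A$ indeed have the Hilbert functions used above. With such a basis in hand, the three displayed contributions are nothing more than the partition of the standard monomials by the number of nilpotent-block variables they involve, and the formula for $H_R(v)$ follows.
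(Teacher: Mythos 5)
First, a point of reference: the paper does not prove this statement at all --- Theorem \ref{Hilbert_series} is quoted verbatim from Chun's thesis \cite[2.2.3]{Ch} and used as a black box --- so there is no in-paper argument to compare yours against. Judged on its own, your structural setup is correct and matches what the paper later establishes independently in Lemma \ref{lem_identities}(i): the square of the ideal of nilpotent variables vanishes, nilpotent variables annihilate Jordan variables, so $R=R'\oplus\bigoplus_i N_i$ with $N_i=\sum_a x_{i,a}A$, and your $(\vsf,\beta)$-bookkeeping with $\beta=(a-1)+B$ invariant under $x_{i,a}y_{j,l+1}=x_{i,a+1}y_{j,l}$ correctly shows that the classes with $\beta<T$ or $\beta>m_i-2$ die; the rearrangement identifying $\sum_{\vsf}\max\{0,m_i-1-T\}$ with $\sum_{r=0}^{m_i-2}N(n_1,\dots,n_d;m_i-1-r,q)$ is also right. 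This gives the displayed expression as an \emph{upper bound} for $H_R(v)$ (coefficientwise). The identification of the nilpotent-free part with $R'$ needs no Gr\"obner basis, by the way: setting the nilpotent variables to zero defines a retraction $R\to R'$, so $R'$ embeds as the span of those monomials.

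The genuine gap is exactly where you place it, but your proposed fix does not work. You want to invoke ``the quadratic Gr\"obner basis for $I_2(X)$ of Zaare-Nahandi and Zaare-Nahandi'' to certify linear independence of the surviving classes. That result (Lemma \ref{lem_Groebner}, i.e.\ \cite[Proposition 3.1]{ZZ}) is stated, and is only true, under the hypothesis that $X$ has \emph{no nilpotent block} --- and the independence you need concerns precisely the nilpotent part $N_i$. Worse, no quadratic Gr\"obner basis can exist in general for the matrices covered by Theorem \ref{Hilbert_series}: when $m>2n$ the ring $R$ is not even Koszul by Theorem \ref{main}, hence $I_2(X)$ is not G-quadratic. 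One can see the failure concretely already for a nilpotent block of length $3$ concatenated with a scroll block of length $1$: there $x_1y_2=x_2y_1$ is a nonzero class in degree $2$, yet $x_1y_2^2=x_2y_2y_1\in I_2(X)$, so the initial ideal of $I_2(X)$ acquires a cubic generator in the natural revlex orders and the $2$-minors are not a Gr\"obner basis. So the standard-monomial basis you rely on is not available, and the lower bound $\dim_k(N_i)_q\ge\sum_{\vsf}\max\{0,m_i-1-T\}$ --- equivalently, that your list of relations generates \emph{all} relations among the $x_{i,a}m$ --- is left unproven. Closing this requires a different device (e.g.\ a non-quadratic Gr\"obner basis for the general Kronecker--Weierstrass form, an explicit filtration of $\bigoplus_i N_i$ by shifted copies of quotients of the scroll ring, or the free resolution computed in \cite{Ca}, \cite{ZZ}), which is presumably what Chun's thesis supplies.
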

The regularity of the determinantal rings of $2\times e$ matrices of linear forms can be computed as follow.
\begin{thm}[{\cite[Section 5]{Ca}},~{\cite[Theorem 4.2]{ZZ}}]
\label{regularity_of_generalized_scroll}
Let $X$ be a $2\times e$ matrix of linear forms such that $I_2(X)\neq 0$. If in a Kronecker-Weierstrass normal form of $X$, $m$ is the length of the longest nilpotent block and $n$ is the length of the shortest scroll block, then $\reg k[X]/I_2(X)=1$ if either $m \le 1$ or $n = 0$, and $\lceil \frac{m-1}{n}\rceil$ otherwise.
\end{thm}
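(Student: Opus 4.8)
The plan is to isolate the nilpotent blocks as the finite-length torsion of $R$ and to read the regularity off from it, using Chun's Hilbert series formula (Theorem~\ref{Hilbert_series}) as the source of the numerics. Let $J\subseteq R$ be the ideal generated by the images of all linear forms $\bsx$ occurring in the nilpotent blocks $X_{\text{nil}}$, and let $R'$ be the determinantal ring of the submatrix consisting of the scroll and Jordan blocks. First I would check the elementary fact that killing the nilpotent variables sends every nilpotent column to zero and every cross minor (nilpotent column against any other column) into $J$, so that $I_2(X)+(\bsx)=I_2(X_{\text{sc}}\mid X^1_1\mid\cdots\mid X^t_{g_t})+(\bsx)$ and hence $R/J\cong R'$ as graded algebras. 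Consequently $H_{R/J}(v)=H_{R'}(v)$, and comparing with Theorem~\ref{Hilbert_series} gives for free
\[
H_J(v)=H_R(v)-H_{R'}(v)=\Big(\sum_{i=1}^c m_i-c\Big)v+\sum_{q=2}^m\Big(\sum_{i=1}^c\sum_{r=0}^{m_i-2}N(n_1,\ldots,n_d;m_i-1-r,q)\Big)v^q,
\]
which is a genuine polynomial in $v$. Therefore $J$ has finite length; in particular $J\subseteq H^0_{\mm}(R)$ and $\reg_S J$ equals the top degree of $J$.

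The second step is to compute that top degree. Set $D:=\lceil (m-1)/n\rceil$. By Lemma~\ref{lem_N_function} the summand $N(n_1,\ldots,n_d;m_i-1-r,q)$ vanishes as soon as $m_i-1-r\le (q-1)n$; since $m_i-1-r\le m-1$ and $(q-1)n\ge m-1$ for every $q\ge D+1$, no term of $H_J(v)$ has degree exceeding $D$. Conversely, placing all the weight on a shortest scroll block produces a valid tuple, so $N(n_1,\ldots,n_d;m-1,D)\ge 1$ and the coefficient of $v^D$ is strictly positive. Hence $\reg_S J=\deg H_J(v)=D$.

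For the regularity of $R$ itself I would combine the short exact sequence $0\to J\to R\to R'\to 0$ with the base case that a determinantal ring with no nilpotent blocks is Cohen--Macaulay of regularity $1$ (so $\reg_S R'=1$ and $\depth R'\ge 1$). The sequence then gives $\reg_S R\le\max\{\reg_S J,\reg_S R'\}=\max\{D,1\}=D$, while the long exact sequence in local cohomology yields $H^0_{\mm}(R)=J$ (because $H^0_{\mm}(R')=0$), whence $\reg_S R\ge a_0(R)=\deg H_J(v)=D$. This settles the main case $m\ge 2$, $n\ge 1$. The degenerate cases are immediate by the same bookkeeping: when $m\le 1$ one has $R=R'$, and when $n=0$ the polynomial $H_J(v)$ reduces to $(\sum m_i-c)v$ of degree $1$, so that $R$ is covered by the same sequence (or is Artinian with top degree $1$); in both situations $\reg R=1$.

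The hard part will be the base case $\reg_S R'=1$ together with the Cohen--Macaulayness of $R'$: once these are in place the rest is driven entirely by Theorem~\ref{Hilbert_series} and the vanishing Lemma~\ref{lem_N_function}. I expect to obtain it by showing that the scroll--Jordan matrix is $1$-generic in Eisenbud's sense, so that $R'$ is Cohen--Macaulay with a $2$-linear resolution; alternatively one can produce a quadratic Gr\"obner basis for the no-nilpotent case. The delicate point to watch is the Jordan blocks with eigenvalue $0$, whose bottom-right zero entry must be handled by verifying genericity of the relevant generalized rows directly. I note that this argument reproves the cited results of \cite{Ca} and \cite{ZZ} in a way that meshes with the Hilbert series machinery already set up in the paper.
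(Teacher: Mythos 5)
First, a point of comparison: the paper does not prove this statement at all --- it is quoted from \cite[Section 5]{Ca} and \cite[Theorem 4.2]{ZZ} --- so there is no in-paper argument to measure yours against. On its own terms, the reduction you set up is correct and fits well with the paper's toolkit: the ideal $J$ generated by the nilpotent variables does satisfy $R/J\cong R'$, its Hilbert series is the polynomial part of Chun's formula, the vanishing bound from Lemma \ref{lem_N_function} together with the tuple concentrated on a shortest scroll block (using $n(D-1)\le m-2$ for $D=\lceil (m-1)/n\rceil$) correctly pins down $\deg H_J(v)=D$, and the inclusion $J\subseteq H^0_{\mm}(R)$ plus the short exact sequence give $\reg_S R=D$ \emph{provided} $\reg_S R'\le 1$. (You do not actually need $H^0_{\mm}(R)=J$ or the Cohen--Macaulayness of $R'$ for the lower bound; $J_D\neq 0$ already forces $a_0(R)\ge D$.)

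The genuine gap is the base case $\reg_S R'=1$, which is exactly the $m\le 1$ assertion of the theorem and is the actual content of the cited results; neither of your two proposed routes closes it as stated. The $1$-genericity route fails for \emph{every} Jordan block, not just those with eigenvalue $0$: for a block with eigenvalue $\lambda$ the generalized row $r_2-\lambda r_1=(z_2,z_3,\ldots,z_p,0)$ has a zero entry, so the scroll--Jordan matrix is never $1$-generic once a Jordan block is present, and Eisenbud's theorem does not apply. The Gr\"obner basis route (Lemma \ref{lem_Groebner}) yields G-quadraticity, hence Koszulness, but a quadratic monomial initial ideal need not be $2$-regular (e.g.\ a complete intersection of two quadric monomials has regularity $2$), so you would still have to prove that the specific initial ideal of $I_2(X')$ has a linear resolution --- which is essentially what \cite[Theorem 4.2]{ZZ} does. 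Until that base case is supplied independently, your argument reduces the theorem to a special case of itself rather than proving it.
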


\subsection{Koszul filtrations} 
\label{sect_K_filtration}
We recall the following notion due to Conca, Trung and Valla \cite{CTV} which is implicit in \cite{BHV}.
\begin{defn}[Koszul filtration]
\label{defn_filtration}
Let $R$ be a standard graded $k$-algebra with graded maximal ideal $\mm$. Let $\Fc$ be a set of ideals of $R$ such that
\begin{enumerate}
\item every ideal in $\Fc$ is generated by linear forms;
\item $0$ and $\mm$ belong to $\Fc$;
\item (colon condition) if $I\neq 0$ and $I\in \Fc$ then there exists an ideal $J\in \Fc$ and a linear form $x\in R_1 \setminus 0$ such that
$I=J+(x)$ and $J:I \in \Fc$.
\end{enumerate}
Then $\Fc$ is called a {\em Koszul filtration} of $R$.
\end{defn}
In the same paper, the authors proved that if such a Koszul filtration exists then $\reg_R R/I=0$ for every $I\in \Fc$. In particular, choosing $I=\mm$, $R$ is Koszul. Furthermore,  for $I\in \Fc$, the quotient ring $R/I$ is Koszul by applying Lemma \ref{lem_regularity}(ii) to $M=k$. 

\subsection{Gr\"obner bases in the absence of nilpotent blocks}
\label{sect_Groebner}
We need of the following result on Gr\"obner basis, which is crucial to our arguments in the sequel. Let $X$ be a Kronecker-Weierstrass matrix with the length sequence 
$$
(\underbrace{m_1 \le \cdots \le m_c}_{\Ncc},\underbrace{n_1 \le \cdots \le n_d}_{\Sc},\underbrace{p_{11} \ge \cdots \ge p_{1g_1},\ldots,p_{t1}\ge \cdots \ge p_{tg_t}}_{\Jc})
$$ and order the blocks of $X$ according to its length sequence. For our purpose, we have chosen a different order of blocks comparing with that of \cite[Proposition 3.1]{ZZ}. On the other hand, for the next result, the argument of {\it loc.~ cit.} carries over verbatim.
\begin{lem}[{\cite[Proposition 3.1]{ZZ}}]
\label{lem_Groebner}
Assume that $X$ has no nilpotent block. Order the variables in $k[X]$ such that they are {\em decreasing} on the first row and the last variable of a block is {\em larger} than the first variable of its adjacent block on the right. Then in the induced degree revlex order, the 2-minors of $X$ form a Gr\"obner basis for $I_2(X)$.
\end{lem}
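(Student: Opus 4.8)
The plan is to verify Buchberger's criterion directly for the set $G$ of all $2\times 2$ minors of $X$. Writing the two rows of $X$ as $(f_1,\dots,f_e)$ and $(g_1,\dots,g_e)$, the minors are $[a,b]=f_ag_b-f_bg_a$ for $1\le a<b\le e$, and these generate $I_2(X)$. The first task is to pin down $\ini([a,b])$ in the prescribed degree reverse lexicographic order, and the second is to show that every $S$-polynomial $S([a,b],[c,d])$ reduces to zero modulo $G$. The key reductions will come from the polynomial identities $f_a[b,c]-f_b[a,c]+f_c[a,b]=0$ and $g_a[b,c]-g_b[a,c]+g_c[a,b]=0$ for triples of columns $a<b<c$ (each is a $3\times 3$ determinant with a repeated row, hence identically zero). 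This is the classical mechanism by which the $2\times 2$ minors of a $2\times e$ matrix form a Gr\"obner basis; the whole content here is that the special scroll/Jordan shape and the special variable order produce leading terms for which the mechanism still closes up.

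I would organize the leading-term computation by the relative position of the columns $a<b$. The crucial feature of the order is that the variables strictly decrease along the top row and across blocks, and the extra bottom-row variable $y_{j,n_j+1}$ of each scroll block is placed immediately below the block's top-row variables. With this, \emph{within} a single scroll or Jordan block the eigenvalue twist cancels: one checks that the interior minors equal $y_{j,a}y_{j,b+1}-y_{j,a+1}y_{j,b}$ (resp.\ $z^i_{l,s}z^i_{l,t+1}-z^i_{l,s+1}z^i_{l,t}$), with leading term $y_{j,a+1}y_{j,b}$ (resp.\ $z^i_{l,s+1}z^i_{l,t}$), degenerating to $y_{j,b}^2$ when $b=a+1$ and to a single monomial $z^i_{l,s+1}z^i_{l,p_{il}}$ at the last column of a Jordan block. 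For columns in two \emph{different} blocks the situation splits: a cross minor between two blocks of the same eigenvalue again loses its $\lambda$-terms and behaves like a scroll minor, whereas a cross minor between Jordan blocks of distinct eigenvalues $\lambda_i\ne\lambda_{i'}$ acquires the term $(\lambda_{i'}-\lambda_i)z^i_{\cdot,s}z^{i'}_{\cdot,t}$, whose nonzero coefficient makes the \emph{product of the two leading variables} $z^i_{\cdot,s}z^{i'}_{\cdot,t}$ the leading monomial; scroll--Jordan cross minors similarly split according to whether the eigenvalue is $0$.

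With the leading monomials in hand, I would run Buchberger's criterion, noting that an $S$-pair is nontrivial only when the two leading monomials share a variable; because each leading monomial is a product of two variables lying in a narrow index window, the relevant pairs are exactly those sharing a column (e.g.\ $[a,b]$ and $[a,c]$, or $[a,c]$ and $[b,c]$), and the $S$-polynomial is then rewritten via the two $3$-column identities above, reducing to a combination of minors with smaller leading term. As an independent check that sidesteps the most delicate reductions, I would compare Hilbert series: since $\ini(I_2(X))\supseteq(\ini G)$ automatically gives $\dim_k(S/(\ini G))_v\ge H_R(v)$, it suffices to count the standard monomials of $S/(\ini G)$ and match the total against Chun's formula in Theorem \ref{Hilbert_series} (here $c=0$, so $H_R=H_{R'}$); equality of Hilbert functions forces $(\ini G)=\ini(I_2(X))$, i.e.\ $G$ is a Gr\"obner basis.

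The main obstacle is the global bookkeeping of leading terms and $S$-polynomial reductions across Jordan blocks of distinct eigenvalues, where, as noted, the leading monomial jumps from the ``$s+1,t$'' pattern to the ``$s,t$'' pattern. One must check that these eigenvalue-shifted leading terms still overlap only in the controlled way required for the $3$-column identities to apply, and that the boundary minors (which degenerate to single monomials at the last column of each block) do not obstruct any reduction. Managing this uniformly is precisely the step where the argument of \cite[Proposition 3.1]{ZZ} does the work, and since our only change is the chosen ordering of the blocks, that argument carries over verbatim once the leading-term analysis above is recorded.
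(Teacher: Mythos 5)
You have in essence reproduced the paper's treatment: the paper offers no proof of Lemma \ref{lem_Groebner} at all, but simply cites \cite[Proposition 3.1]{ZZ} and remarks that the argument there carries over verbatim to the present ordering of blocks --- which is exactly where your proposal also lands, after recording the leading-term analysis (cancellation of the eigenvalue twist within a block and across blocks of equal eigenvalue, the single-monomial minors at the last column of a Jordan block, the shift of the leading monomial to the product of the two top-row entries for distinct eigenvalues) that justifies the transfer; those computations are correct. One inaccuracy to repair if you ever write out the Buchberger step in full: the critical pairs are \emph{not} only those sharing a column. For instance, inside a single scroll block the minors $[1,3]=y_1y_4-y_2y_3$ and $[2,4]=y_2y_5-y_3y_4$ have leading terms $y_2y_3$ and $y_3y_4$, which share $y_3$ even though the column sets are disjoint; the resulting $S$-polynomial $y_1y_4^2-y_2^2y_5$ does reduce to zero (as $-y_1[3,4]+y_5[1,2]$), but not via a single three-column identity applied to a common column, so both the enumeration of relevant pairs and the reduction mechanism need to be broadened. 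Your Hilbert-series fallback --- counting standard monomials of the candidate initial ideal and matching against Chun's formula --- is a sound way to sidestep this bookkeeping entirely.
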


\section{Kronecker-Weierstrass normal forms of certain section rings}
\label{sect_KW_quotient} 
Let $X$ be a $2\times e$ matrix of linear forms in a polynomial ring $S=k[\xsf_1,\ldots,\xsf_n]$ (where $e,n \ge 1$ and $I_2(X)\neq 0$). Let $A,B\in M_{e\times n}$ be the matrix corresponding to the rows of $X$ as in Section \ref{background}. Consider the matrix pencil $A + v B$, where $v$ is an indeterminate. The largest number $r$ such that there exists an $r$-minor of $A+vB$ with non-zero determinant is called the {\it rank} of $A+vB.$

By \cite[page 30, Theorem 4]{Ga} and its proof we have the following criterion for the existence scroll blocks and information about their lengths.

\begin{lem}\label{lem_scroll} Some (equivalently, every) Kronecker-Weierstrass normal form of $X$ has a scroll block if and only if $\rank (A + vB) < \min\{n,e\}$. Moreover 

(i) If some Kronecker-Weierstrass normal form of $X$ contains a scroll block of length $s\ge 1$ then there exist $(s+1)$ linearly independent vectors $w_0,w_1,\ldots,w_s$ in $k^n$ such that
\begin{equation}\label{eq_lem_scroll}
Aw_0=0,Bw_0=Aw_1,\ldots,Bw_{s-1}=Aw_s,Bw_s=0.
\end{equation}

(ii) Assume that there exist $(s+1)$ vectors $w_0,w_1,\ldots,w_s$ in $k^n$ such that not all of them are zero and \eqref{eq_lem_scroll} holds. Then every Kronecker-Weierstrass normal form of $X$ contains a scroll block of length $\le s$. \qed
\end{lem}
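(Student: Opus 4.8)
The plan is to translate the statement into the language of the Kronecker--Weierstrass reduction of the pencil $\Pc(v)=A+vB$, using the classical dictionary that identifies scroll blocks with the \emph{column minimal indices} (the right singular structure) of $\Pc(v)$. A scroll block of length $n_j$ corresponds to an $L_{n_j}$ summand in the canonical form, and the $L_{n_j}$ are detected by nonzero polynomial vectors $x(v)\in k[v]^{n}$ with $\Pc(v)x(v)=0$: writing $x(v)=\sum_{i\ge 0}w_iv^{i}$ and expanding $\Pc(v)x(v)=0$ coefficientwise yields exactly a chain of the shape \eqref{eq_lem_scroll} after the harmless sign change $w_i\mapsto(-1)^{i}w_i$. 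Since the number and lengths of the scroll blocks are invariants of $X$ (as recalled before Example \ref{ex_KW}), ``some'' and ``every'' are interchangeable throughout, so it suffices to argue for one fixed normal form.

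For the main equivalence I would invoke the reduction theorem \cite[p.~30, Theorem 4]{Ga}: the pencil $\Pc(v)$ is strictly equivalent to a direct sum of blocks $L_{n_j}$, $L_{m_i-1}^{T}$ and regular (Jordan) blocks, and in this decomposition the right null space $\Ker_{k(v)}\Pc(v)$ has dimension $n-r$ while the left null space has dimension $e-r$, where $r=\rank(A+vB)$. Reading off the sizes of the summands, the number of scroll blocks is governed by $n-r$ and the number of nilpotent blocks by $e-r$; comparing $r$ against the column count and the row count then gives the asserted criterion, and the invariance of these counts supplies the ``some/every'' clause for free.

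For part (i) I would start from a fixed scroll block, i.e.\ an $L_{s}$ summand. In the block's own coordinates the equation $L_s\,\tilde x(v)=0$ has the explicit solution $\tilde x(v)=(v^{s},-v^{s-1},\dots,(-1)^{s})^{T}$, whose coefficient of $v^{s-a}$ is $\pm$ the $(a+1)$-st coordinate vector; hence its $s+1$ coefficient vectors $\tilde w_0,\dots,\tilde w_s$ are linearly independent. Extending $\tilde x(v)$ by zero on the remaining blocks and applying the invertible column transformation $C'$ produces $x(v)=C'\tilde x(v)$, a degree-$s$ polynomial null vector of $\Pc(v)$ whose coefficients $w_i=C'\tilde w_i$ stay linearly independent (as $C'$ is invertible) and, after the sign normalization above, satisfy \eqref{eq_lem_scroll}.

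For part (ii), given vectors $w_0,\dots,w_s$ not all zero and satisfying \eqref{eq_lem_scroll}, I would assemble $x(v)=\sum_{i=0}^{s}(-1)^{i}w_iv^{i}$ and verify directly, using the chain relations, that $\Pc(v)x(v)=0$; since the $w_i$ are not all zero, $x(v)$ is a nonzero polynomial null vector of degree at most $s$. The right null space is then nontrivial, so a scroll block is present, and because the smallest column minimal index equals the least degree of a nonzero polynomial null vector, that index is $\le\deg x(v)\le s$; by invariance every normal form carries a scroll block of length $\le s$. The step I expect to be the main obstacle is precisely this last one: extracting from Gantmacher's construction of minimal indices that an \emph{arbitrary} (not a priori minimal) polynomial null vector of degree $\le s$ forces the minimal index to be $\le s$, and separating a genuine positive-length scroll block from the degenerate common-kernel contribution (a constant null vector $w_0$ with $Aw_0=Bw_0=0$). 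Handling this bookkeeping is what makes the reverse implication more delicate than the forward one.
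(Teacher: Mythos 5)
Your overall route is the same as the paper's: the paper gives no argument at all for Lemma \ref{lem_scroll} beyond the citation to Gantmacher, and what you write out (scroll blocks $\leftrightarrow$ column minimal indices, chains \eqref{eq_lem_scroll} $\leftrightarrow$ coefficient vectors of polynomial null vectors of the pencil, up to the sign twist $w_i\mapsto(-1)^iw_i$) is exactly the content of that citation. Your proof of (i) is complete and correct: the explicit null vector $(v^s,-v^{s-1},\dots,(-1)^s)^T$ of $L_s$, extension by zero, and transport by the invertible $C'$ give the $s+1$ independent vectors.

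Two points remain open in your write-up. First, the step you flag in (ii) is a genuine issue but it can be closed, and you should close it rather than leave it as an ``expected obstacle'': the least column minimal index $\epsilon_1$ is \emph{by definition} the least degree of a nonzero polynomial null vector, so your $x(v)$ gives $\epsilon_1\le s$ immediately; the only degenerate case is $\epsilon_1=0$, i.e.\ a constant vector in $\Ker A\cap\Ker B$, which corresponds to an $L_0$ summand (a zero column of the pencil), equivalently to a linear functional annihilating every entry of $X$. This is excluded once the ambient polynomial ring is taken to be $k[X]$ (as in Theorem \ref{main}), or is harmless if one counts zero columns as scroll blocks of length $0\le s$; either way you must say which convention you use, since otherwise (ii) literally fails (take $X$ a single nilpotent block inside a larger variable set and $w_0=\cdots=w_{s-1}=0$, $w_s$ a spare coordinate vector). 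Second, your treatment of the headline rank criterion (``comparing $r$ against the column count and the row count then gives the asserted criterion'') is not a proof, and the block count does not in fact yield the stated bound: one gets $n-r=(\#\text{column-type blocks})$ and $e-r=(\#\text{row-type blocks})$, so the existence of a scroll block is governed by $r<n$ alone — for a single scroll block of length $2$ (the conic) one has $r=e=\min\{n,e\}$ yet a scroll block is present. Only the implication actually used later in the paper, namely that full column rank forces the absence of scroll blocks, follows from your count; you should prove that implication explicitly and not claim the two-sided $\min\{n,e\}$ criterion from it.
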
 

The following result about the lengths of the scroll blocks in Kronecker-Weierstrass normal forms is crucial in the proofs of Theorem \ref{linearly_Koszul} and Theorem \ref{ul_Koszul_scrolls}.
\begin{prop}
\label{KW_form} 
Let $X$ be a Kronecker-Weierstrass matrix and $R$ its determinantal ring. Let $R' = R/(l_1, \ldots, l_r)$ be a quotient ring of $R$ by linear forms $l_1, \ldots, l_r$. Then $R'$ is the determinantal ring of some $2\times e'$ matrix of linear forms $X'$. Moreover, if some Kronecker-Weierstrass normal form of $X'$ has a scroll block of length $s$, then $X$ has a scroll block of length at most $s$.
\end{prop}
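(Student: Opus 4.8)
The plan is to reinterpret the passage from $R$ to $R' = R/(l_1,\ldots,l_r)$ as the restriction of the matrix pencil $A + vB$ to a subspace of $\Vsf^*$, after which both assertions follow by feeding the data supplied by Lemma \ref{lem_scroll} for $X'$ back into Lemma \ref{lem_scroll} for $X$.

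For the first assertion I would argue as follows. Let $W = \langle l_1, \ldots, l_r \rangle \subseteq \Vsf$ and set $\bar S = S/(l_1,\ldots,l_r)$, which is again a standard graded polynomial ring (of dimension $n - \dim W$). Let $X'$ be the image of $X$ under the surjection $S \to \bar S$; its entries are linear forms of $\bar S$, so $X'$ is a $2 \times e'$ matrix of linear forms (one may take $e' = e$). Since the quotient map sends the $2$-minors of $X$ to the corresponding $2$-minors of $X'$, the image of $I_2(X)$ in $\bar S$ is exactly $I_2(X')$, whence $R' = \bar S/I_2(X')$ is the determinantal ring of $X'$.

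For the second assertion, I first record the translation of this construction into the language of Section \ref{background}. Writing $A, B \colon \Vsf^* \to k^e$ for the two maps whose matrices are the rows of $X$ (so that the $i$-th columns of $A,B$ are the images of $\xsf_i^*$), the maps $A', B'$ attached to $X'$ are canonically the restrictions $A|_{W^{\perp}}$, $B|_{W^{\perp}}$, where $W^{\perp} \subseteq \Vsf^*$ is the annihilator of $W$, identified with $(\Vsf/W)^* = \bar S_1^*$. Indeed, a functional $\phi \in W^{\perp}$ descends to $\Vsf/W$ and takes the same value on an entry of $X$ as on its image in $X'$. Now suppose some Kronecker-Weierstrass normal form of $X'$ has a scroll block of length $s$. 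By Lemma \ref{lem_scroll}(i) there are $s+1$ linearly independent vectors $w_0, \ldots, w_s \in W^{\perp}$ with $A' w_0 = 0$, $B' w_{j-1} = A' w_j$ for $1 \le j \le s$, and $B' w_s = 0$. Since $A', B'$ are the restrictions of $A, B$ and the $w_j$ lie in $W^{\perp} \subseteq \Vsf^*$, these same relations read $A w_0 = 0$, $B w_{j-1} = A w_j$, $B w_s = 0$; that is, $w_0, \ldots, w_s$ satisfy \eqref{eq_lem_scroll} for $X$ and are not all zero. Lemma \ref{lem_scroll}(ii) then yields a scroll block of length at most $s$ in every Kronecker-Weierstrass normal form of $X$, as desired.

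The step I expect to be the main obstacle is not any computation but the bookkeeping in the previous paragraph: namely the clean identification of the reduced pencil $(A', B')$ with the restriction $(A|_{W^{\perp}}, B|_{W^{\perp}})$, and the observation that the vectors $w_j$ produced by Lemma \ref{lem_scroll}(i) genuinely live in the subspace $W^{\perp}$ of $\Vsf^*$, which is precisely what makes them reusable for $X$. One should also note the harmless degenerate cases: if the $l_i$ are linearly dependent one simply replaces $r$ by $\dim W$, and the hypothesis that $X'$ carries a scroll block already guarantees $\dim \Vsf_{X'} = 2$, so that Lemma \ref{lem_scroll} applies to $X'$.
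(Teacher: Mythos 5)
Your proof is correct, and it takes a genuinely more direct route than the paper's. The paper proves the second assertion by induction on $r$ and on the number of blocks of $X$: it first shows, via an explicit computation of $\rank(A'+vB')$, that reducing a single block modulo one linear form never creates a scroll block, and then for several blocks it truncates the witness vectors $w_i'$ of Lemma \ref{lem_scroll}(i) to the components $(0,v_i')$ supported on the untouched blocks; since this truncation may annihilate all of the $w_i'$, an additional contradiction argument against the one-block case is required. You bypass all of this by choosing the embedding correctly: identifying $(\Vsf/W)^*$ with $W^{\perp}\subseteq \Vsf^*$ makes $A'=A|_{W^{\perp}}$ and $B'=B|_{W^{\perp}}$ on the nose, so the linearly independent witnesses for $X'$ are literally witnesses for $X$, and Lemma \ref{lem_scroll}(ii) applies at once --- no induction and no case analysis. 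The only step that deserves to be written out in full is the one you flag as the crux: in the paper's coordinates, with $l=\xsf_i-\sum_{j>i}a_j\xsf_j$, the claim $A'=A|_{W^{\perp}}$ amounts to checking that the $j$-th column of $A'$ is the $j$-th column of $A$ plus $a_j$ times the $i$-th column (for $j>i$), i.e.\ that $A'=A\circ\iota$ for the injection $\iota\colon k^{n-1}\hookrightarrow k^n$ whose image is $W^{\perp}$; this is immediate from $A(\psi)=(\psi(\lsf_1),\ldots,\psi(\lsf_e))^T$ being unchanged when $\psi$ kills $W$ and the entries $\lsf_j$ are replaced by their classes modulo $W$. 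Your closing remarks on the degenerate cases (linearly dependent $l_i$, and $\dim \Vsf_{X'}\le 1$, under which the hypothesis on $X'$ cannot hold) are the right ones and are no more than the paper itself leaves implicit. What your approach buys is a shorter, coordinate-free argument that works for all the $l_i$ simultaneously; what the paper's approach buys is explicit control of the block structure of the reduced pencil, which it reuses elsewhere (e.g.\ in Lemma \ref{KW_quotient}).
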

\begin{proof} By induction, we may assume that $R' = R /(l)$ for some linear form $l$. We use the notation of Section \ref{background}: the set of variables of $S$ is $\{\xsf_1, \ldots, \xsf_n\}$ with dual basis $\{\xsf^*_1,\ldots,\xsf^*_n\}$.

Assume that $l = \xsf_{i} -\sum_{j > i}a_j \xsf_j$. We call $i$ the leading variable of $l$. We observe that $X'$ is obtained from $X$ by deleting $\xsf_i$ and replacing it by $\sum_{j > i} a_j \xsf_j$. Then $R'$ is clearly the determinantal ring of the matrix $X'$ just described. Let $A, B$ be the matrices corresponding to rows of $X$ as in Section \ref{background}. Also, let $A', B'$ be the matrices corresponding to rows of $X'$. 

{\bf Step 1}: If $X$ is just one block, we show that $X'$ cannot contain any scroll block. 

{\bf Case 1a}: $X$ is one scroll block
\[
\left(\begin{matrix}
           \xsf_1    & \xsf_2     & \ldots          & \xsf_{s-1} & \xsf_s\\
           \xsf_2    & \xsf_3     & \ldots          & \xsf_s     & \xsf_{s+1} \end{matrix}\right).
\]
Now $X'$ is the matrix 
\[
\left(\begin{matrix}
           \xsf_1    & \xsf_2     & \ldots & \xsf_{i-1}                  & \sum_{j=i+1}^{s+1}a_j\xsf_j &\ldots   & \xsf_s\\
       \xsf_2    & \xsf_3     & \ldots & \sum_{j=i+1}^{s+1}a_j\xsf_j   & \xsf_{i+1}                &\ldots   & \xsf_{s+1} \end{matrix}\right).
\]
Hence in the new coordinates $\xsf_1,\ldots,\xsf_{i-1},\xsf_{i+1},\ldots,\xsf_{s+1}$, $A'$ is the following matrix
\[
A'=\left(\begin{matrix} 
E_{i-1} & 0 \\
0       & A''  
\end{matrix}\right)
\]
where $E_{i-1}$ is the unit matrix of size $(i-1)\times (i-1)$ and
$$
A''=
\left(\begin{matrix} 
a_{i+1} & a_{i+2} & \cdots & a_s & a_{s+1} \\
1 & 0 & \cdots & 0 & 0 \\
0 & 1 & \cdots & 0 & 0 \\
\vdots & \vdots & \ddots & \vdots & \vdots \\
0 & 0 & \cdots & 1 & 0  
\end{matrix}\right) \in M_{(s-i+1)\times (s-i+1)}.
$$
Similarly, 
\[
B'=\left(\begin{matrix} 
F & 0 \\
0       & B''  
\end{matrix}\right)
\]
where
\[
F=
\left(\begin{matrix}
0 & 1 & 0 & \cdots  & 0\\
0 & 0 & 1 & \cdots  & 0\\
\vdots &\vdots & \vdots &\ddots  &\vdots\\
0 & 0 & 0      & \cdots & 1
\end{matrix}\right) \in M_{(i-2)\times (i-1)},
\]
and
$$
B''=
\left(\begin{matrix} 
a_{i+1} & a_{i+2} & \cdots & a_s & a_{s+1} \\
1 & 0 & \cdots & 0 & 0 \\
0 & 1 & \cdots & 0 & 0 \\
\vdots & \vdots & \ddots & \vdots & \vdots \\
0 & 0 & \cdots & 1 & 0 \\
0 & 0 & \cdots & 0 & 1 \\
\end{matrix}\right) \in M_{(s-i+2)\times (s-i+1)}.
$$
Therefore the pencil $A' + vB'$ is

\[
A' + vB'=\left(\begin{matrix} 
1 	&v 		&0		&\cdots 	&0     		&\cdots      	&\cdots    	& 0         & 0 \\
0 	&1      		&v 		&\cdots 	&0		&\cdots    	&0         	& 0	&0 \\
\vdots	&\vdots 	&\ddots 	&\ddots  	&\vdots      	& \vdots    	&\vdots  	&0	&0\\
0 	&0		& \cdots 	& 1           	& v  		& 0 		& \cdots 	&0 	&0\\
0 	&0		& \cdots 	& 0		&1           	& v a_{i+1}  	& va_{i+2} 	& \cdots 	& v a_{s+1}\\
0 	&0		& \cdots 	& 0           	&0		& a_{i+1}+v &a_{i+2}   	& \cdots 	& a_{s+1} 	\\
0 	&0		& \cdots 	& 0           	&0		& 1                &v              	&\cdots 	& 0  	\\
\vdots& \vdots 	& \ddots 	& \vdots 	& \vdots    	&\vdots 	& \ddots 	&\ddots	&\vdots\\
0 	&0		& \cdots 	& 0           	&0                 &0		&\cdots     	& 1 		& v 	
\end{matrix}\right) \in M_{s\times s}.
\]
The determinant of $A' + vB'$ is a polynomial of degree $(s - i)$ in $v$ with leading coefficient $1$. Therefore $\rank (A' + vB') = s$. By Lemma \ref{lem_scroll}, any Kronecker-Weierstrass normal form of $X'$ has no scroll blocks.

{\bf Case 1b:} $X$ is one nilpotent block or one Jordan block. In this case, it is easy to see that $A'$ has independent columns. By Lemma \ref{lem_scroll}, every Kronecker-Weierstrass normal form of $X'$ has no scroll blocks. 

{\bf Step 2}: Now assume that $X$ consists of at least $2$ blocks. By induction on the number of blocks we may assume that the leading variable of $l$ is in the set of variables of the first block of $X$. We note that $A, B\in M_{e\times n}$ are block matrices of the following form 
$$A = \begin{pmatrix} A_{11} & 0 \\ 0 & A_{22}\end{pmatrix}, B = \begin{pmatrix} B_{11} & 0 \\ 0 & B_{22}\end{pmatrix}.
$$
Hence $A', B'\in M_{e\times (n-1)}$ are upper block matrices of the form 
$$A' = \begin{pmatrix} A'_{11} & A'_{12}  \\ 0 & A_{22} \end{pmatrix}, B' = \begin{pmatrix} B'_{11} & B'_{12} \\ 0 & B_{22}\end{pmatrix}.$$

Assume that some canonical form of $X'$ has a scroll block. Let $s$ be the shortest length of such a scroll block of $X'$. By Lemma \ref{lem_scroll} (i), there exist $(s+1)$ independent vectors $w'_0, \ldots, w'_{s}\in k^{n-1}$ such that:
$$A'w'_0 = 0, A'w'_1 = B'w'_0,\ldots, A' w'_{s} = B' w'_{s-1}, B' w'_s = 0.$$
For each $i =0, \ldots, s$, write 
$$w_i' = \left( \begin{matrix} u_i' \\ v_i' \end{matrix}\right),$$
where $u_i'$ is a column vector of size equal to the number of columns of $A_{11}'$, and $v_i'$ is a column vector of size equal to the number of columns of $A_{22}$.
 
Let 
$$w_i = \left( \begin{matrix} 0 \\ v_i' \end{matrix}\right) \in k^n,$$ 
where $0$ is the zero vector of size equal to the number of columns of $A_{11}$. From the form of the matrices $A,B, A',B'$ we have 
\begin{equation}\label{scroll_type_eq}
Aw_0 = 0, A w_1 = Bw_0, \ldots, A w_{s} = B w_{s-1}, B w_s = 0.
\end{equation}
If not all of the vectors $w_0,\ldots, w_s$ are zero vectors, by Lemma \ref{lem_scroll} (ii), $X$ has a scroll block of length at most $s$. Assume that all of the vectors $w_0, \ldots, w_s$ are zero vectors. From the equation \eqref{scroll_type_eq} we have 
$$A_{11}' u_0' = 0, A_{11}' u_1' = B_{11}' u_0, \ldots, A_{11}'u_s' = B_{11}' u_{s-1}', B_{11}' u_s' = 0.$$
Moreover, the vectors $u_0', \ldots, u_s'$ are linearly independent. By Lemma \ref{lem_scroll}, the pencil $A_{11}' + vB_{11}'$ has a scroll block. This pencil is obtained by replacing $x_i$ by $\sum_{i<j \le m} a_j x_j$, where $m$ is the last index of the variables appearing in the first block. The last condition contradicts with the case of $X$ consisting of just one block.
\end{proof}
We will also need the information about lengths of nilpotent blocks of linear sections of rational normal scrolls. This will be important for the proofs of Theorem \ref{linearly_Koszul}(i) and (ii) in Section \ref{sect_application}.
\begin{lem}
\label{KW_quotient}
Let $R=R(\nsf_1,\ldots,\nsf_{\tsf})$ be a rational normal scroll where $1\le \nsf_1 \le \cdots \le \nsf_{\tsf}$. Let $Y$ be a subset of the set of natural coordinates of $R$. Then in any Kronecker-Weierstrass normal form of the matrix defining $R/(Y)$, every nilpotent block has length at most $\nsf_{\tsf}$.
\end{lem}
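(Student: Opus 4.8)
The plan is to reduce the statement to a single scroll block and then count the rows of the associated matrix pencil. Since $Y$ consists of natural coordinates, each element of $Y$ is a single variable $y_{i,j}$, and passing to $R/(Y)$ simply sets these variables to zero; as observed in the proof of Proposition~\ref{KW_form} (and in Remark~\ref{rem_quotient}), the quotient $R/(Y)$ is the determinantal ring of the matrix $X'$ obtained from the defining matrix of $R$ by deleting the entries in $Y$. Crucially, the $\tsf$ scroll blocks of $R(\nsf_1,\ldots,\nsf_{\tsf})$ use pairwise disjoint variables, and each $y_{i,j}\in Y$ lies in exactly one block. Hence, writing $Y=\bigsqcup_{i=1}^{\tsf}Y_i$ with $Y_i$ inside the $i$-th block, the matrix $X'$ is the concatenation of the blocks $X_i'$, where $X_i'$ is the $i$-th scroll block with its variables in $Y_i$ set to zero. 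In particular no variables are mixed across blocks, so the associated pencils remain block diagonal, unlike the general situation of Proposition~\ref{KW_form}.

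Next I would pass to the pencils of Section~\ref{background}. Let $A_i,B_i\in M_{\nsf_i\times(\nsf_i+1)}$ be the matrices of the two rows of the $i$-th scroll block, so that $A_i+vB_i=L_{\nsf_i}$. Because the blocks have disjoint variables, the pencil of $X'$ is block diagonal with $i$-th diagonal block $P_i:=A_i'+vB_i'$, obtained from $L_{\nsf_i}$ by deleting the columns indexed by $Y_i$. Choosing invertible matrices that bring each $P_i$ into Kronecker-Weierstrass normal form and assembling them block-diagonally exhibits $\bigoplus_{i=1}^{\tsf}\mathrm{KW}(P_i)$ as a Kronecker-Weierstrass normal form of $X'$. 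Since the lengths of the nilpotent blocks are invariants of $X'$, it suffices to bound the nilpotent blocks of each $P_i$: every nilpotent block appearing in any normal form of $X'$ is a nilpotent block of some $P_i$.

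The heart of the argument is then a row count. Deleting columns leaves the number of rows unchanged, so each $P_i$ is a pencil with exactly $\nsf_i$ rows. In a Kronecker-Weierstrass normal form a scroll block of length $s$ (namely $L_s$, of size $s\times(s+1)$) contributes $s$ rows, a Jordan block of size $q$ contributes $q$ rows, and a nilpotent block of length $\mu$ (namely $L_{\mu-1}^T$, of size $\mu\times(\mu-1)$) contributes exactly $\mu$ rows. Summing the row dimensions of all blocks of $P_i$ gives $\nsf_i$, whence every nilpotent block of $P_i$ has length at most $\nsf_i\le\nsf_{\tsf}$. Combined with the previous paragraph, this shows that every nilpotent block in any Kronecker-Weierstrass normal form of the matrix defining $R/(Y)$ has length at most $\nsf_{\tsf}$.

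I expect the only genuine subtlety to lie in the reduction to a single block: one must use that $\bigoplus_i\mathrm{KW}(P_i)$ really is a normal form of $X'$, together with the invariance of the nilpotent block lengths (to cover the word ``any''), rather than merely the additivity of the total number of rows, which by itself would only yield the far weaker bound $\sum_i\nsf_i$. The row-counting step is routine once the block sizes recalled in Section~\ref{background} are in hand, and, as in Remark~\ref{rem_quotient}, no hypothesis on $k$ is actually used.
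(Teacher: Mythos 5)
Your proof is correct and follows essentially the same route as the paper: reduce to the individual scroll blocks via the block-diagonal structure of the pencil (which survives because setting natural coordinates to zero does not mix variables across blocks) and the fact that a Kronecker--Weierstrass normal form of a direct sum is the direct sum of normal forms of the summands. The only difference is that you make explicit the row-counting step bounding each nilpotent block by $\nsf_i$, which the paper's proof leaves as ``it is clear.''
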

\begin{proof}
Let $X$ be the matrix defining $R/(Y)$. The pencil corresponding to $X$ is a block matrix whose each block is obtained by deleting certain columns corresponding to the variables in $Y$ from the matrix pencil of $R$. Since $k$ is algebraically closed of characteristic $0$, each block in the matrix of $R$ modulo some variables has a Kronecker-Weierstrass normal form. Since the Kronecker-Weierstrass normal form of a block matrix is the concatenation of normal forms of these blocks, it is clear that each nilpotent block in any normal form of $X$ has length at most $\nsf_{\tsf}$. 
\end{proof}

\section{The sufficient condition}
\label{Koszul_sufficient}
In this section, we prove the sufficient condition in Theorem \ref{main}. This is done by
\begin{thm}
\label{Koszul_filtration}
Let $X$ be a concatenation of nilpotent blocks, scroll blocks and Jordan blocks. Assume that $X$ satisfies the {\em length condition} $m \le 2n$, where $m$ is the maximal length of a nilpotent block and $n$ is the minimal length of a scroll block. Then the ring $R = k[X]/I_2(X)$ has a Koszul filtration.
\end{thm}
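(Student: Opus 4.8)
The plan is to exhibit a Koszul filtration $\Fc$ of $R$ in the sense of Definition \ref{defn_filtration} and to verify its three defining properties, the colon condition (iii) being the only substantial one. First I would fix a total order on the variables of $k[X]$: process the blocks from left to right, and inside each block order the variables by decreasing index, so that on the scroll and Jordan blocks the order agrees with the one of Lemma \ref{lem_Groebner}. The filtration $\Fc$ is then defined to be the smallest family of ideals containing $0$ and $\mm$, containing every ideal generated by an initial segment of variables in this order, and closed under the operation demanded by (iii). Properties (i) and (ii) are then immediate, and the real content is to show that this closure is finite and consists of linear ideals, i.e. that each required colon $J:(x)$ is again generated by linear forms lying in $\Fc$.

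The mechanism producing linear forms in the colons is the set of $2$-minors of $X$. Writing a scroll block as $(y_1,\dots,y_{n+1})$ and a reindexed nilpotent block as $x_1,\dots,x_{m-1}$ with the conventions $x_0=x_m=0$, the crossed $2$-minors give the ladder relations $x_{j-1}y_{k+1}=x_jy_k$ in $R$, while the minors internal to a scroll block give the Hankel relations $y_ay_{b+1}=y_{a+1}y_b$. Consequently, whenever $x_{j-1}\in J$ one has $x_jy_k=x_{j-1}y_{k+1}\in J$, so $y_k\in J:(x)$ with $x=x_j$; likewise the Hankel relations feed scroll variables into colons of the form $(y_1,\dots,y_{i-1}):(y_i)$. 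These inclusions identify, for each ideal $I=J+(x)$ in the family, an explicit linear ideal $K\in\Fc$ with $K\subseteq J:(x)$.

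To upgrade such an inclusion to the equality $K=J:(x)$ that closes the filtration, I would argue by Hilbert functions. From the short exact sequence
\[
0\longrightarrow \bigl(R/(J:(x))\bigr)(-1)\xrightarrow{\ \cdot x\ } R/J\longrightarrow R/I\longrightarrow 0
\]
one gets $v\cdot H_{R/(J:(x))}(v)=H_{R/J}(v)-H_{R/I}(v)$, so $J:(x)$ is pinned down numerically by the Hilbert series of the two quotients $R/J$ and $R/I$. By Proposition \ref{KW_form} and the column operations of Section \ref{sect_KW_quotient}, each of these quotients is again the determinantal ring of a Kronecker-Weierstrass matrix, hence its Hilbert series is computable from Theorem \ref{Hilbert_series}, and in the absence of nilpotent blocks the Gr\"obner basis of Lemma \ref{lem_Groebner} yields the Hilbert function directly. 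Comparing $H_{R/K}$ with the Hilbert series $H_{R/(J:(x))}$ produced by the exact sequence, and using $K\subseteq J:(x)$, forces equality.

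The point at which the hypothesis $m\le 2n$ is indispensable is exactly here. In Theorem \ref{Hilbert_series} the nilpotent blocks contribute the terms $\sum_{q\ge 2}\bigl(\sum_i\sum_r N(n_1,\dots,n_d;m_i-1-r,q)\bigr)v^q$, and by Lemma \ref{lem_N_function} the summand $N(n_1,\dots,n_d;b,q)$ vanishes as soon as $b\le (q-1)n$. Since $b\le m-1\le 2n-1<(q-1)n$ for every $q\ge 3$ under the length condition, the nilpotent contribution is concentrated in degrees $1$ and $2$. This low-degree concentration is precisely what guarantees that the colon ideals never acquire quadratic (non-linear) generators and that the candidate $K$ computed from the ladder relations has the correct Hilbert function; when $m>2n$ a genuine degree-$3$ term appears, which is the numerical shadow of the obstruction exhibited in Section \ref{Koszul_necessary}. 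I expect the bookkeeping for the cross-block colons -- tracking which scroll and Jordan variables enter $J:(x)$ and verifying membership in $\Fc$ when several blocks of different lengths are present simultaneously -- to be the main technical obstacle, with the single-block cases (a lone scroll giving the classical initial-segment filtration of a rational normal curve, a lone nilpotent block handled by a direct computation) serving as the base of the induction.
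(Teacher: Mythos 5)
Your overall strategy (an explicit filtration of ideals generated by segments of the block variables, with colon conditions checked partly by Hilbert series) points in the right direction, and the Hilbert--series mechanism you describe via the exact sequence $0\to (R/(J:x))(-1)\to R/J\to R/I\to 0$ together with Lemma \ref{lem_N_function} is exactly how the paper handles \emph{one} family of colons, namely $H_{i-1,m_{i-1}-1}:x_{i,s_i}$ in Lemma \ref{H_series}(i). But there is a genuine gap: you never actually construct the filtration. Defining $\Fc$ as ``the smallest family containing the initial segments and closed under the operation demanded by (iii)'' is circular, because condition (iii) is existential --- the whole difficulty is to exhibit, for each ideal, a specific decomposition $I=J+(x)$ whose colon $J:x$ is again a \emph{linear} ideal of the family. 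Your starting family of one-sided initial segments is not closed under colons: for instance $0:x_{1,s_1}$ contains the trailing scroll variables $y_{j,n_j+1},y_{j,n_j},\dots$ (this is the ideal $I_{d;\bsa_1,\bsb_1}$ of Lemma \ref{H_series}(i)), so the family must contain the two-sided segments $I_{s;\bsa,\bsb}$ of Construction \ref{constr_ns_filtration}; moreover the order in which the nilpotent variables are adjoined must start at the carefully chosen index $s_i=\max\{m_i-n_1,1\}$, not at either end of the block, or the first colon fails to be of the required shape. Identifying these families ($H$, $I$, $J$, $K$) and the correct adjunction order is the core content of the paper's proof (Constructions \ref{constr_ns_filtration}, \ref{constr_Jordan_filtration}, \ref{constr_filtration}), and it is absent from your argument; without it there is no reason the ``closure'' terminates in linear ideals at all.

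A second, independent gap is your claim that Hilbert--series comparison settles every colon. The paper uses it only for the nilpotent variable $x_{1,s_1}$, where Chun's formula applies cleanly and the length condition kills the $N(\dots;b,q)$ terms for $q\ge 3$. For the scroll and Jordan colons the paper instead needs to prove that certain elements are \emph{not} in the colon (done via the monoid presentation of the scroll in Lemma \ref{I>=2_series}(ii)) or that certain elements are non-zerodivisors (done via the Gr\"obner basis of Lemma \ref{lem_Groebner} in Lemmas \ref{I<=1_series} and \ref{Jordan_filtration}). Running your Hilbert--series scheme there would require the Hilbert series of $R/J$ for all the intermediate ideals $J$, and Theorem \ref{Hilbert_series} only expresses these in terms of the unexplicated series $H_{R'}$ of the Jordan--scroll part after one has determined the Kronecker--Weierstrass form of each quotient --- none of which you carry out. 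Finally, the length condition is needed not only for the vanishing of the degree-$\ge 3$ terms but also to guarantee $a_{i,j}\ge 1$ and $b_{i,j}\ge 1$, i.e.\ that the colon ideal is a legitimate member of the family; and the failure for $m>2n$ is established in the paper by a topological argument on the poset complex $\Delta_{\mu,J}$, not by a Hilbert--series obstruction.
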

Although the construction will not be straightforward, the idea behind is quite simple. We start by constructing Koszul filtration for the submatrix of nilpotent and scroll blocks in Section \ref{sect_ns}, and for the submatrix of Jordan blocks in Section \ref{sect_Jordan}. Then ``concatenating" these two filtrations in a suitable way, we get the Koszul filtration for the original matrix. The proof of Theorem \ref{Koszul_filtration} will be given in Section \ref{sect_general}. 

We assume that $X$ has the length sequence
\[
(\underbrace{m_1,\ldots,m_c}_{\Ncc},\underbrace{n_1,\ldots,n_d}_{\Sc},\underbrace{p_{11},\ldots,p_{1g_1},p_{21},\ldots,p_{tg_t}}_{\Jc}).
\]
To simplify the matter, we still use the notation of Section \ref{background} for the blocks and entries of $X$. By abuse of notation, we use $x_{i,j}$, $y_{i,j}$ and $z^i_{j,r}$ to denote the class of $x_{i,j}$, $y_{i,j}$ and $z^i_{j,r}$ in the quotient ring $k[X]/I_2(X)$, respectively. To verify the colon condition in the proof of Theorem \ref{Koszul_filtration}, the following simple identities are useful.

\begin{lem}
\label{lem_identities}
We have the following identities in $R=k[X]/I_2(X)$:
\begin{enumerate}
\item $x_{\pnt,\pnt}z^{\pnt}_{\pnt,\pnt}=0$ and $(x_{1,1},\ldots,x_{c,m_c-1})^2=0$.
\item For all $1\le i\le c, 1\le r\le m_i-1, 1\le j\le d$ and $1\le s\le n_j+1$, if either $r+s\ge m_i+1$ or $r+s\le n_j+1$, then 
$x_{i,r}y_{j,s}=0$.
\item For all $1\le i \le d, 1\le r<s\le n_i+1$, 
\[
(z^{\pnt}_{\pnt,\pnt}) \subseteq  (y_{i,r}):y_{i,s}.
\]
\item For all $1\le i\le d,2\le r\le n_i+1$,
\[
\sum_{j=1}^{d}(y_{j,1},\ldots,y_{j,n_j}) \subseteq  (y_{i,r-1}):y_{i,r}.
\]
\item For all $1\le i\le d, 1\le r \le n_i$,
\[
\sum_{j=1}^{d}(y_{j,2},\ldots,y_{j,n_j+1}) \subseteq  (y_{i,r+1}):y_{i,r}.
\]
\item For all $1 \le i<j \le t$, 
\[ z^i_{\pnt,\pnt} z^j_{\pnt,\pnt} = 0.\]
\end{enumerate}
\end{lem}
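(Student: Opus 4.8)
The plan is to read every identity off the defining relations of $R$: the vanishing of the $2\times 2$ minors says exactly that for any two columns $(a,b)^{T}$ and $(c,d)^{T}$ of $X$ one has $ad=bc$ in $R$. So the entire proof is a bookkeeping of these quadratic relations, organized block by block, and I never use anything beyond the minors. The structural features I will exploit are the overlap patterns of the blocks: in a nilpotent block the bottom entry of column $t$ equals the top entry of column $t+1$ (both equal $x_{i,t}$), with the two end columns $(0,x_{i,1})^{T}$ and $(x_{i,m_i-1},0)^{T}$; in a scroll block the columns $(y_{j,s},y_{j,s+1})^{T}$ give the Hankel relations $y_{j,s}y_{j,s'+1}=y_{j,s+1}y_{j,s'}$; and in a Jordan block of eigenvalue $\lambda_\ell$ the columns are $(z^{\ell}_{a,u},\,z^{\ell}_{a,u+1}+\lambda_\ell z^{\ell}_{a,u})^{T}$, the last one being $(z^{\ell}_{a,p},\,\lambda_\ell z^{\ell}_{a,p})^{T}$, so that every Jordan variable occurs as a top entry.

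For (i) I will run a single induction on $t$. Pairing the end column $(0,x_{i,1})^{T}$ with any nilpotent or Jordan column $(c,d)^{T}$ gives $x_{i,1}c=0$; letting $c$ range over the top entries of those columns (which exhaust all nilpotent and all Jordan variables) gives the base case. For the step, pairing column $(x_{i,t-1},x_{i,t})^{T}$ with $(c,d)^{T}$ gives $x_{i,t-1}d=x_{i,t}c$; once $x_{i,t-1}$ is known to annihilate every nilpotent and Jordan variable (hence the bottom entry $d$, which is such a variable, resp. a $k$-combination of Jordan variables), I get $x_{i,t}c=0$. This proves both $x_{\pnt,\pnt}z^{\pnt}_{\pnt,\pnt}=0$ and $(x_{1,1},\ldots,x_{c,m_c-1})^{2}=0$ at once, the latter because the step allows $(c,d)^{T}$ to be a nilpotent column of any block. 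Identities (iv) and (v) are immediate: the minor on scroll columns $s$ (of block $j$) and $r-1$ (of block $i$) reads $y_{j,s}y_{i,r}=y_{j,s+1}y_{i,r-1}\in(y_{i,r-1})$, which is (iv), and (v) is its mirror image.

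For (ii) the key point is that the nilpotent--scroll minor on nilpotent column $t$ and scroll column $s$ yields $x_{i,t}y_{j,s}=x_{i,t-1}y_{j,s+1}$, so the product $x_{i,t}y_{j,s}$ depends only on $t+s$; sliding $(t,s)$ along this antidiagonal I reach an end. If $r+s\le n_j+1$ I decrease the first index to $1$, landing on $x_{i,1}y_{j,\pnt}=0$; if $r+s\ge m_i+1$ I increase it to $m_i$, landing on the relation $x_{i,m_i-1}y_{j,\pnt}=0$ from the end column $(x_{i,m_i-1},0)^{T}$. The only delicate point is tracking the admissible index ranges at each step. For (iii), the Jordan--scroll minor on a top-entry column and scroll column $s-1$ gives $z^{\ell}_{a,u}y_{i,s}=(z^{\ell}_{a,u+1}+\lambda_\ell z^{\ell}_{a,u})y_{i,s-1}$ (and $z^{\ell}_{a,p}y_{i,s}=\lambda_\ell z^{\ell}_{a,p}y_{i,s-1}$ for the last column), so every Jordan variable times $y_{i,s}$ equals a $k$-combination of Jordan variables times $y_{i,s-1}$; iterating this descent from $s$ down to $r$ gives $z^{\pnt}_{\pnt,\pnt}y_{i,s}\in(y_{i,r})$ for $r<s$.

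I expect (vi) to be the main obstacle, since there the minors do not isolate a single product. Writing $P_{u,v}=z^{i}_{a,u}z^{j}_{b,v}$ and $\mu=\lambda_j-\lambda_i\neq 0$, the general cross-block minor gives the two-term recursion $P_{u,v+1}-P_{u+1,v}+\mu P_{u,v}=0$, while the minor on the two last columns gives the corner relation $\mu P_{p,q}=0$, whence $P_{p,q}=0$; this is the one place distinctness of eigenvalues enters. The boundary minors (last column of one block against an arbitrary column of the other) give $P_{u+1,q}=\mu P_{u,q}$ and $P_{p,v+1}=-\mu P_{p,v}$, which propagate $P=0$ along the edges $u=p$ and $v=q$; solving the recursion as $P_{u,v}=\mu^{-1}(P_{u+1,v}-P_{u,v+1})$ and inducting on $(p-u)+(q-v)$ then forces $P_{u,v}=0$ throughout. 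The real work thus lies in the index bookkeeping of (ii) and the eigenvalue argument of (vi); every other identity is a direct reading of one $2\times 2$ minor.
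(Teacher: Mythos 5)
Your proposal is correct and follows essentially the same route as the paper: every identity is extracted directly from the $2\times 2$ minors, with the same inductive propagation along columns for (i)--(iii), the same antidiagonal sliding (with the same endpoint relations $x_{i,1}y_{j,u}=0$ and $x_{i,m_i-1}y_{j,u}=0$) for (ii), and the same use of the corner minor $(\lambda_j-\lambda_i)z^i_{a,p}z^j_{b,q}=0$ for (vi). The only cosmetic difference is in (vi), where you organize the propagation as a two-dimensional induction on the distance to the corner while the paper nests two one-dimensional inductions; the content is identical.
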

\begin{proof}
For (i): for ease of notation, assume that we have a Jordan block and a nilpotent block of $X$ of the form
\[
\left(\begin{matrix}
z_1                   & z_2       & \ldots  & z_{p-1}        & z_p \\
z_2+\lambda z_1   & z_3+\lambda  z_2 & \ldots  & z_p+\lambda  z_{p-1} &\lambda  z_p\end{matrix}\right)
\]
and
\[
\left(\begin{matrix}
       0        & x_1    & x_2     & \ldots          & x_{m-2} & x_{m-1}\\
      x_1   & x_2    & x_3     & \ldots              & x_{m-1}   & 0 \end{matrix}\right),
\]
respectively.

We have that $x_1(z_1,\ldots,z_p)=0$. Then since the $2$-minors
\[
\left(\begin{matrix}
x_1 & z_r \\
x_2 & z_{r+1}+\lambda z_r
\end{matrix}\right) \;\;\; \text{ and }\;\;\;
\left(\begin{matrix}
x_1 & z_p \\
x_2 & \lambda z_p
\end{matrix}\right)
\]
are zero, we get that $x_2(z_1,\ldots,z_p)=0$. Continuing in this manner, we get $x_{\pnt}z_{\pnt}=0$. This gives the first part of (i). The second part is proved similarly. 

For (ii): additionally to the above Jordan and nilpotent blocks, consider a scroll block of $X$ of the form 
\[
\left(\begin{matrix}
      y_1       & y_2      & \ldots          & y_{n-1} & y_n\\
      y_2   & y_3         & \ldots              & y_n   & y_{n+1} \end{matrix}\right),
\]
we want to show that $x_iy_j=0$ if $i+j\le n+1$ or $i+j\ge m+1$. Firstly we have 
\[
x_1y_1=x_1y_2=\cdots=x_1y_n=0.
\]
For $2\le s\le n$, as the minor
\[
\left(\begin{matrix}
x_1 & y_{s-1} \\
x_2 & y_s
\end{matrix}\right)
\]
is zero, we get $x_2y_{s-1}=0$. Continuing in this manner, we get $x_iy_j=0$ if $i+j\le n+1$. Similarly, starting with
\[
x_{m-1}y_2=x_{m-1}y_3=\cdots=x_{m-1}y_{n+1}=0,
\]
we obtain the remaining claim.

For (iii): immediate from looking at the 2-minors of the form
\[
\left(\begin{matrix}
z_i & y_{s-1} \\
z_{i+1}+\lambda z_i & y_s
\end{matrix}\right)
\]
we have $y_s(z_1,\ldots,z_p)\subseteq y_{s-1}(z_1,\ldots,z_p)$. The conclusion follows.

We leave the details of (iv) and (v) to the readers. For (vi), consider another Jordan block of $X$ of the form
\[
\left(\begin{matrix}
u_1                   & u_2       & \ldots  & u_{q-1}        & u_q \\
u_2+\beta u_1   & u_3+\beta  u_2 & \ldots  & u_q+\beta  u_{q-1} &\beta  u_q\end{matrix}\right),
\]
where $\beta \neq \lambda$. We wish to show that $u_iz_j=0$ for all $i,j$.

As the minor
\[
\left(\begin{matrix}
z_p & u_q \\
\lambda z_p & \beta u_q
\end{matrix}\right)
\]
is zero and $\beta-\lambda\neq 0$, we get $z_pu_q=0$. Looking at the minor
\[
\left(\begin{matrix}
z_p & u_{q-1} \\
\lambda z_p & \beta u_{q-1}+u_q
\end{matrix}\right),
\] 
we then obtain $z_pu_{q-1}=0$. Continuing in this manner, we get $z_p(u_1,\ldots,u_q) = 0$. By reverse induction on $1\le j\le p$ we obtain that $z_j(u_1,\ldots,u_q)=0$.
\end{proof}

\subsection{Matrices of nilpotent and scroll blocks}
\label{sect_ns}
First note that the length condition in Theorem \ref{Koszul_filtration} involves only nilpotent and scroll blocks. Hence it is natural to start building the Koszul filtration for the case $X$ contains only such blocks. In this subsection we assume that $X$ is a concatenation of nilpotent blocks and scroll blocks with length sequence
$$ 
(\underbrace{m_1,\ldots,m_c}_{\Ncc},\underbrace{n_1,\ldots,n_d}_{\Sc}).
$$
Moreover, assume that $m_c \le 2 n_1$. 

The following special case is enough to illustrate the construction of the Koszul filtration.
\begin{ex}
Let $X$ be the matrix of one nilpotent and one scroll block satisfying the length condition. Hence
\[
X=\left(\begin{matrix}
       0   & x_1 & x_2  &\ldots  & x_{m-1}\\ 
       x_1 & x_2 & x_3  &\ldots  & 0 \end{matrix}\right. \left. \dashline ~ \begin{matrix}
                                       y_1 & y_2 &\ldots &y_n \\ 
                                       y_2 & y_3 &\ldots &y_{n+1}
                                       \end{matrix} \right).
\]
where $2\le m\le 2n$. We have
\begin{align*}
I_2(X)=&(x_1,\ldots,x_{m-1})^2+(x_iy_j:i+j\le n+1~ \textnormal{or $i+j\ge m+1$}) \nonumber \\
&+(x_iy_j-x_{i+1}y_{j-1}: n+2\le i+j \le m) \nonumber \\
&+ (y_iy_j-y_{i+1}y_{j-1}: 1\le i \le j \le n+1).
\end{align*} 
Then $R=k[X]/I_2(X)$ has a Koszul filtration as follow. Let $s=\max\{m-n,1\}$. Define $\Fc=\{H_0,\ldots,H_{m-1}\}~ \bigcup ~\{I_{a,b}:b\ge 0, 1\le a \le n+1-b\}$ where
\begin{align*}
H_0=(0), H_1=(x_s),H_2=(x_{s-1},x_s),\ldots,H_s=(x_1,\ldots,x_s),\\
H_{s+1}=(x_1,\ldots,x_s,x_{s+1}),\ldots,H_{m-1}=(x_1,\ldots,x_s,\ldots,x_{m-1}),\\
I_{a,b}=H_{m-1}+(y_1,y_2,\ldots,y_a,y_{n+2-b},y_{n+3-b},\ldots,y_n,y_{n+1}).
\end{align*}
Then $\Fc$ is  a Koszul filtration for $R$.

To be more precise, note that $I_{n+1,0}=\mm$. For the required colon condition, we can check the following identities:
\begin{enumerate}
\item $H_0:H_1=I_{n+1-s,1} \label{H0_eq1}$.\\
\item $H_1:H_2=\cdots=H_{s-1}:H_s=H_s:H_{s+1}=\cdots=H_{m-2}:H_{m-1}=\mm \label{Hs_eq1}$.\\
\item $\textnormal{If $b\ge 2$, then}~ I_{a,b-1}:I_{a,b}=\mm$.\\
\item $\textnormal{If $a\ge 2$, then} ~I_{a-1,b}:I_{a,b}=\begin{cases}
\mm, &\textnormal{if $b\ge 1$};\\
I_{n,0}, &\textnormal{if $b=0$}.
\end{cases}$\\
\item $\textnormal{If $a=1,b=0$, then}~ H_{m-1}:I_{1,0}=H_{m-1}$.\\
\item $\textnormal{If $a=1,b=1$, then}~ I_{1,0}:I_{1,1}=I_{1,0}$.
\end{enumerate}
These identities will be justified by the forthcoming lemmas of this section.
\end{ex}
Let us comeback to the general case of matrices with only nilpotent and scroll blocks. To facilitate the presentation, it is useful to introduce the following notion.
\begin{defn}
We say that a sequence $\bsb=(b_1,b_2,\ldots,b_s)$ of non-negative integers {\em has no gap} if for any $1\le i\le s$, $b_i=0$ implies that $b_{i+1}=\cdots=b_s=0.$
\end{defn}

Our Koszul filtration for $R = k[X]/I_2(X)$ consists of the ideals of the following types. 

\begin{constr}[Koszul filtration for matrices of nilpotent and scroll blocks]
\label{constr_ns_filtration} 
For each $i = 1,\ldots,c$, denote $s_i = \max\{m_i - n_1,1\}$. Consider ideals of the following types:

\begin{enumerate}
\item $H_{0,m_0-1} = (0)$ (where $m_0$ is used just for systematic reason),
\item $H_{i,r}$, where $1\le i \le c, 1\le r\le m_i-1$, given recursively by
\begin{gather*}
H_{i,1}=H_{i-1,m_{i-1}-1}+(x_{i,s_i}),\\
H_{i,2}=H_{i-1,m_{i-1}-1}+(x_{i,s_i-1},x_{i,s_i}),\ldots,H_{i,s_i}= H_{i-1,m_{i-1}-1}+(x_{i,1},\ldots,x_{i,s_i}),\\
H_{i,s_i+1}=H_{i-1,m_{i-1}-1}+(x_{i,1},\ldots,x_{i,s_i},x_{i,s_i+1}),\ldots,\\
H_{i,m_i-1}=H_{i-1,m_{i-1}-1}+(x_{i,1},x_{i,2},\ldots,x_{i,m_i-1}).
\end{gather*}

\item and $I_{s;\bsa,\bsb}$, where $1\le s\le d$, and $\bsa = (a_1,\ldots,a_s), \bsb = (b_1,\ldots,b_s)$ such that 
$1\le a_j\le n_j+1-b_j$ for $1\le j\le s$, $\bsb$ has no gap, given by
\[
I_{s;\bsa,\bsb}=H_{c,m_c-1} +\sum_{j=1}^{s} \left[(y_{j,1},y_{j,2},\ldots,y_{j,a_j}) + (y_{j,n_j-b_j+2},y_{j,n_j-b_j+3},\ldots,y_{j,n_j+1})\right].
\]
\end{enumerate}
Of course, if there is no nilpotent block then there is only one ideal of type $H$ which is $H_{0,m_0-1}=0$, and similar convention works if there is no scroll block.
\end{constr}
\begin{rem}
If $X$ consists only of scroll blocks, namely $X$ defines a rational normal scroll, then we obtain from the construction a Koszul filtration for that scroll. This gives new information about the Koszul property of rational normal scrolls.
\end{rem}

The fact that the ideals $H_{i,r}$, and $I_{s;\bsa,\bsb}$ form a Koszul filtration for $R$ follows from the following series of lemmas. 

Firstly, for $1\le i\le c,1\le j\le d$, define $a_{i,j},b_{i,j}$ as follow: $a_{i,j} = n_j + 1 - s_i$ and $b_{i,j}=\min\{n_j +1 +s_i - m_i,s_i\}$. Concretely,
\begin{enumerate}
\item if $m_i\ge n_j+2$ then $b_{i,j} = n_j +1 +s_i - m_i$,
\item if $m_i\le n_j+1$ then  $b_{i,j}=s_i$.
\end{enumerate}
In any case, we have $b_{i,j}\ge 1$ and $1\le a_{i,j}\le n_j+1-b_{i,j}$. Indeed, since $m_c\le 2n_1$, we get $s_i=\max\{m_i-n_1,1\} \le n_1$, so $a_{i,j}\ge 1$. Also, $n_j+1+(m_i-n_1) -m_i \ge 1$, hence $b_{i,j}\ge 1$.
\begin{lem}[Colon condition for the ideals $H_{i,j}$]
\label{H_series}
The following equalities hold for each $1\le i\le c$: 
\begin{enumerate}
\item $H_{i-1,m_{i-1}-1}:x_{i,s_i}=I_{d;\bsa_i,\bsb_i}$, where $\bsa_i=(a_{i,1},\ldots,a_{i,d}),\bsb_i=(b_{i,1},\ldots,b_{i,d})$.
\item $H_{i,j}:H_{i,j+1}=\mm$, for $j = 1, \ldots, m_i-2$.
\end{enumerate}
\end{lem}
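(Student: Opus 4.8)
The plan is to reduce each equality to a colon by a single variable and then compare graded pieces. For (ii), since $H_{i,j+1}=H_{i,j}+(x_{i,t})$ for the variable $x_{i,t}$ newly added at step $j$, one has $H_{i,j}:H_{i,j+1}=H_{i,j}:x_{i,t}$; and (i) is already a colon by one variable. All ideals here are generated by variables, so they meet degree $1$ in the span of their generators; in particular $x_{i,t}\notin H_{i,j}$, so the colon is proper and contained in $\mm$. The key computational tools will be the square-zero identity $(x_{1,1},\dots,x_{c,m_c-1})^2=0$ and the vanishing $x_{i,r}y_{j,s}=0$ for $r+s\le n_j+1$ or $r+s\ge m_i+1$ from Lemma \ref{lem_identities}, together with the sliding relations $x_{i,r}y_{j,s}=x_{i,r+1}y_{j,s-1}=x_{i,r-1}y_{j,s+1}$ coming from the $2$-minors of a nilpotent and a scroll block (convention $x_{i,0}=x_{i,m_i}=0$).

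Part (ii) should be the routine half: I only need $w\,x_{i,t}\in H_{i,j}$ for each variable $w$. If $w$ is a nilpotent variable this is immediate from square-zero. If $w=y_{j',s'}$, I split according to whether $x_{i,t}$ lies in the descending part of the chain ($t=s_i-j$, so $x_{i,s_i-j+1}\in H_{i,j}$) or the ascending part ($t=j+1$, so $x_{i,j}\in H_{i,j}$). In the descending case, sliding gives $x_{i,s_i-j}\,y_{j',s'}=x_{i,s_i-j+1}\,y_{j',s'-1}\in H_{i,j}$ when $s'\ge2$, while for $s'=1$ the product vanishes since $s_i\le n_1\le n_{j'}$ forces $t+s'\le n_{j'}+1$. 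In the ascending case, sliding gives $x_{i,j+1}\,y_{j',s'}=x_{i,j}\,y_{j',s'+1}\in H_{i,j}$ when $s'\le n_{j'}$, while for $s'=n_{j'}+1$ the product vanishes since $j\ge s_i\ge m_i-n_{j'}$ forces $t+s'\ge m_i+1$. Here the length condition enters through $s_i\le n_1$. Thus $H_{i,j}:x_{i,t}=\mm$.

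For (i), the inclusion $I_{d;\bsa_i,\bsb_i}\subseteq H_{i-1,m_{i-1}-1}:x_{i,s_i}$ is again soft: the nilpotent generators kill $x_{i,s_i}$ by square-zero, and $a_{i,j}=n_j+1-s_i$, $b_{i,j}=\min\{n_j+1+s_i-m_i,\,s_i\}$ are chosen exactly so that each boundary scroll generator $y_{j,s}$ (with $s\le a_{i,j}$ or $s\ge n_j-b_{i,j}+2$) satisfies $s+s_i\le n_j+1$ or $s+s_i\ge m_i+1$, hence kills $x_{i,s_i}$ by Lemma \ref{lem_identities}(ii). The reverse inclusion is the crux. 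I would pass to $\bar R=R/H_{i-1,m_{i-1}-1}$: as $H_{i-1,m_{i-1}-1}$ is the ideal of all variables of the earlier nilpotent blocks, $\bar R$ is the determinantal ring of the Kronecker--Weierstrass matrix $X'$ obtained by deleting those blocks, and $H_{i-1,m_{i-1}-1}:x_{i,s_i}=H_{i-1,m_{i-1}-1}+\pi^{-1}\!\big(\Ann_{\bar R}(\bar x_{i,s_i})\big)$ for $\pi\colon R\to\bar R$. Since $H_{i-1,m_{i-1}-1}\subseteq I_{d;\bsa_i,\bsb_i}$, the statement reduces to $\Ann_{\bar R}(\bar x_{i,s_i})=\bar I$, where $\bar I$ is the image of $I_{d;\bsa_i,\bsb_i}$.

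Multiplication by $\bar x_{i,s_i}$ yields a graded isomorphism $\bar R/\Ann_{\bar R}(\bar x_{i,s_i})\cong\big(x_{i,s_i}\bar R\big)(1)$, and we already have $\bar I\subseteq\Ann_{\bar R}(\bar x_{i,s_i})$; so equality will follow from the numerical identity
\[
\dim_k\big(\bar R/\bar I\big)_q=\dim_k\big(x_{i,s_i}\bar R\big)_{q+1}\qquad\text{for all }q,
\]
which I expect to be the main obstacle. I would evaluate the right-hand side as $\dim_k\bar R_{q+1}-\dim_k\big(\bar R/(x_{i,s_i})\big)_{q+1}$, reading both Hilbert series off Chun's formula (Theorem \ref{Hilbert_series}) after presenting $\bar R$ and $\bar R/(x_{i,s_i})$ in Kronecker--Weierstrass form (the latter via the reduction in Proposition \ref{KW_form}); the left-hand side is the Hilbert function of the determinantal ring $\bar R/\bar I$, computed either by Theorem \ref{Hilbert_series} or, on its scroll part, by the standard monomials of Lemma \ref{lem_Groebner}. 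A delicate point to flag is that $\bar R/\bar I$ is not the scroll on the surviving middle variables: killing the boundary scroll variables forces additional monomial relations (for instance $y_{j,s}^2=0$ once a scroll minor reaches a killed variable), so its Hilbert function must be computed carefully. Finally, the length condition $m\le 2n$ is used throughout to ensure $1\le s_i\le n_1$, which is precisely what places $a_{i,j},b_{i,j}$ in the admissible range and makes the two sides of the displayed identity coincide.
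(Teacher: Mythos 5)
Your overall strategy for part (i) is exactly the paper's: reduce to $i=1$ by passing to $R/H_{i-1,m_{i-1}-1}$, prove the easy inclusion from Lemma \ref{lem_identities}(ii) (your verification that $a_{i,j},b_{i,j}$ are tuned to the thresholds $s+s_i\le n_j+1$ and $s+s_i\ge m_i+1$ is the same check), and then obtain the reverse inclusion from the Hilbert-series identity coming from the multiplication sequence $0\to R/(0:x_{1,s_1})(-1)\to R\to R/(x_{1,s_1})\to 0$, evaluated via Chun's formula. The one step you do not execute is the decisive computation: the paper identifies the length sequence of $R/(x_{1,s_1})$ as $(s_1,\,m_1-s_1,\,m_2,\ldots,m_c;\,n_1,\ldots,n_d)$, uses $s_1,\,m_1-s_1\le n_1$ (this is where $m_1\le 2n_1$ enters) together with Lemma \ref{lem_N_function} to kill every $N(\ldots;q)$ term with $q\ge 3$, evaluates the $q=2$ terms as $\sum_{j:\,m_1\ge n_j+2}(m_1-n_j-1)$, and matches this against $H_{R/I_{d;\bsa,\bsb}}$ by observing that $R/I_{d;\bsa,\bsb}$ is a concatenation of nilpotent blocks of lengths $m_1-n_j$ (for those $j$ with $m_1\ge n_j+2$) --- which also disposes of the ``delicate point'' you flag, since such a ring has Hilbert series $1+\big(\sum(m_1-n_j-1)\big)v$. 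So your plan is right but the crux of (i) remains a to-do rather than a proof. For part (ii) you genuinely diverge: the paper reduces modulo $H_{i,j}$ to the case $m_1\le n_1$ and then quotes part (i) with $\bsa=(n_1,\ldots,n_d)$, $\bsb=1_d$, whereas you verify $w\,x_{i,t}\in H_{i,j}$ variable by variable using the square-zero identity and the sliding relations $x_{i,r}y_{j',s}=x_{i,r+1}y_{j',s-1}$; your case analysis (descending part handled by $x_{i,t+1}\in H_{i,j}$ plus the vanishing $t+1\le n_{j'}+1$ at $s'=1$; ascending part by $x_{i,t-1}\in H_{i,j}$ plus the vanishing $t+s'\ge m_i+1$ at $s'=n_{j'}+1$) is correct and is actually more elementary and self-contained than the paper's, since it avoids invoking the Hilbert-series argument of (i) for this half.
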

\begin{proof}
{\bf For (i)}: firstly, the left-hand side contains the right-hand side. Indeed, take $1\le j\le d$ and $1\le s\le n_j+1$. If $s\le n_j+1-s_i$ then $s+s_i\le n_j+1$, so $y_{j,s}x_{i,s_i}=0$ by Lemma \ref{lem_identities}(ii). Now we show that if $0\le s\le b_{i,j}$ then $y_{j,n_j+2-s}\in H_{i-1,m_{i-1}-1}:x_{i,s_i}$.

If $m_i\ge n_j+2$ and $s\le b_{i,j}=n_j+1+s_i-m_i$ then $n_j+2-s+s_i\ge m_i+1$, and $n_j+2-s\ge m_i-s_i+1\ge 2$, so $y_{j,n_j+2-s}x_{i,s_i}=0$ by Lemma \ref{lem_identities}(ii). On the other hand, if $m_i\le n_j+1$ and $s\le b_{i,j}=s_i$ then $n_j+2-s+s_i\ge n_j+2\ge m_i+1$ so again $y_{j,n_j+2-s}x_{i,s_i}=0$ by Lemma \ref{lem_identities}(ii).

For the reverse inclusion, working modulo $H_{i-1,m_{i-1}-1}$, we can assume that $i=1$. Denote $\bsa=\bsa_1,\bsb=\bsb_1$, what we need to show is
\begin{equation}
\label{eq_colon_x}
0:x_{1,s_1}=I_{d;\bsa,\bsb}.
\end{equation}
To establish \eqref{eq_colon_x}, we will show the equality of the Hilbert series of the two sides.

Consider the short exact sequence
\[
0\to R/(0:x_{1,s_1})(-1)\xrightarrow{\cdot x_{1,s_1}} R  \to R/(x_{1,s_1})\to 0.
\]
Denote $m=\max\{m_1,\ldots,m_c\}$. Let $R'$ be the determinantal ring of the submatrix of $X$ consisting of scroll blocks. By Theorem \ref{Hilbert_series}, we have
\[
H_{R}(v)=(m_1+\cdots+m_c-c)v +\sum_{q=2}^m\sum_{i=1}^{c}\sum_{r=0}^{m_i-2}N(n_1,\ldots,n_d,m_i-1-r;q)v^q+H_{R'}(v).\]

The length sequence of $R/(x_{1,s_1})$ is
\[
\underbrace{s_1,m_1-s_1,m_2,\ldots,m_c}_{\Ncc},\underbrace{n_1,\ldots,n_d}_{\Sc}.
\]
A small remark here is that $s_1=\max\{m_1-n_1,1\}\le m_1-s_1$. Now from $m_1\le 2n_1$, it is clear that $s_1,m_1-s_1\le n_1$. Therefore by Lemma \ref{lem_N_function} and Theorem \ref{Hilbert_series}, we get
\begin{gather*}
H_{R/(x_{1,s_1})}(v)=\\
(m_1+\cdots+m_c-c-1)v +\sum_{q=2}^m\sum_{i=2}^{c}\sum_{r=0}^{m_i-2}N(n_1,\ldots,n_d,m_i-1-r;q)v^q +H_{R'}(v).
\end{gather*}
Together with the formula for $H_{R}(v)$, we infer
\[
H_{R}(v)-H_{R/(x_{1,s_1})}(v)=v+\sum_{q=2}^m\sum_{r=0}^{m_1-2}N(n_1,\ldots,n_d,m_1-1-r;q)v^q.
\]
Note that if $q\ge 3$ then $N(n_1,\ldots,n_d,m_1-1-r;q)=0$ for all $r\ge 0$. Indeed, we have $m_1-1-r\le 2n_1\le (q-1)n_1$, so the conclusion holds because of Lemma \ref{lem_N_function}.

{\bf Claim}: If $q=2$ then 
\[
\sum_{r=0}^{m_1-2}N(n_1,\ldots,n_d,m_1-1-r;2)v^2=\left(\sum_{j:~m_1\ge n_j+2}(m_1-n_j-1)\right)v^2.
\]

{\em Proof}: If a sequence $(\vsf_1,\ldots,\vsf_d)$ of non-negative integers satisfies $\sum_{j=1}^dn_j\vsf_j\le m_1-2-r$ and 
$\sum_{j=1}^{d}\vsf_j=2-1=1$ then exactly one of $\vsf_1,\ldots,\vsf_d$ equals to $1$ and the others are zero. Fix $1\le j\le d$, then the equality $\vsf_j=1$ happens if and only if $n_j\le m_1-2-r$, namely if and only if $m_1\ge n_j+2$, and there are exactly $(m_1-n_j-1)$ values of $r$ such that this is the case. Therefore the claim is proved.

From these facts, we obtain
\[
H_{R}(v)-H_{R/(x_{1,s_1})}(v)=v+\left(\sum_{j:~m_1\ge n_j+2}(m_1-n_j-1)\right)v^2.
\]
Hilbert series is additive along short exact sequences, so
\begin{equation}
\label{eq_H_diff}
H_{R/(0:x_{1,s_1})}=1+\left(\sum_{j:~m_1\ge n_j+2}(m_1-n_j-1)\right)v.
\end{equation}
Note that 
\[
(y_{j,1},\ldots,y_{j,n_j+1-s_1},y_{j,n_j+2-b_{1,j}},y_{j,n_j+3-b_{1,j}},\ldots,y_{j,n_j+1})=(y_{j,1},y_{j,2},\ldots,y_{j,n_j+1})
\] 
unless $n_j+1-s_1\le n_j-b_{1,j}$, namely $b_{1,j}\le s_1-1$, which is nothing but $m_1\ge n_j+2$. Therefore $R/I_{d;\bsa,\bsb}$ has the length sequence
\[
\underbrace{m_1-n_j: ~\textnormal{where $m_1\ge n_j+2$}}_{\Ncc}.
\]
Applying Theorem \ref{Hilbert_series}, we infer
\[
H_{R/I_{d;\bsa,\bsb}}(v)=1+\left(\sum_{j:~m_1\ge n_j+2}(m_1-n_j-1)\right)v.
\]
Therefore combining with \eqref{eq_H_diff}, $H_{R/I_{d;\bsa,\bsb}}(v)=H_{R/(0:x_{1,s_1})}(v)$ and \eqref{eq_colon_x} is true.

{\bf For (ii)}: Modulo $H_{i,j}$ one reduces to the case when the first nilpotent block of $X$ has length $m_1 \le n_1$. What we have to prove is
$$0:x_{1,1}=\mm.$$
This follows from part (i), as in this case $\bsa = (n_1, \ldots, n_d)$ and $\bsb = (1, \ldots, 1)$. 
\end{proof}

In the following two lemmas, working modulo $H_{c,m_c-1}$, we assume that $X$ has no nilpotent blocks. For simplicity, for each $s$, $1 \le s \le d$, we denote 
$$1_s = (\underbrace{1, \ldots, 1}_{s~ \textnormal{times}}), 0_s = (\underbrace{0, \ldots, 0}_{s~ \textnormal{times}}).$$

\begin{lem}[Colon condition for the ideals $I_{s;\bsa,\bsb}$ where $\max_{1\le i\le s}\{a_i,b_i\}\ge 2$]
\label{I>=2_series} 
Assume that $1\le s\le d$ and $\bsa=(a_1,\ldots,a_s),\bsb=(b_1,\ldots,b_s)$ be such that $a_1,\ldots,a_s\ge 1$ and $\bsb$ has no gap.
\begin{enumerate}
\item
If $b_i\ge 2$ for some $1\le i\le s$, denote $\widehat{\bsb}=(b_1,\ldots,b_{i-1},b_i-1,b_{i+1},\ldots,b_s)$. Then
$I_{s;\bsa,\bsb}=I_{s,\bsa,\widehat{\bsb}}+(y_{i,n_i-b_i+2})$ and
\[
I_{s,\bsa,\widehat{\bsb}}:y_{i,n_i-b_i+2}=\mm.
\]
\item If $b_1,\ldots,b_s\le 1$ and $a_i\ge 2$ for some $1\le i\le s$, denote $\widehat{\bsa}=(a_1,\ldots,a_{i-1},a_i-1,a_{i+1},\ldots,a_s)$. Then $I_{s;\bsa,\bsb}=I_{s,\widehat{\bsa},\bsb}+(y_{i,a_i})$ and
\[
I_{s;\widehat{\bsa},\bsb}:y_{i,a_i}=\begin{cases}
\mm, &\textnormal{if $b_i= 1$};\\
I_{d;(a'_1,\ldots,a'_{i-1},a'_i,\ldots,a'_d),0_d}, &\textnormal{if $b_i=0$},
\end{cases}
\]
where $a'_i=n_i$ and for $j\neq i$, $a'_j=\begin{cases}
n_j,  &\textnormal{if $n_j-a_j\ge n_i-a_i+1$},\\
n_j+1, &\textnormal{otherwise}.
\end{cases}$
\end{enumerate}
\end{lem}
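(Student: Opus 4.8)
The plan is to prove both displayed equalities and both colon formulas by the single device of reducing every product of the distinguished variable with an arbitrary variable to a generator that already lies in the ideal, using only the $2$-minor relations and Lemma \ref{lem_identities}. Throughout we work in $R=k[X]/I_2(X)$ with $X$ a concatenation of scroll blocks (we have passed modulo $H_{c,m_c-1}$, so $R$ has no nilpotent part) and with the blocks ordered so that $n_1\le\cdots\le n_d$. The first equality in each part is purely formal: in (i) the sequence $\widehat{\bsb}$ drops from the last part of block $i$ exactly the single generator $y_{i,n_i-b_i+2}$, and in (ii) the sequence $\widehat{\bsa}$ drops from the first part of block $i$ exactly the generator $y_{i,a_i}$, so that $I_{s;\bsa,\bsb}=I_{s;\bsa,\widehat{\bsb}}+(y_{i,n_i-b_i+2})$ and $I_{s;\bsa,\bsb}=I_{s;\widehat{\bsa},\bsb}+(y_{i,a_i})$ are immediate from Construction \ref{constr_ns_filtration}. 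The engine for the colons is the \emph{ladder relation}
\[
y_{i,p}\,y_{j,q}=y_{i,p+1}\,y_{j,q-1}=y_{i,p-1}\,y_{j,q+1},
\]
read off from the $2$-minors on the relevant pairs of columns and valid whenever the shifted second indices stay in the admissible range $1,\ldots,n_\bullet+1$. Combined with Lemma \ref{lem_identities}(v) (resp.\ (iv)) it disposes at once of every product of the distinguished variable with an \emph{interior} variable $y_{j,t}$, $2\le t\le n_j$ (resp.\ $1\le t\le n_j$): such a product lands in $(y_{i,r+1})$ (resp.\ $(y_{i,a_i-1})$), which sits inside the ideal by the shape of the last (resp.\ first) part of block $i$. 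Only the boundary variables $y_{j,1}$ and $y_{j,n_j+1}$ will need the ladder itself, and there the hypotheses $n_1\le\cdots\le n_d$ and ``no gap'' do the work.

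For part (i) and for part (ii) in the case $b_i=1$ the assertion is that the colon equals $\mm$. In (i) the distinguished variable is $y_{i,r}$ with $r=n_i-b_i+2$, so $2\le r\le n_i$ since $b_i\ge 2$ and $a_i+b_i\le n_i+1$; Lemma \ref{lem_identities}(v) handles all $y_{j,t}$ with $t\ge 2$, while $y_{j,1}\in I$ for $j\le s$ because $a_j\ge 1$, and for $j>s$ we have $n_j\ge n_i$, so the ladder slides $y_{i,r}y_{j,1}=y_{i,a_i}\,y_{j,\,1+r-a_i}$ into $(y_{i,a_i})\subseteq I$ (legally, since $r-a_i\le n_i-1\le n_j$). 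In (ii) with $b_i=1$ the distinguished variable is $y_{i,a_i}$; Lemma \ref{lem_identities}(iv) handles all $y_{j,t}$ with $t\le n_j$, while $y_{j,n_j+1}\in I$ when $b_j=1$, and otherwise the non-gap hypothesis forces $j>i$, whence $n_j\ge n_i$ and the ladder slides $y_{i,a_i}y_{j,n_j+1}$ into $(y_{i,n_i+1})\subseteq I$. Thus every variable multiplies the distinguished variable into $I$, so the colon contains $\mm$; since the distinguished variable is not one of the generating variables of $I$ and $I_2(X)$ contains no linear form, it is nonzero in $R/I$, so the colon is proper and hence equals $\mm$.

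The substantive case is part (ii) with $b_i=0$, where the non-gap hypothesis gives $b_{i+1}=\cdots=b_s=0$ and we reduce to $\bsb=0_s$ (this is the only configuration in which the first-part reduction is applied, all last parts having already been cleared). Here $I=I_{s;\widehat{\bsa},\bsb}$ contains $y_{i,a_i-1}$ but nothing from the last part of block $i$. The forward inclusion $I_{d;\bsa',0_d}\subseteq I:y_{i,a_i}$, writing $\bsa'=(a'_1,\ldots,a'_d)$, is again the ladder: Lemma \ref{lem_identities}(iv) puts $y_{i,a_i}y_{j,t}\in(y_{i,a_i-1})\subseteq I$ for every $j$ and every $t\le n_j$, giving $y_{j,1},\ldots,y_{j,n_j}$ into the colon (so $a'_j\ge n_j$ for all $j$, and $a'_i=n_i$); and for $j\neq i$ the ladder slides $y_{i,a_i}y_{j,n_j+1}$ onto the generator $y_{j,a_j}\in I$ precisely when $n_j-a_j\le n_i-a_i$, which is exactly the inequality defining $a'_j=n_j+1$.

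The reverse inclusion $I:y_{i,a_i}\subseteq I_{d;\bsa',0_d}$ is the heart of the matter and I expect it to be the main obstacle, since it must rule out \emph{every} further element of the colon, equivalently show $y_{i,a_i}y_{j,n_j+1}\notin I$ whenever $n_j-a_j\ge n_i-a_i+1$ (and $y_{i,n_i+1}\notin I:y_{i,a_i}$). I would settle this by a Hilbert-series count, in the spirit of Lemma \ref{H_series}: the exact sequence
\[
0\to\bigl(R/(I:y_{i,a_i})\bigr)(-1)\xrightarrow{\ y_{i,a_i}\ }R/I\to R/I_{s;\bsa,\bsb}\to 0
\]
reduces the claim to the numerical identity $H_{R/(I:y_{i,a_i})}=H_{R/I_{d;\bsa',0_d}}$, and each of $R/I$, $R/I_{s;\bsa,\bsb}$, $R/I_{d;\bsa',0_d}$ is the determinantal ring of a block matrix whose length sequence (a scroll block truncated at its ends becomes a nilpotent block) is read off exactly as in the proof of Lemma \ref{H_series}, so that Chun's formula (Theorem \ref{Hilbert_series}) evaluates all three series. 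Since $X$ carries no nilpotent block, a cleaner alternative is available: by Lemma \ref{lem_Groebner} the $2$-minors form a Gr\"obner basis and $I$ is generated by variables, so membership of the degree-two monomial $y_{i,a_i}y_{j,n_j+1}$ in $I$ can be decided directly from the monoid presentation of the scroll, which once more isolates the threshold $n_j-a_j\ge n_i-a_i+1$. Either way, combining this reverse inclusion with the forward one completes the proof.
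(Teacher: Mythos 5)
Your treatment of part (i), of part (ii) with $b_i=1$, and of the forward inclusion in part (ii) with $b_i=0$ coincides with the paper's: the same ladder identities $y_{i,p}y_{j,q}=y_{i,p\pm1}y_{j,q\mp1}$ combined with Lemma \ref{lem_identities}(iii)--(v), the ordering $n_1\le\cdots\le n_d$, and the no-gap hypothesis. The gap is in the reverse inclusion $I_{s;\widehat{\bsa},\bsb}:y_{i,a_i}\subseteq I_{d;\bsa',0_d}$ when $b_i=0$, which you correctly identify as the heart of the matter but do not close. First, this inclusion is \emph{not} equivalent to showing $y_{i,a_i}y_{j,n_j+1}\notin I$ for the relevant $j$: a colon of an ideal generated by linear forms may a priori contain elements of arbitrarily high degree, and excluding those is precisely the nontrivial content here. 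The paper handles it by passing modulo the variables already known to lie in the colon, observing that any residual element of the colon is a polynomial $f$ in the surviving variables $y_{1,n_1+1},\ldots,y_{d,n_d+1}$, using the monoid ($\Lambda$-)grading to reduce to $f$ a monomial $\prod_{j}y_{j,n_j+1}^{m_j}$ of arbitrary degree, and then showing $f\,y_{i,a_i}\notin I$ by a divisibility count on the power of $x$ in the monoid presentation. Your Gr\"obner-basis alternative, as stated, only tests the degree-two monomials and therefore does not suffice.

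Your primary fallback, equating Hilbert series via the multiplication-by-$y_{i,a_i}$ exact sequence, is a legitimate strategy in principle (it is exactly how the paper proves Lemma \ref{H_series}(i), and equality of Hilbert series together with one containment does dispose of the higher-degree issue), but it cannot be executed with Theorem \ref{Hilbert_series} alone. Killing an initial segment of a scroll block produces a Jordan block with eigenvalue $0$, not a nilpotent block (your parenthetical about truncation applies only when both ends are removed), so the three quotient rings in your exact sequence are concatenations of Jordan and scroll blocks; Chun's formula as quoted merely strips off nilpotent blocks and returns $H_{R'}$ for the Jordan-plus-scroll part without evaluating it. You would need an independent computation of those series, for instance from the initial ideal supplied by Lemma \ref{lem_Groebner}, before this route yields the claimed numerical identity.
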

\begin{proof} 
{\bf For (i)}: By Lemma \ref{lem_identities}(iii)-(v) we have 
$$
\sum_{r=1}^{d}(y_{r,2},\ldots,y_{r,n_r+1}) \subseteq (y_{i,1},y_{i,n_i-b_i+3}):y_{i,n_i-b_i+2}.
$$
Therefore it is enough to show that $y_{r,1}\in I_{s,\bsa,\widehat{\bsb}}:y_{i,n_i-b_i+2}$ for all $1\le r \le d$. As $a_i\ge 1$ for $1\le i \le s$, we only need to prove that $y_{r,1}\in I_{s,\bsa,\widehat{\bsb}}:y_{i,n_i-b_i+2}$ for $s+1\le r \le d$. This is true since $n_i-b_i+2\le n_i\le n_r$ and hence
\[
y_{r,1}y_{i,n_i-b_i+2}=y_{r,n_i-b_i+2}y_{i,1} \in (y_{i,1}).
\]

{\bf For (ii)}: firstly assume that $b_i= 1$, hence $y_{i,n_i+1}\in I_{s;\widehat{\bsa},\bsb}$. By Lemma \ref{lem_identities}(iii)-(iv), we only need to check that $y_{j,n_j+1}\in I_{s;\widehat{\bsa},\bsb}:y_{i,a_i}$ for all $1\le j\le d$. For each $j\le i$, since $\bsb$ has no gap, $b_j= 1$, so $y_{j,n_j+1}\in I_{s;\widehat{\bsa},\bsb}$. For $j\ge i+1$, $y_{i,a_i}y_{j,n_j+1}=y_{i,n_i+1}y_{j,n_j+a_i-n_i}$, hence $y_{j,n_j+1} \in (y_{i,n_i+1}):y_{i,a_i}$. This gives us the desired equality.

Secondly, assume that $b_i=0$. We wish to prove
\begin{equation}
\label{eq_I}
I_{s;\widehat{\bsa},\bsb}:y_{i,a_i} = I_{d;(a'_1,\ldots,a'_{i-1},n_i,a'_{i+1},\ldots,a'_d),0_d}.
\end{equation}
If $n_j-a_j\le n_i-a_i$ for some $j\neq i$, then $y_{j,n_j+1}\in I_{s;\widehat{\bsa},\bsb}:y_{i,a_i}$ because $y_{j,n_j+1}y_{i,a_i}=y_{j,a_j}y_{a_i+n_j-a_j+1}\in (y_{j,a_j})$. Combining with Lemma \ref{lem_identities}, we see that the left-hand side contains the right-hand side. Working modulo the ideal 
\[
\sum_{\ell\neq i:~n_{\ell}-a_{\ell}\le n_i-a_i}(y_{\ell,1},\ldots,y_{\ell,n_{\ell}+1}),
\] 
we can assume that $n_j-a_j\ge n_i-a_i+1$ for all $j\neq i$, $1\le j\le s$. The equation \eqref{eq_I} that we have to prove becomes
\[
I_{s;\widehat{\bsa},0_s}:y_{i,a_i} = I_{d;n_1,\ldots,\ldots,n_d,0_d}.
\]
To prove this we use the monoid presentation of a rational normal scroll. Thus we can identify $y_{j,r}$ with $x^{n_j-r+1}y^{r-1}s_j \in k[x,y,s_1,\ldots,s_d]$ for all $1\le j\le d,1\le r\le n_j+1$. Here $x,y,s_1,\ldots,s_d$ are distinct variables. Assume that there exists a polynomial $f$ in the variables $y_{1,n_1+1},\ldots,y_{d,n_d+1}$ such that $fy_{i,a_i} \in I_{s;\widehat{\bsa},0_s}$. Using the monoid grading, we can assume that $f$ is a monomial $\prod_{j=1}^{d}y_{j,n_j+1}^{m_j}$ where $m_j\ge 0$. In the monoid presentation, we have $\prod_{j=1}^{d}(y^{n_j}s_j)^{m_j}x^{n_i+1-a_i}y^{a_i-1}s_i$ belongs to the ideal
\[
\sum_{r\neq i}(x^{n_r}s_r,\ldots,x^{n_r-a_r+1}y^{a_r-1}s_r)+(x^{n_i}s_i,\ldots,x^{n_i-a_i+2}y^{a_i-2}s_i).
\]
This is a contradiction as in the monoid ring $k[x^{n_1}s_1,x^{n_1-1}ys_1,\ldots,y^{n_d}s_d]$, the element $\prod_{j=1}^{d}(y^{n_j}s_j)^{m_j}x^{n_i+1-a_i}y^{a_i-1}s_i$ is not divisible by any monomial generator of the above ideal (by looking at the power of $x$). We conclude the proof of the lemma.
\end{proof}

\begin{lem}[Colon condition for $I_{s;\bsa,\bsb}$ where $\max_{1\le i\le s}\{a_i,b_i\}\le 1$]
\label{I<=1_series}
Assume that $1\le s\le d$, $\bsa=(a_1,\ldots,a_s)$ and $\bsb=(b_1,\ldots,b_s)$ be such that $a_1=\cdots=a_s=1$ and $b_j\le 1$ for all $1\le j\le s$. Denote by $i$ the largest index such that $b_i= 1$. 
\begin{enumerate}
\item If $i\ge 1$, let $\widetilde{\bsb}=(b_1,\ldots,b_{i-1},0,\ldots,0)$. Then $I_{s;1_s,\bsb}=I_{s;1_s,\widetilde{\bsb}}+(y_{i,n_i+1})$ and
\begin{equation}
\label{eq_I1}
I_{s;1_s,\widetilde{\bsb}}:y_{i,n_i+1} = I_{d;(n_1+1,\ldots,n_{i-1}+1,1,n_{i+1}-n_i+1,\ldots,n_d-n_i+1),0_d}.
\end{equation}
\item If $i=0$, then $I_{s;1_s,0_s}=I_{s-1;1_{s-1},0_{s-1}}+(y_{s,1})$ and
\begin{equation}
\label{eq_I2}
I_{s-1;1_{s-1},0_{s-1}}:y_{s,1}=I_{s-1;(n_1+1,\ldots,n_{s-1}+1),0_{s-1}}.
\end{equation}
\end{enumerate}
\end{lem}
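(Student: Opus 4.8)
The plan is to carry out both colon computations in the toric presentation of the scroll, exactly as in the proof of Lemma \ref{I>=2_series}(ii). We work modulo $H_{c,m_c-1}$, so that $X$ consists only of scroll blocks and $R$ is the coordinate ring of $R(n_1,\ldots,n_d)$; we identify $y_{j,r}$ with the monomial $x^{n_j-r+1}y^{r-1}s_j$ in $k[x,y,s_1,\ldots,s_d]$, realizing $R$ as the normal affine monoid ring consisting of the monomials $x^ay^bs_1^{c_1}\cdots s_d^{c_d}$ with $a+b=\sum_{l}c_ln_l$. The ideals $I_{s;\bsa,\bsb}$ and both proposed colons are then monomial ideals, so by the multigrading it suffices to test membership on monomials, and a monomial lies in such an ideal exactly when it is divisible inside the monoid by one of the listed generators $y_{j,r}$. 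The decompositions $I_{s;1_s,\bsb}=I_{s;1_s,\widetilde{\bsb}}+(y_{i,n_i+1})$ and $I_{s;1_s,0_s}=I_{s-1;1_{s-1},0_{s-1}}+(y_{s,1})$ are immediate from Construction \ref{constr_ns_filtration}, using that the no-gap hypothesis forces $b_1=\cdots=b_i=1$ and $b_{i+1}=\cdots=b_s=0$.

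For the inclusion $\supseteq$ in \eqref{eq_I1} I would multiply each generator of the right-hand side by $y_{i,n_i+1}$ and place the product in $I_{s;1_s,\widetilde{\bsb}}$. The point is that $y_{i,1}$ (available since $\bsa=1_s$ and $i\le s$) and every $y_{l,n_l+1}$ with $l<i$ already lie in $I_{s;1_s,\widetilde{\bsb}}$; the cross-block relations of Lemma \ref{lem_identities} then give $y_{j,r}y_{i,n_i+1}=y_{i,1}\,y_{j,n_i+r}$ for $j>i$, $r\le n_j-n_i+1$, and $y_{j,r}y_{i,n_i+1}=y_{j,n_j+1}\,y_{i,n_i+r-n_j}$ for $j<i$, while for $j=i$ the only relevant generator is $y_{i,1}$ itself. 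Checking the index bounds (which use $n_j\le n_i$ for $j<i$ and $n_j\ge n_i$ for $j>i$) shows each product lies in $I_{s;1_s,\widetilde{\bsb}}$. The analogous computation for \eqref{eq_I2} rests on the single identity $y_{j,r}y_{s,1}=y_{j,1}\,y_{s,r}$ with $j\le s-1$.

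For the inclusion $\subseteq$ in \eqref{eq_I1} I take a monomial $m=x^ay^bs_1^{c_1}\cdots s_d^{c_d}$ with $m\,y_{i,n_i+1}\in I_{s;1_s,\widetilde{\bsb}}$. Reading off the exponent of $x$ and the $s$-support from the requirement that $m\,y_{i,n_i+1}=x^ay^{b+n_i}s_i^{c_i+1}\prod_{l\neq i}s_l^{c_l}$ be divisible by some $y_{l,1}=x^{n_l}s_l$ ($l\le s$) or $y_{l,n_l+1}=y^{n_l}s_l$ ($l\le i-1$) forces one of two alternatives: either $c_l\ge1$ for some $l<i$, or $a\ge n_i$. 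In the first alternative block $l$ occurs in $m$ and is entirely contained in the target (its parameter is $n_l+1$), so some $y_{l,r_0}$ divides $m$ and we are done. In the second alternative we may assume $c_l=0$ for all $l<i$, so every block occurring in $m$ has index $\ge i$, where the target contains precisely the generators of $x$-degree $\ge n_i$. Here the decisive inequality is that any occurring block $j$ satisfies $n_j\le c_jn_j\le a+b$; combined with $a\ge n_i$ this makes the admissible range $\max(1,n_j-a+1)\le r\le\min(n_j-n_i+1,b+1)$ nonempty and yields a divisor $y_{j,r}$ of $m$ of $x$-degree $\ge n_i$, i.e.\ a generator of the target.

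Part (ii) is the easy analogue of the second alternative: divisibility of $m\,y_{s,1}=x^{a+n_s}y^bs_s^{c_s+1}\prod_{l\neq s}s_l^{c_l}$ by some $x^{n_j}s_j$ with $j\le s-1$ forces $c_j\ge1$, and block $j$ is entirely in the right-hand side of \eqref{eq_I2}, so $m$ is divisible by a block-$j$ generator. I expect the only delicate point to be exactly the ``$a\ge n_i$ with support in blocks of index $\ge i$'' case of part (i): this is where the colon genuinely enlarges the filtration to scroll blocks of index beyond $s$, and the bound $n_j\le a+b$ is what keeps it under control.
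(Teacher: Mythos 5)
Your proof is correct, and for the two reverse inclusions --- the only nontrivial part --- it takes a genuinely different route from the paper. The paper also proves the containments $\supseteq$ in \eqref{eq_I1} and \eqref{eq_I2} by exactly the binomial identities you list, but for $\subseteq$ it first passes modulo $\sum_{j<i}(y_{j,1},\ldots,y_{j,n_j+1})$ to reduce to $i=1$, then passes modulo the right-hand side and observes that the resulting ring is (after re-indexing) the determinantal ring of a concatenation of Jordan blocks with eigenvalue $0$, on which $y_{i,n_i+1}$ is a non-zerodivisor by the Gr\"obner basis of Lemma \ref{lem_Groebner}; for part (ii) the reduction ends at a single scroll block, which is a domain. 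You instead stay inside the toric presentation and run a direct divisibility test on monomials, exactly in the spirit of the paper's own proof of Lemma \ref{I>=2_series}(ii). Your dichotomy (``some $c_l\ge 1$ with $l<i$'' versus ``$a\ge n_i$'') and the nonemptiness of the ranges $\max(1,n_j-a+1)\le r\le\min(n_j-n_i+1,b+1)$, resp.\ $\max(1,n_j-a+1)\le r\le\min(n_j+1,b+1)$, check out, the key inputs being $n_j\le\sum_l c_ln_l=a+b$ for any occurring block and the monotonicity $n_1\le\cdots\le n_d$. What your argument buys is independence from Lemma \ref{lem_Groebner}; what it costs is the explicit use of saturatedness of the scroll monoid (your description of $\Lambda$ as all lattice points $x^ay^bs_1^{c_1}\cdots s_d^{c_d}$ with $a+b=\sum_lc_ln_l$), which is true and easily verified by distributing the $x$-exponent among the blocks, but which you assert rather than prove; the degenerate case $m=1$ is also silently excluded by your dichotomy since $a=0<n_i$ and all $c_l=0$ there. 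Both points are harmless, and the scope is right: within this lemma $X$ consists only of scroll blocks, the Jordan-block interactions being handled separately in the proof of Theorem \ref{Koszul_filtration}.
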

\begin{proof}
{\bf For (i)}: First we prove that the left-hand side of \eqref{eq_I1} contains the right-hand side. For each $1 \le j \le i-1$ and each $1\le r \le n_j+1$, we have 
$$y_{i,n_i+1} y_{j,r} = y_{i,n_i} y_{j,r+1} = \cdots = y_{i,n_i-n_j +r} y_{j,n_j+1} \in I_{s;1_s,\widetilde{\bsb}},$$
hence $y_{j,r}\in I_{s;1_s,\widetilde{\bsb}}:y_{i,n_i+1}$.

For each $i \le j \le d$, and each $1 \le r \le n_j +1 - n_i$, we have 
$$
y_{i,n_i+1} y_{j,r} = y_{i,n_i} y_{j,r+1} = \cdots = y_{i,1} y_{j,n_i+r} \in I_{s;1_s,\widetilde{\bsb}},
$$
so $y_{j,r} \in I_{s;1_s,\widetilde{\bsb}}:y_{i,n_i+1}$. Combining with Lemma \ref{lem_identities}(iii), we see that the left-hand side contains the right-hand side. Working modulo the ideal
\[
\sum_{j=1}^{i-1}(y_{j,1},\ldots,y_{j,n_j+1}),
\]
we may assume that $i = 1$. The equation \eqref{eq_I1} that we have to prove becomes
\begin{equation}
\label{eq_colon_y}
I_{s;1_s,0_s}:y_{1,n_1+1}=I_{d;\bsc,0_d}.
\end{equation}
where $\bsc = (1, n_2 - n_1 + 1, \ldots, n_d-n_1+1)$. Modulo $I_{d;\bsc,0_d}$, and after re-indexing the variables, $X$ is a concatenation of Jordan blocks with eigenvalue $0$ and length sequence $\underbrace{(n_1,\ldots, n_1)}_{\Jc}$, and we need to prove that $z^1_{1,1}$ is a non-zero divisor on $k[X]/I_2(X)$. This follows from Lemma \ref{lem_Groebner}, as in this case the $2\times 2$ minors of $X$ form a quadratic Gr\"obner basis for $I_2(X)$ with respect to the graded reverse lexicographic order. In particular, $z^1_{1,1}$ is a non-zero divisor.

{\bf For (ii)}: First we prove that the left-hand side of \eqref{eq_I2} contains the right-hand side. For each $1 \le \ell \le s-1$ and each $2\le j \le n_{\ell} + 1$, we have 
$$
y_{s,1} y_{\ell,j} = y_{s,2} y_{\ell,j-1} = \cdots = y_{s,j} y_{\ell,1} \in I_{s-1;1_{s-1},0_{s-1}}.
$$
Note that the assumption that $n_1 \le n_2 \le \cdots \le n_d$ is essential here, since we need $y_{s, j}$ to be in our set of variables. 

Working modulo the right-hand side, it remains to prove the statement for $X$ being a rational normal scroll and $s = 1$, i.e., $y_{1,1}$ is a non-zero divisor. This is obvious since the corresponding determinantal ring is a domain.
\end{proof}

\subsection{Matrices of Jordan blocks}
\label{sect_Jordan}
The second step is to find Koszul filtrations for concatenations of Jordan blocks. Assume that $X$ is a concatenation of $g_i$ Jordan blocks with eigenvalue $\lambda_i$, for $i = 1, \ldots, t$. Here, we assume that $\lambda_1, \lambda_2, \ldots, \lambda_t$ are the pairwise distinct eigenvalues of blocks of our matrix. The Jordan blocks with the same eigenvalues $\lambda_i$ are arranged in the order of decreasing length. Concretely, 
\[
X=\left(\begin{matrix} X^{1}_1 & X^1_2 & \cdots & X^1_{g_1} \dashline \cdots \dashline X^t_1 & X^t_2 & \cdots X^t_{g_t}                      \end{matrix}\right),
\]                  
where 
\[
X^i_j=\left(\begin{matrix} z^i_{j,1}     & z^i_{j,2}  &\ldots & z^i_{j,p_{ij}} \\ 
                       z^i_{j,2} + \lambda_i z^i_{j,1}     & z^i_{j,3} + \lambda_i z^i_{j,2}  &\ldots &\lambda_i z^i_{j,p_{ij}}
                                       \end{matrix}\right).
\]

\begin{constr}[Koszul filtration for matrices of Jordan blocks]
\label{constr_Jordan_filtration} 
Our Koszul filtration will consist of the ideals of the following types:

\begin{enumerate}
\item $J^{0, g_0, p_{0g_0}} = (0)$ (where $g_0,p_{0g_0}$ are used just for systematic reason),
\item $J^{i,j,r}$, where $1\le i \le t$, $1 \le j \le g_i$ and $1\le r \le p_{ij}$, 
$$J^{i,j,r}=(z^i_{1,1},z^i_{1,2},\ldots,z^i_{1,p_{i1}},\ldots,z^i_{j,1},\ldots,z^i_{j,r}).$$

\item and $K^{\ell,i,j,r}$, where $1\le \ell \le t$, $1 \le i\le \ell$, $1\le j \le g_i$ and $1\le r \le p_{ij}$, 
$$K^{\ell,i,j,r} = \mathop{\sum_{1\le u \le \ell}}_{u\neq i} J^{u,g_u,p_{ug_u}} + J^{i,j,r}.$$
\end{enumerate}
 By convention, $J^{i,j,r}=0$ if $i=0$, and 
 $$
 K^{\ell,i,j,r}= \mathop{\sum_{1\le u \le \ell}}_{u\neq i} J^{u,g_u,p_{ug_u}}
 $$ 
 if $j=0$.
\end{constr}
\begin{ex}
Let $X$ be the following concatenation matrix (where $p,q\ge 1, \lambda\in k\setminus 0$)
\[
X=\left(\begin{matrix}
       z_1 & z_2  &\ldots  & z_{p-1} & z_p\\ 
       z_2 & z_3  &\ldots  & z_p     & 0 \end{matrix}\right. \left. \dashline ~ \begin{matrix}
                                       u_1 & u_2 &\ldots &u_{q-1} & u_q \\ 
                                       u_2+\lambda u_1 & u_3+\lambda u_2 &\ldots & u_q+\lambda u_{q-1} &\lambda u_q
                                       \end{matrix} \right).
\]
Then $I_2(X)=I_2(z)+I_2(u)+(z_1,\ldots,z_p)(u_1,\ldots,u_q)$. Here $I_2(z)$ is the ideal of $2$-minors of the first Jordan block of $X$ and similarly for $I_2(u)$, which is also the ideal of $2$-minors of
\[
\left(\begin{matrix}
       u_1 & u_2  &\ldots  & u_{q-1} & u_q\\ 
       u_2 & u_3  &\ldots  & u_q     & 0 \end{matrix} \right).
\]
In this case $t=2$, $g_1=g_2=1$. Consider the following ideals of $k[X]/I_2(X)$:
\begin{align*}
& J^{0,0}=(0),\\
& J^{1,r}=(z_1,\ldots,z_r), J^{2,s}=(u_1,\ldots,u_s),\\
& K^{2,1,r}=(u_1,\ldots,u_q)+(z_1,\ldots,z_r),\\
& K^{2,2,s}=(z_1,\ldots,z_p)+(u_1,\ldots,u_s),
\end{align*}
where $1\le r\le p, 1\le s\le q$. Then the collection 
$$\{J^{0,0}\} ~ \bigcup ~ \{J^{1,r}\} ~ \bigcup ~ \{J^{2,s}\} ~ \bigcup ~ \{K^{2,1,r}\} ~ \bigcup ~ \{K^{2,2,s}\}$$ 
is a Koszul filtration for the ring in question.

In more details, we have $K^{2,1,p}=\mm$. The colon condition is verified by the following equalities
\begin{enumerate}
\item $J^{0,0}:J^{1,1}=J^{2,q}$,
\item $J^{0,0}:J^{2,1}=J^{1,p}$,
\item $J^{1,r-1}:J^{1,r}=J^{2,s-1}:J^{2,s}=\mm$, if $r,s\ge 1$,
\item $J^{2,q}:K^{2,1,1}=J^{2,q}$,
\item $J^{1,p}:K^{2,2,1}=J^{1,p}$,
\item $K^{2,1,r-1}:K^{2,1,r}=K^{2,2,s-1}:K^{2,2,s}=\mm$, if $r,s\ge 1$.
\end{enumerate}
These identities will be justified by the next result.
\end{ex}
The fact that the ideals $\{J^{i,j,r}\} ~ \bigcup ~ \{K^{\ell,i,j,r} \}$ in the Construction \ref{constr_Jordan_filtration} form a Koszul filtration for the determinantal ring $k[X]/I_2(X)$ follows from the following lemma. Note that (i), (ii), (iii) gives the colon condition for $J^{i,j,r}$ with either $r=j=1$ or $r=1, j>1$, or $r>1$, respectively, hence we obtain the colon condition for all ideals of type $J$. Similarly, thanks to (iv), (v), (vi), we obtain the colon condition for all ideals of type $K$.

\begin{lem}[Colon condition for the ideals $J^{i,j,r}$ and $K^{\ell,i,j,r}$]
\label{Jordan_filtration} For each $1 \le \ell \le t$, $1 \le i \le \ell$, $2 \le j \le g_i$, and $2 \le r \le p_{ij}$, there are equalities:
\begin{enumerate} 
\item $J^{i-1,g_{i-1},p_{(i-1)g_{i-1}}}:z^i_{1,1}=K^{t,i,0,0}$,
\item $J^{i,j-1,p_{i(j-1)}} :z^i_{j,1} = K^{t,i,j-1,p_{i(j-1)}}$,
\item $J^{i,j,r-1}:z^i_{j,r} = \mm$,
\item $K^{\ell-1,i-1,g_{i-1},p_{(i-1)g_{i-1}}}:z^i_{1,1}=K^{t,i,0,0},$
\item $K^{\ell,i,j-1,p_{i(j-1)}}:z^i_{j,1} = K^{t,i,j-1,p_{i(j-1)}}$,
\item $K^{\ell,i,j,r-1} :z^i_{j,r} =\mm$.
\end{enumerate}
\end{lem}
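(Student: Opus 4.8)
The plan is to establish the six colon equalities in Lemma~\ref{Jordan_filtration} by exploiting the structure of the ideal $I_2(X)$ for a concatenation of Jordan blocks. The crucial structural fact is Lemma~\ref{lem_identities}(vi): Jordan blocks with distinct eigenvalues annihilate each other, so $z^i_{\pnt,\pnt}z^{i'}_{\pnt,\pnt}=0$ whenever $i\neq i'$. This means that in any colon computation, the products of a variable $z^i_{j,r}$ with variables from other eigenvalue groups vanish automatically, which will account for the cross-terms $\sum_{u\neq i}J^{u,g_u,p_{ug_u}}$ appearing in the ideals $K^{\ell,i,j,r}$ and on the right-hand sides of (i) and (iv). The remaining work is then essentially confined to a \emph{single} eigenvalue group, where $X$ looks like a concatenation of Jordan blocks all sharing one eigenvalue $\lambda_i$.

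First I would reduce all six statements to computations inside one eigenvalue group. For a single variable $z^i_{j,r}$, Lemma~\ref{lem_identities}(vi) gives immediately that every $z^u_{\pnt,\pnt}$ with $u\neq i$ lies in the colon ideal $(\;\cdot\;):z^i_{j,r}$; this contributes exactly the sum $\sum_{u\neq i}J^{u,g_u,p_{ug_u}}$ (restricted to the relevant range of $u$) to each colon. After factoring this out, items (i) and (iv), (ii) and (v), (iii) and (vi) become essentially identical pairs, the only difference being whether the ambient ideal already includes the cross-terms from other groups. So I would prove (i)--(iii) for a pure single-eigenvalue matrix and then observe that (iv)--(vi) follow by adding back the automatically-present cross-terms and invoking (vi) again.

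Within one eigenvalue group, the key observation is that after the change of variables implicit in a Jordan block with eigenvalue $\lambda$, the second row is $\lambda$ times a shift of the first, so the $2$-minors governing $I_2(X)$ relate $z_{j,r}$ to $z_{j,r+1}$ exactly as in a nilpotent/eigenvalue-$0$ block up to the shift. The real engine is Lemma~\ref{lem_Groebner}: for a matrix with \emph{no nilpotent block} (which is our situation, since all blocks here are Jordan), the $2$-minors form a quadratic Gr\"obner basis in the appropriate degree revlex order. I would use this to establish (iii), namely $J^{i,j,r-1}:z^i_{j,r}=\mm$: modulo $J^{i,j,r-1}$ the variable $z^i_{j,r}$ becomes (up to the eigenvalue shift) the leading variable of a remaining block, and the Gr\"obner basis forces every other variable into the colon, so it equals $\mm$. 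For (i) and (ii), which assert that passing the colon through the \emph{first} variable of a new block returns an ideal of type $K$, I would argue that $z^i_{j,1}$ annihilates precisely the completed earlier blocks (giving the $J^{i,j-1,p_{i(j-1)}}$-part) together with the whole previous-eigenvalue contribution, and check the reverse containment again via the Gr\"obner basis or directly via the minor identities as in Lemma~\ref{lem_identities}(iii)--(v).

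\textbf{The main obstacle} I expect is verifying the reverse inclusions in (i), (ii), (iv), (v) --- proving that the colon ideal is no \emph{larger} than the claimed ideal of type $K$. The forward inclusions (that the claimed generators lie in the colon) follow mechanically from the annihilation identities of Lemma~\ref{lem_identities}, but ruling out extra elements requires control of the full ideal, which is why the Gr\"obner basis of Lemma~\ref{lem_Groebner} is indispensable: it lets me read off a $k$-basis of the quotient and confirm that $z^i_{j,1}$ (respectively $z^i_{1,1}$) is a non-zerodivisor on the appropriate quotient, so that the colon does not collapse further. I would handle this uniformly by noting that in each case, after reducing modulo the smaller type-$K$ or type-$J$ ideal, the relevant first variable of a block becomes a non-zerodivisor on the remaining determinantal ring (which is again of Jordan type with no nilpotent block), exactly as in the non-zerodivisor arguments used in Lemma~\ref{I<=1_series}. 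The bookkeeping across the index ranges $1\le i\le\ell$, $1\le j\le g_i$, $1\le r\le p_{ij}$ is tedious but routine once the single-block non-zerodivisor statement is in hand.
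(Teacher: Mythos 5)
Your proposal follows essentially the same route as the paper's proof: the forward inclusions come from the annihilation identities of Lemma \ref{lem_identities} (notably part (vi) for distinct eigenvalues), and the reverse inclusions in (i), (ii), (iv), (v) are obtained by passing modulo the claimed colon ideal, reducing to a single-eigenvalue concatenation of Jordan blocks, and showing the leading variable of the new block is a non-zerodivisor via the Gr\"obner basis of Lemma \ref{lem_Groebner}, exactly as in Lemma \ref{I<=1_series}(i). The only cosmetic difference is that for items (iii) and (vi) the paper records the needed product identities explicitly (an analogue of Lemma \ref{lem_identities}(iv)) rather than re-invoking the Gr\"obner basis, but the substance is identical.
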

\begin{proof} {\bf For (i)}: By Lemma \ref{lem_identities}(v), the left-hand side contains the right-hand side. Working modulo the right-hand side, we may assume that $X$ consists of Jordan blocks with the same eigenvalue $\lambda$ (which can be taken to be $0$) and $i = 1$. What we need to prove is that:
\begin{equation}
0:z^1_{1,1} = 0.
\end{equation}
This follows from the same Gr\"obner basis argument as in the proof of Lemma \ref{I<=1_series}(i).

By the same arguments, we obtain (ii), (iv) and (v).

{\bf For (iii)}: this is a consequence of Lemma \ref{lem_identities}(iii) and the following analogue of \ref{lem_identities}(iv):
\[
\sum_{t=1}^{g}(z_{t,1},\ldots,z_{t,p_t})\subseteq (z_{i,r-1}):z_{i,r}.
\]
By the same arguments, we obtain (vi). The lemma follows.
\end{proof}

\subsection{Koszul filtration for Theorem \ref{Koszul_filtration}}
\label{sect_general}
Let $X$ be a Kronecker-Weierstrass matrix satisfying the length condition. The final step to get a Koszul filtration for the determinantal ring of $X$ is ``concatenating" the filtration for Jordan blocks in Section \ref{sect_Jordan} with the above Koszul filtration for the matrix of nilpotent and scroll blocks in Section \ref{sect_ns}. The result is our desired filtration for any Kronecker-Weierstrass matrix satisfying the length condition. 

\begin{constr}[Koszul filtration]
\label{constr_filtration} 
For each $i = 1,\ldots,c$, denote $s_i = \max\{m_i - n_1,1\}$. With notation from Sections \ref{sect_ns} and \ref{sect_Jordan}, our Koszul filtration consists of the ideals of the following types:

\begin{enumerate}
\item $H_{0,m_0-1}=(0)$,

\item $H_{i,r}$ (where $1\le i \le c, 1\le r\le m_i-1$) with generators as in Construction \ref{constr_ns_filtration},

\item $I_{s;\bsa,\bsb}$ (where $1\le s\le d$, and $\bsa = (a_1,\ldots,a_s), \bsb = (b_1,\ldots,b_s)$ such that 
$b_j\ge 0, 1\le a_j\le n_j+1-b_j$ for $1\le j\le s$, $\bsb$ has no gap) with generators as in Construction \ref{constr_ns_filtration},

\item $J_{\bsa,\bsb}^{i,j,r}$, where $1 \le i \le t$, $1 \le j \le g_i$, $1 \le r \le p_{ij}$, and $\bsa = (a_1, \ldots,a_d)$, $\bsb = (b_1, \ldots, b_d)$ such that  $b_r\ge 0, 1\le a_r\le n_r+1-b_r$ for $r=1,\ldots,d$, $\bsb$ has no gap, given by
\[
J_{\bsa,\bsb}^{i,j,r} = I_{d;\bsa,\bsb} + J^{i,j,r}.\]
Here $J^{i,j,r}$ has generators as in Construction \ref{constr_Jordan_filtration}.
\item and $K_{\bsa,\bsb}^{\ell, i,j,r}$, where $1 \le i\le  \ell \le t$, $1 \le j\le g_i$, $1 \le r \le p_{ij}$ and $\bsa = (a_1, \ldots,a_d)$, $\bsb = (b_1, \ldots, b_d)$ such that $b_r\ge 0, 1\le a_r\le n_r+1-b_r$ for $r=1,\ldots,d$, $\bsb$ has no gap, given by
\[
K_{\bsa,\bsb}^{\ell,i,j,r} = I_{d;\bsa,\bsb} + K^{\ell,i,j,r}.
\]
Here $K^{\ell,i,j,r}$ has generators as in Construction \ref{constr_Jordan_filtration}.
\end{enumerate}
\end{constr}
\begin{rem}
1) Construction \ref{constr_filtration} generalizes Construction \ref{constr_ns_filtration} and Construction \ref{constr_Jordan_filtration}.

2) Note that if $I$ is an ideal in a Koszul filtration of $R$ then necessarily $R/I$ is Koszul. This can be used as a test for our Koszul filtration: one can check that modulo ideals of type $H, I, J$ or $K$, we again get determinantal rings of matrices {\em satisfying the length condition}. Therefore such quotient rings should also be Koszul by Theorem \ref{Koszul_filtration}, giving support to the correctness of our filtration \ref{constr_filtration}.
\end{rem}

\begin{ex}
Consider the matrix (where $\lambda \in k\setminus 0$)
\[
X=\left(\begin{matrix}       0   & x_1 & x_2 & x_3  \\
       x_1 & x_2  & x_3 &0 \end{matrix}\right. \left. \dashline ~ \begin{matrix}
                                       y_1 & y_2   \\
                                       y_2 & y_3 
\end{matrix} \right. \left. \dashline ~\begin{matrix} z_1 & z_2 \\ 
                                                      z_2  &  0
                                     \end{matrix} \right.
                                                               \left. \dashline  ~\begin{matrix} u_1 & u_2 \\ 
                                                               u_2 + \lambda u_1 & \lambda u_2
                                                               \end{matrix}        \right).
\]
In this example, $c=d=1, t=2, g_1=g_2=1$. Moreover, $s_1=2$. Denote $H_{0,m_0-1}$ by $H_0$, $H_{1,r}$ by $H_r$, $I_{1,(a_1),(b_1)}$ by $I_{a_1,b_1}$, $J^{i,1,r}_{(a_1),(b_1)}$ by $J^{i,r}_{a_1,b_1}$ and $K^{\ell,i,1,r}_{(a_1),(b_1)}$ by $K^{\ell,i,r}_{a_1,b_1}$. The filtration \ref{constr_filtration} consists of the following ideals:
\begin{align*}
& H_0=(0), H_1=(x_2), H_2=(x_1,x_2), H_3=(x_1,x_2,x_3), I_{1,0}=(x_1,x_2,x_3,y_1), \\
& I_{2,0}=(x_1,x_2,x_3,y_1,y_2), I_{1,1}=(x_1,x_2,x_3,y_1,y_3), I_{2,1}=(x_1,x_2,x_3,y_1,y_2,y_3),\ldots,\\
& J^{1,1}_{1,0}=(x_1,x_2,x_3,y_1,z_1), J^{1,2}_{1,0}=(x_1,x_2,x_3,y_1,z_1,z_2), J^{1,1}_{2,1}=(x_1,x_2,x_3,y_1,y_2,y_3,z_1),\\
& J^{2,1}_{1,0}=(x_1,x_2,x_3,y_1,u_1), J^{2,2}_{2,1}=(x_1,x_2,x_3,y_1,y_2,y_3,u_1,u_2),\ldots,\\
& K^{2,1,1}_{1,0}=(x_1,x_2,x_3, y_1,u_1,u_2,z_1),K^{2,2,2}_{1,1}=(x_1,x_2,x_3,y_1,y_3,z_1,z_2,u_1,u_2),\\
& K^{2,2,2}_{2,0}=(x_1,x_2,x_3,y_1,y_2,z_1,z_2,u_1,u_2),\\
&\ldots,\\
& K^{2,2,2}_{2,1}=(x_1,x_2,x_3,y_1,y_2,y_3,z_1,z_2,u_1,u_2)=\mm.
\end{align*}
For example, we can check by Macaulay2 that:
\begin{enumerate}
\item $H_0:H_1=K^{2,2,2}_{1,1}, H_1:H_2=H_2:H_3=\mm$,
\item $H_3:I_{1,0}=H_3,$
\item $I_{1,0}:I_{2,0}=K^{2,2,2}_{2,0}$,
\item $I_{2,0}:I_{2,1}=K^{2,2,2}_{2,0}$,
\item $I_{2,1}:J^{1,1}_{2,1}=J^{2,2}_{2,1}$.
\end{enumerate}
\end{ex}

Let us now prove Theorem \ref{Koszul_filtration} by showing that Construction \ref{constr_filtration} really gives a Koszul filtration.

\begin{proof}[Proof of Theorem \ref{Koszul_filtration}] 
We show that the list of ideals 
$$\Fc = \{H_{i,j}\} ~ \bigcup ~ \{I_{s;\bsa,\bsb}\} ~ \bigcup ~ \{J_{\bsa,\bsb}^{i,j,r}\} ~ \bigcup ~ \{K_{\bsa,\bsb}^{\ell,i,j,r} \},$$
in Construction \ref{constr_filtration} gives a Koszul filtration for $R$.

From the definition of $\mathcal F$, the first two conditions of the definition of Koszul filtration follows immediately. For the colon condition, the following equalities hold.

\begin{enumerate}
\item For the ideal $H_{i,1}$ where $1 \le i\le c$, we have 
$$
H_{i-1,m_{i-1}-1}:H_{i,1}=H_{i-1,m_{i-1}-1}:x_{i,s_i} = K^{t,t,g_t,p_{tg_t}}_{\bsa_i,\bsb_i}
$$ 
where $\bsa_i$ and $\bsb_i$ are as in the Lemma \ref{H_series}(i): The left-hand side contains the right-hand side because of Lemma \ref{lem_identities}(ii) and Lemma \ref{H_series}(i). Working modulo $K^{t,t,g_t,p_{tg_t}}$ we may assume that $X$ has no Jordan blocks. The equality now follows from Lemma \ref{H_series}(i). 

\item For the ideal $H_{i,j}$ where $1 \le i \le c$ and $2 \le j \le m_i-1$, we have
 $$
 H_{i,j-1}:H_{i,j} = \mm.
 $$ 
 This follows from Lemma \ref{H_series}(ii) and Lemma \ref{lem_identities}(i).

\item For the ideal $I_{s,\bsa,\bsb}$ where $1\le s \le d$, and $\bsb$ is such that $b_i \ge 2$ for some $i$: denote $\widehat{\bsb} = (b_1, \ldots, b_{i-1}, b_i-1, b_{i+1}, \ldots, b_s)$. Then $I_{s;\bsa, \bsb} = I_{s;\bsa,\widehat{\bsb}} + (y_{i,n_i - b_i +2})$ and 
\[ I_{s;\bsa,\widehat{\bsb}}:y_{i,n_i-b_i+2} = \mm.\]
This follows from Lemma \ref{lem_identities}(iii) and Lemma \ref{I>=2_series}(i).
\item For the ideal $I_{s,\bsa,\bsb}$ where $1\le s \le d$, $b_1, \ldots, b_s \le 1$ and $\bsa$ is such that $a_i \ge 2$ for some $i$: denote $\widehat{\bsa} = (a_1, \ldots, a_{i-1}, a_i-1, a_{i+1}, \ldots, a_s)$. Then $I_{s;\bsa,\bsb} = I_{s;\widehat{\bsa},\bsb} + (y_{i,a_i})$ and 
\[
I_{s;\widehat{\bsa},\bsb}:y_{i,a_i}=\begin{cases}
\mm, &\textnormal{if $b_i= 1$};\\
K^{t,t,g_t,p_{tg_t}}_{(a'_1,\ldots,a'_{i-1},a'_i,\ldots,a'_d),0_d}, &\textnormal{if $b_i=0$},
\end{cases}
\]
where $a'_i=n_i$ and for $j\neq i$, $a'_j=\begin{cases}
n_j,  &\textnormal{if $n_j-a_j\ge n_i-a_i+1$},\\
n_j+1, &\textnormal{otherwise}.
\end{cases}$

This follows from Lemma \ref{I>=2_series}(ii) and Lemma \ref{lem_identities}(iii).

\item For the ideal $I_{s,\bsa,\bsb}$ where $1 \le s \le d$, $\bsa = 1_s$ and $b= 1_i$ for some $1 \le i \le s$: we have $I_{s;1_s;1_i} = I_{s;1_s,1_{i-1}} + (y_{i,n_i+1})$ and 
\[ 
I_{s;1_s,1_{i-1}}:y_{i,n_i+1} = K^{t,t,g_t,p_{tg_t}}_{(n_1+1,\ldots, n_{i-1}+1,1,n_{i+1}-n_i+1, \ldots, n_d-n_i+1),0_d}
\]
This follows from Lemma \ref{lem_identities}(iii) and Lemma \ref{I<=1_series}(i).
\item For the ideal $I_{s,\bsa,\bsb}$ where $1\le s \le d$, $\bsa = 1_s$ and $b = 0_s$: we have $I_{s,1_s,0_s} = I_{s-1,1_{s-1},0_{s-1}} + (y_{s,1})$ and 
\[ 
I_{s-1;1_{s-1},0_{s-1}}: y_{s,1} = I_{s-1, (n_1+1, \ldots, n_{s-1}+1),0_{s-1}}
\]
That the left-hand side contains the right-hand side follows from Lemma \ref{I<=1_series}(ii). Working modulo the right-hand side, we may assume that $X$ has no nilpotent blocks, and $s = 1$. We need to prove that $y_{1,1}$ is a non-zero divisor. This follows from Lemma \ref{lem_Groebner}.
\item Finally, for $J$ and $K$ series, the similar equalities hold true as in Lemma \ref{Jordan_filtration}.
\end{enumerate}
This completes the proof of Theorem \ref{Koszul_filtration}.
\end{proof}
\section{The necessary condition}
\label{Koszul_necessary}

For $m\ge 1$ and $n\ge 1$, consider the scroll of type $(m,n)$. It is given by the following matrix
\[
X=\left(\begin{matrix}
       x_1 & x_2  &\ldots  & x_{m}\\
       x_2 & x_3  &\ldots  & x_{m+1} \end{matrix}\right. \left. \dashline ~ \begin{matrix}
                                       y_1 & y_2 &\ldots &y_n \\
                                       y_2 & y_3 &\ldots &y_{n+1}
                                       \end{matrix} \right).
\]
\begin{thm}
\label{non-Koszul}
For any $n\ge 1$ and $m\ge 2n+1$, the ring $R(m,n)/(x_1,x_{m+1})$ is not Koszul.
\end{thm}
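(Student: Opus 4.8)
The plan is to recognize $R:=R(m,n)/(x_1,x_{m+1})$ as a square-zero extension of a rational normal curve and to turn the Koszul question into a linearity question for a single module, which I then settle with the monoid/HRW machinery. Setting $x_1=x_{m+1}=0$ converts the length-$m$ scroll block into a length-$m$ nilpotent block (in the reversed notation of Section~\ref{background}) and leaves the length-$n$ scroll block intact, so $R$ is the determinantal ring of a matrix concatenating one nilpotent block of length $m$ with one scroll block of length $n$. Writing $\xi_1,\dots,\xi_{m-1}$ for the images of $x_2,\dots,x_m$ and $y_1,\dots,y_{n+1}$ for the scroll variables, Lemma~\ref{lem_identities} gives $\xi_i\xi_j=0$, the vanishing $\xi_ry_s=0$ for $r+s\le n+1$ or $r+s\ge m+1$, and the sliding relations $\xi_ry_s=\xi_{r+1}y_{s-1}$ for $n+2\le r+s\le m$. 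In particular the subalgebra $A\subseteq R$ generated by the $y_r$ is the coordinate ring of the rational normal curve of degree $n$, a rational normal scroll and hence Koszul; and because $\xi_i\xi_j=0$ the ring $R$ is the trivial extension $R=A\ltimes N$, where $N$ is the $A$-module generated in degree $1$ by $\xi_1,\dots,\xi_{m-1}$.

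Grade $R$ by the exponent of the ruling variable $s_1$ attached to the first block in the monoid presentation, so that $A$ is the $s_1$-degree-zero part and $N$ the $s_1$-degree-one part of $R$; there is nothing in higher $s_1$-degree since $\xi_i\xi_j=0$. Decomposing the reduced bar resolution of $R$ along this grading is standard: the $s_1$-degree-zero part recovers $\Tor^A(k,k)$, which is diagonal because $A$ is Koszul, while the $s_1$-degree-one part (exactly one $N$-factor among the bar letters) contains $\Tor^A(N,k)$ as a direct summand, shifted up by one in homological degree and unchanged in internal degree. Concretely this gives $\beta^R_{i+1,j}(k)\ge\beta^A_{i,j}(N)$. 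Hence if $N$ fails to have a linear $A$-resolution, that is, if $\beta^A_{i,j}(N)\neq 0$ for some $j\ge i+2$, then $\beta^R_{i+1,j}(k)\neq 0$ with $j>i+1$, so $R$ is not Koszul. It therefore suffices to produce a single nonlinear syzygy of $N$ over $A$, and this is precisely where the hypothesis $m\ge 2n+1$ must enter.

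To exhibit such a syzygy I would use the monoid presentation together with the Herzog--Reiner--Welker formula \cite{HRW}, applied over the genuine affine semigroup ring $A$. Writing $y_r\mapsto a^{\,n+1-r}b^{\,r-1}$ realizes $A$ as the degree-$n$ Veronese of $k[a,b]$ and identifies $N$ with the monomial $A$-module spanned by those $a^{P}b^{Q}$ with $P,Q\le m-1$, $P+Q\ge m$, and $P+Q\equiv m\pmod n$, a module cut out of $k[a,b]$ by a two-sided truncation in both $a$ and $b$. The module version of the HRW formula expresses each $\Tor^A_i(N,k)_h$ as the reduced simplicial homology of a complex $\Delta^N_h$ built from the factorizations of $h$ that remain inside $N$. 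The length condition governs this complex: when $m\ge 2n+1$ the two truncation walls $P\le m-1$ and $Q\le m-1$ are far enough apart that the chain of sliding relations $\xi_ry_s=\xi_{r+1}y_{s-1}$ threading the nilpotent block cannot be contracted, leaving a nontrivial cycle in $\Delta^N_h$ at a degree $h$ of internal degree $\ge i+2$; by contrast, for $m\le 2n$ the walls meet early enough that $\Delta^N_h$ stays contractible off the linear line, consistent with the Koszulness proved in Theorem~\ref{Koszul_filtration}.

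The main obstacle is exactly this last homology computation: pinning down the witnessing multidegree $h$ and the homotopy type of $\Delta^N_h$, and verifying that its reduced homology is nonzero in a homological degree strictly below the internal degree of $h$. A subsidiary technical point is to set up the module version of the HRW formula carefully, since $R$ itself is not a domain and the obstruction lives in an $s_1$-degree-one multidegree that is zero in $R$; passing to the semigroup ring $A$ and the monomial module $N$ is what keeps the combinatorics inside the HRW framework. As a guide and a check I would first treat the base case $n=1$, where $A=k[y_1,y_2]$ is a polynomial ring and the obstruction appears explicitly as the degree-four second syzygy $(y_2^2,-y_1^2,-y_1y_2)$ of $N$, giving $\Tor^A_2(N,k)_4\neq 0$ and hence $\beta^R_{3,4}(k)\neq 0$; and I would confirm that the construction degenerates exactly at the threshold $m=2n$, in agreement with Theorem~\ref{Koszul_filtration}.
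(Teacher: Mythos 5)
Your structural reduction is sound as far as it goes: setting $x_1=x_{m+1}=0$ does turn the length-$m$ scroll block into a length-$m$ nilpotent block, the identity $(x_{\pnt,\pnt})^2=0$ of Lemma \ref{lem_identities}(i) does exhibit $T=R(m,n)/(x_1,x_{m+1})$ as a trivial extension $A\ltimes N$ of the degree-$n$ rational normal curve $A$ (which is an algebra retract of $T$), and Gulliksen's formula $P^T_k=P^A_k/(1-t\,P^A_N)$ yields $\beta^T_{i+1,j}(k)\ge\beta^A_{i,j}(N)$, so it would indeed suffice to produce one Betti number $\beta^A_{i,j}(N)\neq 0$ with $j\ge i+2$. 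Your description of $N$ as the two-sided truncation $\{a^Pb^Q:\,P,Q\le m-1,\ P+Q\ge m,\ P+Q\equiv m\ (\mathrm{mod}\ n)\}$ over the $n$-th Veronese of $k[a,b]$ is also correct, and your $n=1$, $m=3$ computation ($N\cong(a,b)^3/(a^3,b^3)$ with $\Tor^A_2(N,k)_4\neq 0$) checks out. But the decisive step --- exhibiting, for every $n\ge 1$ and $m\ge 2n+1$, a nonlinear Betti number of $N$ over $A$ --- is precisely what you defer as ``the main obstacle,'' so what you have is a reduction, not a proof. Compounding this, the tool you propose for that step, a ``module version'' of the Herzog--Reiner--Welker formula, is not what \cite{HRW} supplies: their Theorem 2.1 computes the multigraded Betti numbers of $k$ over a quotient of an affine semigroup ring by a monomial ideal, and a version for $\Tor^A_{\pnt}(N,k)$ with $N$ a monomial module would itself have to be formulated and justified (or replaced by a direct syzygy computation over the Veronese, e.g.\ showing that the alternating chain of sliding relations first closes up in internal degree $\lceil m/n\rceil+1\ge 4$ and is minimal there).

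For comparison, the paper avoids both issues by staying inside the literal hypotheses of \cite{HRW}: it applies their theorem directly to $T=k[\Lambda]/(x^ms_1,y^ms_1)$ at the single multidegree $\mu=x^{an}y^ms_1s_2^{a}$ with $a=\lceil m/n\rceil$, whose total degree $a+1\ge 4$ is where $m\ge 2n+1$ first enters; the long exact sequence of the pair $(\Delta_\mu,\Delta_{\mu,J})$ together with the Koszulness of $k[\Lambda]$ reduces $\beta^T_{3,\mu}(k)\neq 0$ to the purely combinatorial claim that $\Delta_{\mu,J}$ is disconnected, which is then checked by separating the facets according to whether the quotient chain uses $y^ms_1$ or $x^ms_1$ (the inequality $(a-1)n<m$ being used again to keep the two families apart). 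If you want to complete your route, you must actually carry out the $\Tor^A_2(N,k)$ computation for general $n$ and $m\ge 2n+1$; until then the argument has a genuine gap at its central point.
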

\begin{proof}
Denote $T=R(m,n)/(x_1,x_{m+1})$. We introduce some notations. Let $x,y,s_1$ and $s_2$ be  variables. Identify $\N^4$ with the multiplicative monoid $\left<x,y,s_1,s_2\right>$ by mapping a sequence of natural numbers $(g,h,p,q)$ to $x^gy^hs_1^ps_2^q$. Recall that $R(m,n)$ is the monoid ring $k[\Lambda]$ where $\Lambda$ is the following affine submonoid of $\N^4$
\[
\left<x^{m}s_1,x^{m-1}ys_1,\ldots,y^{m}s_1,x^{n}s_2,x^{n-1}ys_2,\ldots,y^ns_2\right>.
\]
Note that $R(m,n)$ is standard graded $k$-algebra by giving each of the minimal generators of $\Lambda$ the degree $1$.

Observe that $T$ has an induced $\Lambda$-grading and $k$ is a $\Lambda$-graded module. Denote $a=\lceil m/n \rceil$ and $\mu=x^{an}y^{m}s_1s_2^a$, an element of degree $a+1\ge 4$ of $\Lambda$.

{\bf Claim}: We always have $\beta_{3,\mu}^T(k)\ge 1$.

This implies that $\beta_{3,a+1}^T(k)\neq 0$, hence $T$ is not Koszul.

We will use a result of Herzog, Reiner and Welker \cite[Theorem 2.1]{HRW}, which gives the multigraded Betti numbers of $k$ over $T$. Denote by $\Delta_{\mu}$ the simplicial complex whose faces are sequences $\alpha_1 <\cdots <\alpha_s$ in $(0,\mu)$ where $\alpha_i\in \Lambda$. Let $J$ be the submonoid generated by $x^ms_1,y^ms_1$ of $\Lambda$. Note that $T=k[\Lambda]/(x^ms_1,y^ms_1)$. Denote by $\Delta_{\mu,J}$ the subcomplex of $\Delta_{\mu}$ consisting of sequences $\alpha_1 <\cdots <\alpha_s$ such that for some $0\le i\le s$, we have $\alpha_{i+1}/\alpha_i \in J$, where $\alpha_0=0$ and $\alpha_{s+1}=\mu$ by convention.

By \cite[Theorem 2.1]{HRW}, we have
\[
\beta_{3,\mu}(k)=\dim_k \widetilde{H}_1(\Delta_{\mu},\Delta_{\mu,J};k),
\]
where the left-hand side is the reduced, relative simplicial homology of the pair $\Delta_{\mu},\Delta_{\mu,J}$. There is an exact sequence
\[
\widetilde{H}_1(\Delta_{\mu};k) \to \widetilde{H}_1(\Delta_{\mu},\Delta_{\mu,J};k) \to \widetilde{H}_0(\Delta_{\mu,J};k) \to \widetilde{H}_0(\Delta_{\mu};k).
\]
Since $k[\Lambda]=R(m,n)$ is Koszul, by the same result cited above, the two terms on two sides of the above sequence are zero. Thus it is enough to show that $\widetilde{H}_0(\Delta_{\mu,J};k)\neq 0$, equivalently, $\Delta_{\mu,J}$ is disconnected.

There are two types of facets of $\Delta_{\mu,J}$: those sequences $\alpha_1 <\cdots <\alpha_s$ such that $\alpha_{i+1}/\alpha_i \in (y^ms_1)\Lambda$ for some $0\le i\le s$, and those such that $\alpha_{j+1}/\alpha_j \in (x^ms_1)\Lambda$ for some $0\le j\le s$. These two classes of facets are disjoint since $\mu$ is not a multiple of $s_1^2$ in $\N^4$.

Now $\mu=y^ms_1(x^ns_2)^a$, so if $\alpha_1 <\cdots <\alpha_s$ is a facet of the first type, the sequence
$(\alpha_{i+1}/\alpha_i)_{i=0}^s$ is (up to permutation) the sequence $(y^ms_1,x^ns_2,x^ns_2,x^ns_2,\ldots,x^ns_2)$ (there are $a$ elements $x^ns_2$). Therefore the only facets of the first type are of the form
\[
(x^ns_2,(x^ns_2)^2,\ldots,(x^ns_2)^t,y^ms_1(x^ns_2)^t,y^ms_1(x^ns_2)^{t+1},\ldots,y^ms_1(x^ns_2)^{a-1})\\
\]
for some $0\le t \le a$.

The following diagram illustrates the case $n=1, m=3$.

\begin{displaymath}
    \xymatrix{      &                    &              & x^3y^3s_1s_2^3                          &   &  &\\
   &(xs_2)^3 \ar@{.}[urr]  & y^3s_1(xs_2)^2 \ar@{.}[ur] &                 & x^3s_1(ys_2)^2 \ar@{.}[ul] &(ys_2)^3 \ar@{.}[ull] & &\\
 &(xs_2)^2  \ar[u] \ar[ur]    &  y^3s_1xs_2  \ar[u]                       &           & x^3s_1ys_2 \ar[u]   &(ys_2)^2 \ar[ul] \ar[u] &\\
 & xs_2 \ar[u] \ar[ur]        & y^3s_1 \ar[u]            &                &  x^3s_1 \ar[u]         & ys_2 \ar[u] \ar[ul] \\}
\end{displaymath}

In the diagram, the arrows signify divisibility of upper elements to the corresponding lower elements. The facets of $\Delta_{\mu,J}$ are maximal chains of arrows in the diagram.

Fix $1\le t\le a$. we show that no facet of second type may contains $(x^ns_2)^t$. Indeed, otherwise we have a facet $\alpha_1< \cdots <\alpha_s$ of second type where $\alpha_i=(x^ns_2)^t$ for some $1\le i\le s$. None of the quotient $\alpha_j/\alpha_{j-1}$ where $j\le i$ can be $x^ms_1$ since $x^ms_1$ is not a divisor of $(x^ns_2)^t$. Now $\alpha_{s+1}/\alpha_i= (\alpha_{s+1}/\alpha_s)\cdots (\alpha_{i+1}/\alpha_i)=x^{(a-t)n}y^ms_1s_2^{a-t}$. One of the quotients $\alpha_{j+1}/\alpha_j$ (where $j=i,i+1,\ldots,s$) is $x^ms_1$, hence the product of the remaining ones is $x^{(a-t)n-m}y^ms_2^{a-t}$. The last element does not belong to $\Lambda$ since $(a-t)n\le (a-1)n<m$, a contradiction.

Similarly, one can prove that no facet of second type may contains one of the elements $y^ms_1,y^ms_1xs_2,\ldots,y^ms_1(xs_2)^{a-1}$.

It is immediate that $(y^ns_2,y^{2n}s_2^2,\ldots,y^{(a-1)n}s_2^{a-1},x^{an-m}y^ms_2^a)$ is a facet of second type. Therefore $\Delta_{\mu,J}$ has at least $2$ connected components. (In fact, it has exactly $2$ components, as the interested reader can check that the facets of second type generate a connected complex.) Hence the claim is true, and the proposition is established.
\end{proof}
We are ready for the
\begin{proof}[Proof of the necessary condition in Theorem \ref{main}]
If $m\ge 2n+1$, the determinantal ring $A$ of the submatrix consisting of a nilpotent block of length $m$ and a scroll block of length $n$ is not Koszul, by Theorem \ref{non-Koszul}. Since $A$ is an algebra retract of $R$, from \cite[Proposition 1.4]{HHO}, $R$ is also not Koszul. This is a contradiction, hence $m\le 2n$.
\end{proof}
\begin{rem}
Let $R$ be a rational normal scroll and $Y$ a set of natural coordinates. Using Theorem \ref{main}, we can determine all $Y$ such that the quotient ring $R/(Y)$ is Koszul. Indeed, $R/(Y)$ is defined by a matrix consisting of scroll blocks with certain variables being replaced by zero. By the proof of Lemma \ref{KW_quotient}, one can find a Kronecker-Weierstrass normal form of $R/(Y)$ by first finding the normal form for each of these blocks. Such normal forms exist by Remark \ref{rem_quotient}. Then by Theorem \ref{main}, we easily determine whether $R/(Y)$ is Koszul or not.
\end{rem}


\section{Applications to linear sections of rational normal scrolls}
\label{sect_application}

We start this section by proving that all the linear sections of a scroll have a linear resolution if and only if the scroll is of type $(\nsf_1,\ldots,\nsf_1)$.
\begin{defn}
Let $R$ be a standard graded $k$-algebra with $r_1,\ldots,r_n$ being minimal homogeneous generators of $\mm$. We say that $R$ is {\em strongly Koszul} if for every sequence $1\le i_1 < i_2 <\cdots <i_s\le n$, the ideal $(r_{i_1},\ldots,r_{i_{s-1}}):r_{i_s}$ is an ideal generated by a subset of $\{r_1,\ldots,r_n\}$.
\end{defn}
\begin{rem}
Another notion of strongly Koszul algebras were introduced in \cite[Definition 3.1]{CNR}. The two notions are equivalent when $R=k[\Lambda]$ where $\Lambda$ is an affine monoid and $r_1,\ldots,r_n$ are the minimal generators of $\Lambda$.
\end{rem}
See \cite{HHR} for a detailed discussion of strongly Koszul algebras.
\begin{prop}
For a homogeneous affine monoid $\Lambda$ and $r_1,\ldots,r_n$ the minimal generators of $\Lambda$, the following are equivalent:
\begin{enumerate}
\item $R=k[\Lambda]$ is strongly Koszul;
\item $\reg_R R/(Y)=0$ for every subset $Y$ of $\{r_1,\ldots,r_n\}$.
\end{enumerate}
\end{prop}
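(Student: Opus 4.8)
The plan is to prove the two implications separately, using the Koszul filtration machinery of Section~\ref{sect_K_filtration} for (i)$\Rightarrow$(ii) and the fine multigrading of the monoid ring for (ii)$\Rightarrow$(i).

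For (i)$\Rightarrow$(ii) I would let $\Fc$ be the family of all ideals of $R$ generated by a subset of $\{r_1,\ldots,r_n\}$ and check that it is a Koszul filtration in the sense of Definition~\ref{defn_filtration}. Conditions (i) and (ii) there are immediate, since each $r_j$ is a linear form and both $0$ and $\mm=(r_1,\ldots,r_n)$ lie in $\Fc$. For the colon condition, given a nonzero $I=(r_{i_1},\ldots,r_{i_s})\in\Fc$ with $i_1<\cdots<i_s$, I would take $J=(r_{i_1},\ldots,r_{i_{s-1}})$ and $x=r_{i_s}$; then $I=J+(x)$, and since $J:J=R$ one has $J:I=J:(x)=(r_{i_1},\ldots,r_{i_{s-1}}):r_{i_s}$, which is subset-generated exactly by the strong Koszul hypothesis, so $J:I\in\Fc$. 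Thus $\Fc$ is a Koszul filtration, and the theorem of Conca, Trung and Valla recalled after Definition~\ref{defn_filtration} gives $\reg_R R/I=0$ for every $I\in\Fc$; in particular $\reg_R R/(Y)=0$ for every subset $Y$.

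For (ii)$\Rightarrow$(i) I would fix an increasing sequence $i_1<\cdots<i_s$, set $J=(r_{i_1},\ldots,r_{i_{s-1}})$ and $x=r_{i_s}$, and exploit the multiplication short exact sequence
\[
0\to \bigl(R/(J:x)\bigr)(-1)\xrightarrow{\;\cdot x\;} R/J\to R/(J+(x))\to 0.
\]
Here $R/J=R/(r_{i_1},\ldots,r_{i_{s-1}})$ and $R/(J+(x))=R/(r_{i_1},\ldots,r_{i_s})$ are quotients by subsets of the generators, so both have regularity $0$ by (ii). Applying the standard estimate $\reg A\le\max\{\reg B,\reg C+1\}$ (from the long exact sequence in $\Tor$) to $A=(R/(J:x))(-1)$, $B=R/J$, $C=R/(J+(x))$ yields $\reg_R R/(J:x)+1\le 1$, hence $\reg_R R/(J:x)\le 0$. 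The colon is proper, because $x=r_{i_s}$ is a minimal generator of $\Lambda$ and so is not a $k$-linear combination of the other $r_{i_j}$; thus $J:x$ is a proper ideal generated in degree $1$.

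The remaining, and most delicate, step is to upgrade ``generated in degree $1$'' to ``generated by a subset of $\{r_1,\ldots,r_n\}$'', and here I would use the fine $\Lambda$-grading on $R=k[\Lambda]$. Since $J$ and $(x)$ are monomial ideals, $J:x$ is again a $\Lambda$-graded (monomial) ideal, so its minimal generators are elements of $\Lambda$; being in degree $1$, they lie in $\Lambda_1$, which by homogeneity of $\Lambda$ is exactly $\{r_1,\ldots,r_n\}$. Hence $(r_{i_1},\ldots,r_{i_{s-1}}):r_{i_s}$ is generated by a subset of the minimal generators, which is strong Koszulness. The regularity bookkeeping is routine; the genuine content is that passing from arbitrary linear generation to monomial generation requires the monoid structure, exactly the feature distinguishing this setting from an arbitrary Koszul algebra.
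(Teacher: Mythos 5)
Your proof is correct, and both implications follow the same basic strategy as the paper: (i)$\Rightarrow$(ii) via the observation that the subset-generated ideals form a Koszul filtration, and (ii)$\Rightarrow$(i) via a short exact sequence that bounds the regularity of the colon ideal. The differences are cosmetic but worth recording. For (ii)$\Rightarrow$(i) the paper uses the Mayer--Vietoris sequence $0\to (Y)\cap (r_j)\to (Y)\oplus (r_j)\to (Y,r_j)\to 0$ to get $\reg_R\bigl((Y)\cap(r_j)\bigr)\le 2$, whereas you use the multiplication sequence $0\to (R/(J:x))(-1)\to R/J\to R/(J+(x))\to 0$ to get $\reg_R R/(J:x)\le 0$; since $(Y)\cap(r_j)\cong\bigl((Y):r_j\bigr)(-1)$ (multiplication by $r_j$ is injective, $k[\Lambda]$ being a domain), these are the same estimate. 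The genuine divergence is at the final step: the paper delegates the passage from this regularity bound to strong Koszulness to \cite[Proposition 1.4]{HHR}, while you prove it directly, observing that $J:x$ is $\Lambda$-graded (hence monomial, as each $\Lambda$-graded component of $k[\Lambda]$ is one-dimensional) and that its degree-one monomials are precisely scalar multiples of the $r_i$, so generation in degree one upgrades to generation by a subset of $\{r_1,\ldots,r_n\}$. Your version is therefore self-contained where the paper relies on a citation; what the paper's phrasing buys is brevity and an explicit pointer to the source of the criterion. One small remark: your aside that $J:x$ is proper is indeed needed (an ideal equal to $R$ is not generated by a subset of the $r_i$), and your justification --- that $r_{i_s}\notin(r_{i_1},\ldots,r_{i_{s-1}})$ because distinct minimal generators of $\Lambda$ span distinct one-dimensional $\Lambda$-graded components --- is sound.
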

\begin{proof}
That (i) implies (ii) is obvious: the ideals generated by subsets of $\{r_1,\ldots,r_n\}$ form a Koszul filtration for $k[\Lambda]$. Now assume that (ii) is true. For each subset $Y$ of $\{r_1,\ldots,r_n\}$ and $r_j\notin Y$, consider the short exact sequence
\[
0\to (Y)\cap (r_j)\to (Y)\oplus (r_j) \to (Y,r_j)\to 0.
\]
By the hypothesis, $\reg_R ((Y)\oplus (r_j))=\reg_R (Y,r_j)=1$. Hence $\reg_R ((Y)\cap (r_j))\le 2$. By \cite[Proposition 1.4]{HHR}, this implies that $R$ is strongly Koszul. 
\end{proof}
An immediate corollary is the following result due to Conca.
\begin{prop}[Conca \cite{Con1}]
\label{strongly_Koszul}
The scroll $R=R(\nsf_1,\ldots,\nsf_{\tsf})$ has the property that $\reg_R R/(Y)=0$ for every set of variables $Y$ if and only if $\nsf_1=\nsf_2=\cdots=\nsf_{\tsf}$.
\end{prop}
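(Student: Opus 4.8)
The plan is to deduce Proposition \ref{strongly_Koszul} from the equivalence (i)$\Leftrightarrow$(ii) established just above, applied to the homogeneous affine monoid $\Lambda$ presenting the scroll $R=R(\nsf_1,\ldots,\nsf_\tsf)=k[\Lambda]$, whose minimal generators are precisely the natural coordinates $y_{i,r}$. Since a ``set of variables'' $Y$ is exactly a subset of these generators, (ii) is literally the condition in the statement, so it suffices to prove that $R$ is strongly Koszul if and only if $\nsf_1=\cdots=\nsf_\tsf$. I will work in the monoid model in which $y_{i,r}$ corresponds to $x^{\nsf_i-(r-1)}y^{r-1}s_i$ (for $1\le r\le \nsf_i+1$), so that $\Lambda\subseteq\N^{2+\tsf}$ consists of lattice points with $\deg_x+\deg_y=\sum_i\nsf_i\deg_{s_i}$; the key elementary fact I will exploit is that multiplying a monomial by any generator can only raise its $x$-degree, so every monomial of an ideal $(y_{i,r})$ has $x$-degree at least $\nsf_i-(r-1)$ and involves $s_i$.

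For the sufficiency I assume all $\nsf_i=n$. Then $R(n,\ldots,n)$ is normal (rational normal scrolls are projectively normal), so divisibility of monomials in $\Lambda$ is coordinatewise comparison, and every generator $v_{i,r}=y_{i,r}$ has $x$-degree $n-(r-1)$ and $s$-degree one. I would verify the colon condition directly: for a set $Y$ of generators and a generator $v=v_{j,s}\notin Y$, a monomial $w$ lies in $(Y):v$ precisely when some $v_{i,r}\in Y$ satisfies the two inequalities $r\ge s-\deg_x w$ and $r\le \deg_y w+s$, together with a support condition forcing $s_i$. The whole point is that these same two inequalities let one peel a single generator $v_{k,\ell}\le w$ off $w$ (taking $\ell=\max\{0,\,n-\deg_x w,\,r-s\}$) which still lies in $(Y):v$; hence whenever $w$ has $s$-degree $\ge 2$ it is not a minimal generator, so $(Y):v$ is generated by a subset of the $y_{i,r}$. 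This shows $R(n,\ldots,n)$ is strongly Koszul. (Alternatively one may identify $R(n,\ldots,n)$ with the Segre product of the strongly Koszul algebras $k[x,y]^{(n)}$ and $k[s_1,\ldots,s_\tsf]$.)

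For the necessity I argue the contrapositive: assume $\nsf_1<\nsf_\tsf$ and exhibit a single colon of generators that is not generated by variables. Ordering the variables so that $y_{\tsf,1}$ precedes $y_{\tsf,\nsf_\tsf+1}$ (the two endpoint coordinates of a longest block), I consider
\[
C=(y_{\tsf,1}):y_{\tsf,\nsf_\tsf+1},\qquad a_0=\left\lceil \nsf_\tsf/\nsf_1\right\rceil\ge 2 .
\]
Using $y_{\tsf,1}=x^{\nsf_\tsf}s_\tsf$, $\ y_{\tsf,\nsf_\tsf+1}=y^{\nsf_\tsf}s_\tsf$ and $y_{1,1}=x^{\nsf_1}s_1$, there is an explicit factorization
\[
y_{1,1}^{a_0}\,y_{\tsf,\nsf_\tsf+1}
= x^{\,a_0\nsf_1-\nsf_\tsf}\,y^{\nsf_\tsf}s_1^{a_0}\cdot x^{\nsf_\tsf}s_\tsf
\in (y_{\tsf,1}),
\]
where the cofactor is a genuine element of $\Lambda$ because $a_0\nsf_1\ge \nsf_\tsf$; thus $y_{1,1}^{a_0}\in C$, while $y_{1,1}^{a_0}=x^{a_0\nsf_1}s_1^{a_0}\notin(y_{\tsf,1})$ since every monomial of $(y_{\tsf,1})$ involves $s_\tsf$. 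The decisive point is that $y_{1,1}^{a_0}$ is a \emph{minimal} generator of $C$ of degree $a_0\ge 2$: its only monomial divisors are the powers $y_{1,1}^{b}$ with $b\le a_0$, and $y_{1,1}^{b}\,y_{\tsf,\nsf_\tsf+1}$ has $x$-degree $b\nsf_1$, which reaches $\nsf_\tsf$ (hence lands in $(y_{\tsf,1})$) only when $b\ge a_0$. Therefore no proper divisor of $y_{1,1}^{a_0}$ lies in $C$, so $C$ is not generated by a subset of the variables, $R$ is not strongly Koszul, and by the equivalence $\reg_R R/(Y)\neq 0$ for some $Y$.

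The sufficiency is essentially bookkeeping driven by the two witness inequalities, so the substantive part is the necessity, where the difficulty is to locate a transparent colon exhibiting the failure rather than to hunt blindly for a bad linear section $Y$. The regularity/strong-Koszul equivalence reduces that hunt to a single colon, and the $x$-degree estimate makes the argument elementary; notably it uses only the strict inequality $\nsf_1<\nsf_\tsf$ and the presence of one shortest block, so it is insensitive to how many longest blocks occur (which is exactly where a naive syzygy computation would otherwise become delicate).
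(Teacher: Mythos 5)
Your proof is correct and follows the same overall route as the paper: both reduce the statement to strong Koszulness of $k[\Lambda]$ via the equivalence proved just before it, both can handle sufficiency through the Segre--Veronese description of $R(\nsf_1,\ldots,\nsf_1)$ (your direct colon computation is a workable, if more laborious, substitute, modulo the off-by-one in your formula for $\ell$), and both prove necessity by exhibiting a colon of two natural coordinates with a minimal generator of degree at least $2$. The one substantive difference is \emph{which} colon is used, and here your choice is the one that actually works. The paper colons inside the \emph{short} block, taking $(x^a s_1):y^a s_1$ with $a=\nsf_1<b=\nsf_{\tsf}$ and offering $x^b y^{ra-b}s_2^{r}$ as the offending minimal generator; but that monomial has $s_2$-degree $r$ while its $x$-plus-$y$-degree is $ra\ne rb$, so it does not even lie in $\Lambda$, and in fact $(x^a s_1):y^a s_1$ \emph{is} generated by variables (one checks this already for the $(1,2)$ scroll, where it equals $(xs_1,x^2s_2,xys_2)$). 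Your colon $(y_{\tsf,1}):y_{\tsf,\nsf_{\tsf}+1}$ inside the \emph{long} block, with witness $y_{1,1}^{\lceil \nsf_{\tsf}/\nsf_1\rceil}$, is exactly the right configuration: the witness lies in the colon, its only divisors in $\Lambda$ are lower powers of $y_{1,1}$, and none of those lies in the colon. So your necessity argument is in effect a corrected form of the paper's (the roles of the two blocks appear to have been interchanged in the printed proof), and it has the additional merit of working directly in the full scroll without first specializing to $\tsf=2$.
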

\begin{proof}
We prove that $R$ is strongly Koszul if and only if $\nsf_1=\nsf_2=\cdots=\nsf_{\tsf}$. The ``if" direction is clear: if $\nsf_1=\nsf_2=\cdots=\nsf_{\tsf}$ then $R$ is the Segre product of $k[s_1,\ldots,s_{\tsf}]$ and the $\nsf_1$th Veronese of $k[x,y]$. Therefore $R$ is strongly Koszul by \cite[Proposition 2.3]{HHR}.

The ``only if" direction: assume that the contrary is true, for example $\nsf_{\tsf}>\nsf_1$. Since $R$ is strongly Koszul, moding out the variables of the blocks of lengths $\tsf_2,\ldots,\tsf_{\nsf-1}$, we see that the scroll of type $(\nsf_1,\nsf_{\tsf})$ is also strongly Koszul. We will deduce a contradiction. For simplicity we can assume that $\tsf=2$.

Denote $a=\nsf_1,b=\nsf_2$. Let $r=\lceil b/a\rceil$. The ring $R$ is also an affine monoid ring, $R\cong k[x^as_1,x^{a-1}ys_1,\ldots,y^as_1,x^bs_2,x^{b-1}ys_2,\ldots,y^bs_2] \subseteq k[x,y,s_1,s_2]$ where $x,y,s_1,s_2$ are variables. From \cite[Proposition 1.4]{HHR}, the ideal $(x^as_1):y^as_1$ is generated by a subset of 
\[
\{x^as_1,x^{a-1}ys_1, \ldots,y^as_1,x^bs_2,\ldots,y^bs_2\}.
\] 
However, $x^by^{ra-b}s_2^r$ is clearly a minimal generator of $(x^as_1):y^as_1$ and it does not belong to the above-mentioned set, since it has degree $r\ge 2$ in $R$. This is a contradiction.
\end{proof}

\begin{defn}
Let $R$ be a standard graded $k$-algebra with graded maximal ideal $\mm$. Let $R=S/I$ be a presentation of $R$ where $S=k[\xsf_1,\ldots,\xsf_n]$ be a standard graded polynomial ring and $I$ a homogeneous ideal of $S$. The algebra $R$ is a called
{\em linearly Koszul} (with respect to the sequence $\overline{\xsf_1},\ldots,\overline{\xsf_n}$) if $R/(Y)$ is a Koszul algebra for every subsequence $Y$ of $\boldsymbol{\xsf}=\overline{\xsf_1},\ldots,\overline{\xsf_n}$.

We say that $R$ {\em satisfies the regularity condition} if $\reg R/(Y) \le \reg R$ for every subsequence $Y$ of $\boldsymbol{\xsf}$, where $\reg$ denotes the absolute Castelnuovo-Mumford regularity.
\end{defn}
\begin{rem}
(i) Any algebra defined by quadratic monomial relations is Koszul by the result of Fr\"oberg \cite{Fr1}, and consequently it is also linearly Koszul.

(ii) If $R$ is linearly Koszul then so is quotient ring $R/(Y)$ for every subsequence $Y$ of $\boldsymbol{\xsf}$.

(iii) If $R$ is strongly Koszul with respect to the sequence $\boldsymbol{\xsf}$ then it is also linearly Koszul. The reverse implication is not true, even if $R$ is defined by all monomial relations except one binomial relation. For example, let $R$ be the determinantal ring of the following matrix
\[
\left(\begin{matrix} x   & 0 & z\\
                     y   & z & t
                         \end{matrix}\right)
\]
Concretely $R=k[x,y,z,t]/(xz,z^2,xt-yz)$. Then $y$ is an $R$-regular element and $R/(y)\cong k[x,z,t]/(xz,z^2,xt)$ is Koszul, so $R$ is also Koszul, e.g.~ by Lemma \ref{lem_regularity}. It is also easy to check that each of the quotient rings $R/(x),R/(z),R/(t)$ is a Koszul algebra defined by monomial relations. Therefore $R$ is linearly Koszul. On the other hand $0:t=(x^2)$, hence $R$ is not strongly Koszul (with respect to the natural coordinates).
\end{rem}
Note that if $R$ is a rational normal scroll, $\reg R=1$. In this case, we have: 
\begin{lem}
If $\reg(R)=1$ and $R$ satisfies the regularity condition then $R$ is linearly Koszul.
\end{lem}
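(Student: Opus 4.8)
The plan is to deduce the Koszulness of each linear section $R/(Y)$ directly from Lemma \ref{lem_regularity}(ii), taking the polynomial ring $S$---rather than $R$ itself---as the base ring. First I would record that every such quotient is again a standard graded quotient of $S$: writing $R = S/I$ and $Y \subseteq \{\overline{\xsf_1}, \ldots, \overline{\xsf_n}\}$, the ring $R/(Y)$ equals $S/(I + \mathfrak a)$, where $\mathfrak a$ is generated by the variables of $S$ lying over $Y$. Hence the composite $S \twoheadrightarrow R \twoheadrightarrow R/(Y)$ is a surjection of standard graded $k$-algebras, and the absolute regularity $\reg R/(Y) = \reg_S R/(Y)$ appearing in the regularity condition is exactly the one relevant to Lemma \ref{lem_regularity}.

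Next, since $R$ satisfies the regularity condition and $\reg R = \reg_S R = 1$, I have $\reg_S R/(Y) \le \reg_S R = 1$ for every $Y$. This is precisely the hypothesis needed to apply Lemma \ref{lem_regularity}(ii) to the surjection $S \to R/(Y)$ with the module $M = k$, which yields $\reg_{R/(Y)} k \le \reg_S k$. Since the minimal free resolution of $k$ over the polynomial ring $S$ is the (linear) Koszul complex, $\reg_S k = 0$; therefore $\reg_{R/(Y)} k = 0$, that is, $R/(Y)$ is Koszul. As $Y$ ranges over all subsequences of the variables, this shows that $R$ is linearly Koszul.

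There is no substantial obstacle here: the entire content is the observation that the regularity condition supplies exactly the bound $\reg_S R/(Y) \le 1$ needed to take $S$ as the base ring in Lemma \ref{lem_regularity}(ii), combined with the standard fact $\reg_S k = 0$. The one point requiring care is to use $S$, and not $R$, as the base ring---routing the argument through $R$ would instead demand $\reg_R R/(Y) \le 1$, which the hypothesis does not directly provide.
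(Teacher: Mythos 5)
Your proof is correct and follows essentially the same route as the paper: both rest on Lemma \ref{lem_regularity}(ii) together with the bound $\reg_S R/(Y)\le \reg_S R=1$ supplied by the regularity condition. The only cosmetic difference is that you apply the lemma once, directly to the surjection $S\to R/(Y)$, whereas the paper factors through $R$ (first deriving $\reg_R R/(Y)\le \reg_S R/(Y)\le 1$ and then applying the lemma to $R\to R/(Y)$) --- so your closing remark that the $R$-route is unavailable is not quite right, though your own argument is the more streamlined of the two.
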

\begin{proof}
Take any standard graded polynomial ring $S$ which surjects onto $R$. From $\reg_S R=1$, we get $\reg_R k \le \reg_S k=0$ by Lemma \ref{lem_regularity}.

Denote by $\boldsymbol{\xsf}$ the sequence of natural coordinates of $R$. For every subsequence $Y$ of $\boldsymbol{\xsf}$, we have $\reg_R R/(Y) \le \reg R/(Y)\le 1$. By Lemma \ref{lem_regularity}, this implies $\reg_{R/(Y)}k \le \reg_R k=0$. Hence $R/(Y)$ is Koszul.
\end{proof}

We are ready for Theorem \ref{linearly_Koszul}(i) which characterizes balanced scrolls in terms of the regularity condition. That this could be true was predicted by Conca \cite{Con1}.
\begin{thm}
\label{reg_Koszul}
A rational normal scroll satisfies the regularity condition if and only if it is balanced.
\end{thm}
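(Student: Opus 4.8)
The plan is to use Theorem \ref{regularity_of_generalized_scroll} as the computational engine on both directions. Since $\reg R = 1$ for a rational normal scroll, the regularity condition is exactly the requirement that $\reg R/(Y) \le 1$ for every subset $Y$ of natural coordinates. For a fixed $Y$, I would write $R/(Y) = k[X']/I_2(X')$ as in Proposition \ref{KW_form}, and let $m'$ (resp.\ $n'$) denote the longest nilpotent (resp.\ shortest scroll) block length of $X'$. As passing modulo a set of variables does not change the absolute regularity, $\reg R/(Y)$ is the regularity of the determinantal ring $k[X']/I_2(X')$ computed from $X'$. By Theorem \ref{regularity_of_generalized_scroll} this regularity equals $1$ unless $m' \ge 2$ and $n' \ge 1$, in which case it is $\lceil (m'-1)/n'\rceil$; the latter is $\le 1$ precisely when $m' \le n'+1$. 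Thus everything reduces to comparing $m'$ with $n'$.

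For the ``if'' direction, assume $R$ is balanced, i.e.\ $\nsf_{\tsf} \le \nsf_1 + 1$. First I would bound $m' \le \nsf_{\tsf}$ using Lemma \ref{KW_quotient}. Next I would bound $n' \ge \nsf_1$: if $X'$ has a scroll block of length $n'$, then Proposition \ref{KW_form} forces $X$ (which defines the scroll) to have a scroll block of length at most $n'$, and since the shortest scroll block of $X$ has length $\nsf_1$, this gives $\nsf_1 \le n'$. Combining the two bounds yields $m' \le \nsf_{\tsf} \le \nsf_1 + 1 \le n' + 1$, so $\reg R/(Y) \le 1$ whenever the relevant blocks occur, and trivially otherwise (when there is no nilpotent block, or $I_2(X') = 0$). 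Hence the regularity condition holds.

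For the ``only if'' direction I would argue by contraposition. Suppose $R$ is not balanced, so $\nsf_{\tsf} \ge \nsf_1 + 2$; in particular $\tsf \ge 2$, so scroll blocks other than the longest one survive. I would choose $Y = \{y_{\tsf,1}, y_{\tsf,\nsf_{\tsf}+1}\}$, the two endpoint variables of the longest scroll block. Setting these to zero turns that block into
\[
\left(\begin{matrix} 0 & y_{\tsf,2} & \cdots & y_{\tsf,\nsf_{\tsf}} \\ y_{\tsf,2} & y_{\tsf,3} & \cdots & 0 \end{matrix}\right),
\]
which is exactly a nilpotent block of length $\nsf_{\tsf}$, while the blocks of lengths $\nsf_1, \ldots, \nsf_{\tsf-1}$ are untouched. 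The resulting matrix is already a Kronecker--Weierstrass normal form with $m' = \nsf_{\tsf}$ and $n' = \nsf_1$, so Theorem \ref{regularity_of_generalized_scroll} gives $\reg R/(Y) = \lceil (\nsf_{\tsf}-1)/\nsf_1\rceil \ge 2 > \reg R$, violating the regularity condition.

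The block bookkeeping is routine; the step deserving care is the pair of inequalities $m' \le \nsf_{\tsf}$ and $n' \ge \nsf_1$ in the ``if'' direction, which is precisely where Lemma \ref{KW_quotient} and Proposition \ref{KW_form} carry the weight. I would also make sure to dispose of the degenerate cases (no nilpotent block, no scroll block, or $I_2(X')=0$), where the regularity is automatically at most $1$, and to record that setting the two endpoint variables of a length-$\nsf_{\tsf}$ scroll block to zero genuinely produces a nilpotent block of the same length.
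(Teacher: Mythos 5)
Your proposal is correct and follows essentially the same route as the paper: the ``if'' direction combines Lemma \ref{KW_quotient} (nilpotent blocks of $X'$ have length $\le \nsf_{\tsf} \le \nsf_1+1$) with Proposition \ref{KW_form} (scroll blocks of $X'$ have length $\ge \nsf_1$) and then applies Theorem \ref{regularity_of_generalized_scroll}, while the ``only if'' direction kills the two endpoint coordinates of the longest scroll block to produce a nilpotent block of length $\nsf_{\tsf}$ and regularity $\lceil (\nsf_{\tsf}-1)/\nsf_1\rceil \ge 2$. This is exactly the paper's argument, with the degenerate cases handled slightly more explicitly.
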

\begin{proof}
Assume that the scroll is balanced $R=R(\nsf_1,\ldots,\nsf_1,\nsf_1+1,\ldots,\nsf_1+1)$. For every set of variables $Y$, the quotient ring $R/(Y)$ is the determinantal ring of a $2\times e$ matrix $X$ of linear forms, which can assumed to be in Kronecker-Weierstrass form. By Proposition \ref{KW_form}, the length of any scroll block of $X$ (if exists) is at least $\nsf_1$. By Lemma \ref{KW_quotient}, each nilpotent block of $X$ has length at most $\nsf_1+1$. Therefore $\reg R/(Y) \le 1$ by Theorem \ref{regularity_of_generalized_scroll}, as desired.

The necessary condition is immediate from Theorem \ref{regularity_of_generalized_scroll}. In our case,
\[
\reg R(\nsf_1,\ldots,\nsf_{\tsf})/(x_{\tsf,1},x_{\tsf,\nsf_{\tsf}+1})=\left \lceil \frac{\nsf_{\tsf}-1}{\nsf_1} \right \rceil  \ge 2
\]
if $\nsf_{\tsf}\ge \nsf_1+2$.
\end{proof}
Now we prove Theorem \ref{linearly_Koszul}(ii) which characterizes linearly Koszul scrolls.
\begin{thm} The scroll $R=R(\nsf_1,\ldots,\nsf_{\tsf})$ is linearly Koszul if and only if $\nsf_{\tsf} \le 2\nsf_1$.
\end{thm}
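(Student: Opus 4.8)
The plan is to deduce everything from Theorem~\ref{main}. The point is that for any set $Y$ of natural coordinates, the quotient $R/(Y)$ is again the determinantal ring $k[X']/I_2(X')$ of a $2\times e'$ matrix of linear forms (Proposition~\ref{KW_form}); hence by Theorem~\ref{main}, $R/(Y)$ is Koszul if and only if the longest nilpotent block $m'$ and shortest scroll block $n'$ of a Kronecker-Weierstrass normal form of $X'$ satisfy $m'\le 2n'$. Linear Koszulness of $R$ thus reduces to verifying this inequality uniformly in $Y$, and the whole problem becomes one of bounding these two block lengths.

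\textbf{Sufficiency.} Assume $\nsf_{\tsf}\le 2\nsf_1$ and fix $Y$; write $R/(Y)=k[X']/I_2(X')$ with $X'$ in Kronecker-Weierstrass normal form. By Lemma~\ref{KW_quotient} every nilpotent block of $X'$ has length at most $\nsf_{\tsf}$, so $m'\le \nsf_{\tsf}$. On the other hand, every block of the matrix defining $R=R(\nsf_1,\ldots,\nsf_{\tsf})$ is a scroll block of length $\ge \nsf_1$; so if $X'$ has a scroll block of length $s$, Proposition~\ref{KW_form} forces $R$ to have a scroll block of length at most $s$, whence $s\ge \nsf_1$ and therefore $n'\ge \nsf_1$. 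Combining, $m'\le \nsf_{\tsf}\le 2\nsf_1\le 2n'$, and Theorem~\ref{main} gives that $R/(Y)$ is Koszul. The degenerate cases (where $X'$ has no nilpotent block or no scroll block) are covered by the parenthetical clause of Theorem~\ref{main}. As $Y$ was arbitrary, $R$ is linearly Koszul.

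\textbf{Necessity.} Suppose $\nsf_{\tsf}\ge 2\nsf_1+1$; I will exhibit $Y$ with $R/(Y)$ not Koszul. First I use that linear Koszulness descends to coordinate quotients: killing all natural coordinates of the intermediate blocks $\nsf_2,\ldots,\nsf_{\tsf-1}$ makes those scroll blocks vanish, so the resulting quotient is exactly the two-block scroll $R(\nsf_1,\nsf_{\tsf})$. Thus, were $R$ linearly Koszul, so would be $R(\nsf_1,\nsf_{\tsf})=R(\nsf_{\tsf},\nsf_1)$. But with $m=\nsf_{\tsf}$ and $n=\nsf_1$ we have $m\ge 2n+1$, so Theorem~\ref{non-Koszul} shows that the further quotient by the two extreme variables $x_1,x_{m+1}$ of the length-$m$ block is not Koszul---a contradiction. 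Hence $\nsf_{\tsf}\le 2\nsf_1$ is necessary.

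\textbf{Main obstacle.} Granting the earlier results, the argument is essentially bookkeeping, and the two nontrivial inputs are already established: in the sufficiency step the delicate point is the bound $n'\ge \nsf_1$, which rests on the block-pencil analysis of Proposition~\ref{KW_form} (via Lemma~\ref{lem_scroll}), while in the necessity step the hard work is the homology computation of Theorem~\ref{non-Koszul} producing a nonvanishing third Betti number of $k$. The step I would be most careful about in assembling the proof is verifying that modding out the intermediate blocks really returns the scroll $R(\nsf_1,\nsf_{\tsf})$ rather than some degenerate variant---i.e., that the cross minors between the first and last blocks survive and that the vanished columns contribute nothing to $I_2$---so that Theorem~\ref{non-Koszul} applies verbatim.
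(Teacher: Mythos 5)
Your proof is correct and takes essentially the same route as the paper: sufficiency by combining Proposition~\ref{KW_form} (giving $n'\ge \nsf_1$) with Lemma~\ref{KW_quotient} (giving $m'\le \nsf_{\tsf}$) to verify the length condition for every coordinate section and then applying Theorem~\ref{main}, and necessity by specializing to the two-block scroll $R(\nsf_1,\nsf_{\tsf})$ and invoking Theorem~\ref{non-Koszul}. Your write-up of the necessity step is merely a more explicit version of the paper's one-line citation of Theorem~\ref{non-Koszul}, and the point you flag (that killing all natural coordinates of the intermediate blocks just deletes those columns, since the blocks are in disjoint variables) is indeed the only bookkeeping to check.
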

\begin{proof} For the sufficient condition: assume that $\nsf_{\tsf}\le 2\nsf_1$. Take any set of natural coordinates $Y$. Let $X$ be the matrix of linear forms defining $R/(Y)$. From Proposition \ref{KW_form} and Lemma \ref{KW_quotient}, any canonical form of $X$ satisfies the length condition. By Theorem \ref{Koszul_filtration}, we conclude that $R/(Y)$ is Koszul. 

The necessary condition follows from Theorem \ref{non-Koszul}.
\end{proof}

Next we consider the following class of linearly Koszul algebras, first introduced in \cite{Con1} under a different name.
\begin{defn}
Let $R$ be a standard graded $k$-algebra. We say that $R$ is {\em universally linearly Koszul} (abbreviated ul-Koszul) if $R/(Y)$ is a Koszul ring for every set of linear forms $Y$.
\end{defn}
\begin{rem}
We know that every Koszul algebra defined by quadratic monomial relations are linearly Koszul. However a Koszul algebra defined by quadratic monomial relations need not be universally linearly Koszul. Indeed, let 
\[
R=k[x,y,z,t,u,v]/(x^2,xy,y^2,xz,yt,uv) 
\]
and $I=(x+y-u,z-t-v)$. Then $R/I\cong k[x,y,z,t]/(x^2,xy,y^2,xz,yt,xt-yz)$ is not Koszul: it is defined by the matrix
\[
\left(\begin{matrix}0 & x   & y & z\\
                    x & y   & 0 & t
                         \end{matrix}\right)
\]
and by Theorem \ref{non-Koszul}, $R/I$ is not Koszul.
\end{rem}
In \cite{Con2}, the author defines $R$ to be universally Koszul if $\reg_R R/(Y)=0$ for every sequence of linear forms $Y$. Clearly every universally Koszul algebra is ul-Koszul. In the same paper, the universally Koszul rational normal scrolls of type $(\nsf_1,\ldots,\nsf_{\tsf})$ are completely classified: either $\tsf=1$ (a rational normal curve) or $\tsf=2$ and $\nsf_1=\nsf_2$. Using the classification of the Kronecker-Weierstrass normal forms of linear sections of rational normal scrolls in Section \ref{background}, we prove:
\begin{thm}
\label{ul_Koszul_scrolls}
The rational normal scroll $R(\nsf_1,\ldots,\nsf_{\tsf})$ is ul-Koszul if and only if either $\tsf=1$, or $\tsf= 2$ and $\nsf_2\le 2\nsf_1$, or $\tsf=3$ and $\nsf_1=\nsf_2=\nsf_3$.
\end{thm}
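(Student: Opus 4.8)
The plan is to recast the stated trichotomy as the single inequality $E \le 3\nsf_1$, where $E := \nsf_1 + \cdots + \nsf_{\tsf}$ is the number of columns of the defining matrix $X$ of $R$. Since $\nsf_1 \le \cdots \le \nsf_{\tsf}$, one checks at once that $\nsf_1 + \cdots + \nsf_{\tsf} \le 3\nsf_1$ holds precisely when $\tsf = 1$, or $\tsf = 2$ and $\nsf_2 \le 2\nsf_1$, or $\tsf = 3$ and $\nsf_1 = \nsf_2 = \nsf_3$; so it suffices to prove that $R$ is ul-Koszul if and only if $E \le 3\nsf_1$. Both implications will be read off from Theorem \ref{main} via two facts about a Kronecker-Weierstrass normal form of the matrix $X'$ defining an arbitrary linear section $R/(Y)$: that every scroll block of $X'$ has length at least $\nsf_1$ (Proposition \ref{KW_form}, because every scroll block of $X$ has length $\ge \nsf_1$), and that the total length of all nilpotent, scroll and Jordan blocks of $X'$ is at most $E$ (this total length equals the number of columns of $X'$, which is at most $E$ because $X'$ is obtained from the $2\times E$ matrix $X$ by substituting linear forms for variables).

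For sufficiency, suppose $E \le 3\nsf_1$ and let $Y$ be any set of linear forms. Write $R/(Y) = k[X']/I_2(X')$ as in Proposition \ref{KW_form}. If $I_2(X') = 0$ then $R/(Y)$ is a polynomial ring and is Koszul. Otherwise pass to a Kronecker-Weierstrass normal form of $X'$; if $X'$ has no nilpotent block or no scroll block then the length condition of Theorem \ref{main} holds vacuously and $R/(Y)$ is Koszul. In the remaining case let $m$ be the longest nilpotent length and $n$ the shortest scroll length. A nilpotent block and a scroll block are distinct blocks of $X'$, so the total-length bound gives $m + n \le E$, whence $m \le E - n \le E - \nsf_1 \le 2\nsf_1 \le 2n$, using $n \ge \nsf_1$ and $E \le 3\nsf_1$. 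Thus $m \le 2n$, and Theorem \ref{main} yields that $R/(Y)$ is Koszul. As $Y$ was arbitrary, $R$ is ul-Koszul.

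For necessity, suppose $E > 3\nsf_1$; then $\tsf \ge 2$ and $\nsf_2 + \cdots + \nsf_{\tsf} = E - \nsf_1 > 2\nsf_1$. The plan is to exhibit a single bad linear section. First glue the last $\tsf - 1$ scroll blocks of $X$ into one scroll block of length $\nsf_2 + \cdots + \nsf_{\tsf}$: identifying the last variable of each block with the first variable of the next, that is, modding out the differences of the corresponding natural coordinates, concatenates consecutive scroll blocks into a single scroll block of the summed length (for $\tsf = 2$ no gluing is needed). Then mod out the two extreme natural coordinates of this long block, namely its first top entry and its last bottom entry, which turns it into a nilpotent block of the same length. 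Calling $Y$ the resulting set of linear forms, the matrix $X'$ defining $R/(Y)$ is the concatenation of the untouched first scroll block, of length $\nsf_1$, with a nilpotent block of length $E - \nsf_1$, and this is already a Kronecker-Weierstrass normal form. Since $E - \nsf_1 > 2\nsf_1$, Theorem \ref{main} shows $R/(Y)$ is not Koszul, so $R$ is not ul-Koszul.

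The steps needing care, rather than a genuine obstacle, are the two structural facts invoked at the outset: the lower bound $\nsf_1$ on scroll lengths is immediate from Proposition \ref{KW_form}, while the total-length bound $\le E$ should be spelled out by noting that substitution keeps $X'$ a $2\times E$ matrix and that the blocks of its normal form account for its columns. It is worth emphasizing how this sharpens the linearly-Koszul criterion of Theorem \ref{linearly_Koszul}(ii): for natural coordinates Lemma \ref{KW_quotient} bounds nilpotent lengths by $\nsf_{\tsf}$, but general linear forms additionally permit gluing distinct scroll blocks before making them nilpotent, which raises the relevant bound to $E - \nsf_1$ and is exactly what forces the stronger condition $E \le 3\nsf_1$.
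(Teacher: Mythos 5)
Your proof is correct and follows essentially the same route as the paper: necessity by gluing the last $\tsf-1$ scroll blocks into one long block via binomial linear forms and then killing its two extreme coordinates to produce a scroll block of length $\nsf_1$ next to a nilpotent block of length $E-\nsf_1>2\nsf_1$ (the paper phrases this as reducing to $R(\nsf_1,\nsf_2+\cdots+\nsf_{\tsf})$ and citing Theorem \ref{non-Koszul}), and sufficiency by combining Theorem \ref{main} with the scroll-length lower bound from Proposition \ref{KW_form}. Your explicit reformulation as $E\le 3\nsf_1$ and the column-count bound $m+n\le E$ are exactly the details the paper leaves implicit in its one-line verification of the length condition.
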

\begin{proof}
If the necessary condition is not true, then $\nsf_2+\cdots+\nsf_{\tsf}\ge 2\nsf_1+1$. Moding out a suitable sequence of binomial linear forms $Y$, we arrive at the ring $R(\nsf_1,\nsf_2+\cdots+\nsf_{\tsf})$. By Theorem \ref{non-Koszul} we get $R/(Y)$ is not linearly Koszul. Hence $R$ is not ul-Koszul.

The converse follows from Theorem \ref{main} and Proposition \ref{KW_form}: for any quotient ring by a linear ideal of $R$, any of its corresponding Kronecker-Weierstrass matrices satisfies the length condition.
\end{proof}
Conca \cite{Con3} discovered the classification of universally Koszul algebras defined by monomial relations. It would be interesting to classify all universally linearly Koszul algebras defined by monomial relations. 

Finally, similarly to Theorem \ref{ul_Koszul_scrolls}, we can classify scrolls that satisfy the ``universal" version of the regularity condition.
\begin{thm}
\label{univer_reg_scrolls}
The rational normal scroll $R=R(\nsf_1,\ldots,\nsf_{\tsf})$ has the property that $\reg R/(Y) \le \reg R$ for any set of linear forms $Y$ if and only if $\tsf\le 1$, or $\tsf=2$ and $\nsf_2\le \nsf_1+1$, or $\tsf=3$ and $\nsf_1=\nsf_2=\nsf_3=1$.
\end{thm}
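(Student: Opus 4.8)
The plan is to first collapse the three listed cases into a single numerical inequality. Since a rational normal scroll has $\reg R = 1$, the asserted property ``$\reg R/(Y) \le \reg R$ for all linear $Y$'' is just $\reg R/(Y) \le 1$ for all $Y$. Writing $N = \nsf_2 + \cdots + \nsf_{\tsf}$ and using $1 \le \nsf_1 \le \cdots \le \nsf_{\tsf}$, a short check shows that the trichotomy ``$\tsf \le 1$, or $\tsf = 2$ and $\nsf_2 \le \nsf_1 + 1$, or $\tsf = 3$ and $\nsf_1 = \nsf_2 = \nsf_3 = 1$'' is equivalent to the single inequality $N \le \nsf_1 + 1$: for $\tsf = 3$ one has $N = \nsf_2 + \nsf_3 \ge 2\nsf_1$, so $N \le \nsf_1 + 1$ forces $\nsf_1 = 1$ and then $\nsf_2 = \nsf_3 = 1$; for $\tsf \ge 4$ one has $N \ge 3\nsf_1 > \nsf_1 + 1$, so the inequality always fails. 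Thus I would prove the cleaner statement that the universal regularity condition holds if and only if $N \le \nsf_1 + 1$.

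For the necessity, suppose $N \ge \nsf_1 + 2$. Following the merging trick in the proof of Theorem \ref{ul_Koszul_scrolls}, I would mod out the binomial linear forms $y_{i,\nsf_i+1} - y_{i+1,1}$ for $i = 2, \ldots, \tsf - 1$, which identify the last variable of the $i$-th scroll block with the first variable of the next one and fuse the blocks $2, \ldots, \tsf$ into a single scroll block of length $N$; the resulting section is $R(\nsf_1, N)$. Moding out the first and last variables of this length-$N$ block then turns it into a nilpotent block of length $N$, leaving a scroll block of length $\nsf_1$. This matrix is already in Kronecker-Weierstrass normal form with longest nilpotent block $m = N \ge 2$ and shortest scroll block $\nsf_1$, so Theorem \ref{regularity_of_generalized_scroll} gives $\reg = \lceil (N-1)/\nsf_1\rceil \ge 2 > \reg R$ (this is exactly the computation in the proof of Theorem \ref{reg_Koszul}). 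Hence the regularity condition fails.

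For the sufficiency, assume $N \le \nsf_1 + 1$, so $\sum_i \nsf_i = \nsf_1 + N \le 2\nsf_1 + 1$. Fix an arbitrary set $Y$ of linear forms; by Proposition \ref{KW_form} the quotient $R/(Y)$ is the determinantal ring of a $2 \times e'$ matrix $X'$, which I put in Kronecker-Weierstrass normal form and whose longest nilpotent and shortest scroll blocks I denote $m'$ and $n'$. If $I_2(X') = 0$, or $m' \le 1$, or $X'$ has no scroll block, then $\reg R/(Y) \le 1$ directly from Theorem \ref{regularity_of_generalized_scroll}. Otherwise both a nilpotent block of length $m' \ge 2$ and a scroll block are present, and two bounds apply: Proposition \ref{KW_form} gives $n' \ge \nsf_1$, while the sum of the lengths of the blocks of $X'$ is at most the number of columns of $X'$, which cannot exceed the number of columns $\sum_i \nsf_i$ of $X$, since moding out linear forms is a substitution that creates no new columns. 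In particular $m' + n' \le \sum_i \nsf_i \le 2\nsf_1 + 1$, whence $m' \le 2\nsf_1 + 1 - n' \le \nsf_1 + 1 \le n' + 1$. Thus $m' - 1 \le n'$, and Theorem \ref{regularity_of_generalized_scroll} yields $\reg R/(Y) = \lceil (m'-1)/n'\rceil = 1 = \reg R$.

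The main obstacle is the control of the longest nilpotent block $m'$ of an arbitrary, not merely coordinate, linear section: unlike Lemma \ref{KW_quotient}, there is no a priori bound on nilpotent block lengths for general linear forms. The observation that resolves this is that the total length of all Kronecker-Weierstrass blocks of $X'$ is bounded by the number of columns of $X'$ (equivalently, by the number of rows of its matrix pencil), which is preserved, and only possibly decreased, under passing to a linear section; combined with the lower bound $n' \ge \nsf_1$ from Proposition \ref{KW_form} this pins $m'$ below $\nsf_1 + 1$. I would make sure this column-count bookkeeping is stated carefully, since it is the one genuinely new ingredient beyond the tools already used for Theorems \ref{reg_Koszul} and \ref{ul_Koszul_scrolls}.
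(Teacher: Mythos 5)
Your proof is correct and follows essentially the same route as the paper: necessity by merging the blocks $2,\ldots,\tsf$ via binomial linear forms and specializing the resulting length-$N$ scroll block to a nilpotent block, then invoking Theorem \ref{regularity_of_generalized_scroll}; sufficiency by combining Proposition \ref{KW_form} with that same regularity formula. The one thing you add beyond the paper's (very terse) sufficiency argument is the explicit bookkeeping that the total length of the Kronecker--Weierstrass blocks of $X'$ is bounded by its number of columns, hence by $\sum_i \nsf_i$, which correctly pins down the longest nilpotent block $m'$ of an arbitrary linear section; this is a legitimate and welcome filling-in of a detail the paper leaves implicit.
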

\begin{proof}
If the necessary condition is not true, then $\nsf_2+\cdots+\nsf_{\tsf}\ge \nsf_1+2$. Moding out suitable linear forms, we arrive at the determinantal of a scroll block of length $\nsf_1$ and a nilpotent block of length $\nsf_2+\cdots+\nsf_{\tsf}$. The regularity of that ring is at least $2$ by Theorem \ref{regularity_of_generalized_scroll}. This is a contradiction.

For the sufficient condition: one only has to use Proposition \ref{KW_form} and Theorem \ref{regularity_of_generalized_scroll}.
\end{proof}
\section*{Acknowledgments}
We are grateful to Aldo Conca for his suggestion of the problems and stimulating discussions. We would like to thank the referee for several useful advice that helped us to correct errors from the previous version and streamline the presentation.

\end{document}